\numberwithin{equation}{section}
\newtheorem{theorem}{Theorem}[section]
\newtheorem{lemma}[theorem]{Lemma}
\newtheorem{proposition}[theorem]{Proposition}
\newtheorem{assumption}[theorem]{Assumption}
\newtheorem{corollary}[theorem]{Corollary}
\newtheorem{definition}[theorem]{Definition}
\newtheorem{main result}{Main Result}
\newtheorem{remark}[theorem]{Remark}
\newcommand\adda[1]{{\color{blue} #1}}
\newcommand\coma[1]{{\color{red} {#1}}}
\newcommand\dela[1]{}
\def\l{\left}
\def\r{\right}
\def\p{\prime}
\author[S. Gokhale]{Soham Gokhale}
\address{School of Mathematics\\ Indian Institute of Science Education and Research Thiruvananthapuram\\ Trivandrum 695551, INDIA}	\email{gokhalesoham16@iisertvm.ac.in}
\author[U.~Manna]{Utpal Manna}
\address{School of Mathematics\\ Indian Institute of Science Education and Research Thiruvananthapuram\\ Trivandrum 695551, INDIA}
\email{manna.utpal@iisertvm.ac.in}
\subjclass{60H}
\keywords{Landau Lifshitz Bloch equation, Young measures, optimal control, ferromagnetism}
\title{Optimal Control of The Stochastic Landau Lifshitz Bloch Equation}
\begin{document}
	\maketitle

	%\tableofcontents
	
	\begin{abstract}
		We consider the stochastic Landau-Lifshitz-Bloch equation in dimensions $1,2,3,$ perturbed by a real-valued Wiener process. We consider a Suslin space-valued control process with a general control operator, which can depend on both the control and the corresponding solution. We reduce the equation to a more general (relaxed) form, where the concept of Young measures is used. We then show the existence of a weak martingale solution to the controlled equation (relaxed). In the second part of the work, we show that for a general lower semicontinuous cost functional, the problem admits a weak relaxed optimal control. This is done using the theory of Young measures. Moreover, pathwise uniqueness is shown (for dimensions $1,2$), which implies the existence of a strong solution.
	\end{abstract}
	
	\section{Introduction}\label{section Introduction}

	Magnetic properties of materials have been a subject of special interest for hundreds of years. Their applications range from sailors using a simple magnetic needle in a compass to very complex imaging and recording devices. Data storage based on the magnetic properties of materials is one of the most used applications. The rise in the use of computers has increased the demand for having large storage, with efficient reading and writing. Generally speaking, a magnetic storage device consists of elements that can be magnetized in two separate directions (states) and hence can store one bit of data each. Therefore, reading and writing data on the said device includes switching the magnetization from one equilibrium state to another. The device's efficiency is, therefore, inherently dependent on the efficiency of the switching mechanism.

	Weiss (see \cite{brown1963micromagnetics} and references therein) initiated the study of the theory of ferromagnetism. It was further developed by Landau and Lifshitz (\cite{Landau+Lifshitz_1935_TTheoryOf_MagneticPermeability_Ferromagnetic}, \cite{landau1992theory}) and Gilbert \cite{Gilbert}. 
	
	For temperatures below the Curie temperature, magnetization can be modeled by the Landau-Lifshitz-Gilbert (LLG) equation. One of the drawbacks of the LLG equation is that it assumes the magnetization length to be a constant, which is unsuitable for higher temperatures, for example, in heat-assisted magnetic recording.
	Garanin (\cite{Garanin_1991_Generalized_EquationOfMotion_Ferromagnet}, \cite{garanin2004thermal}, \cite{garanin1997fokker}) developed a thermodynamically consistent approach, the Landau-Lifshitz-Bloch (LLB) equation for ferromagnetism. 
	The LLB equation is valid for temperatures both below and above the Curie temperature and essentially interpolates between the LLG equation at low temperatures and the Ginzburg-Landau theory for phase transitions.
	For instance, in light-induced demagnetization with powerful fs lasers, the electronic temperature is normally raised higher than the critical temperature. While LLG micromagnetics cannot work under these, micromagnetics based on the LLB equation has been shown to describe the observed fs magnetization dynamics \cite{garanin1997fokker}. For well posedness and more details about the LLB equation, we refer the reader to, for instance, Le \cite{LE_Deterministic_LLBE}, and references therein.
	
	Following the works \cite{ZB+BG+TJ_Weak_3d_SLLGE}, \cite{ZB+BG+Le_SLLBE}, we introduce the noise (and later, as an example, also control, see Remark \ref{remark particular examples}) in the effective field. One of the aims of studying the said models is to understand the transition phenomenon due to thermal fluctuations within the material \cite{brown_1979_ThermalFluctions}.
	Towards this, the noise is added to the energy of the system and hence comes down to the effective field, which is the negative gradient of the total energy. The thermal fluctuations can cause switching of the magnetization states and hence can change the data (in the case of magnetic data storage) without any external force. Hence in order to have more reliable data storage, it seems important to study and control the said switching phenomenon. Therefore, controlling the total energy (and thereby adding the control to the effective field) can help in a better understanding and designing of the model, and corresponding applications.\\
	The resulting stochastic Landau-Lifshitz-Bloch equation is the following. Let $T<\infty$ and let $h : \mathcal{O} \to \mathbb{R}^3$ be a given function. Let $m_0$ denote the initial data. Let $W$ denote a real-valued Wiener process on a given filtered probability space $\l( \Omega , \mathcal{F} , \mathbb{F} , \mathbb{P} \r)$. For a ferromagnetic domain $\mathcal{O}\subset\mathbb{R}^d,d=1,2,3$, the average spin polarization $m$ satisfies the following stochastic partial differential equation.
	\begin{align}
		\nonumber m(t) = & m_0 + \kappa_1 \int_{0}^{t} \l[ \Delta m(s) + \gamma m(s) \times \Delta m(s) - \kappa_2\l( 1 + \l| m(s) \r|_{\mathbb{R}^3}^2 \r) m(s) \r] \, ds  \\
		& + \int_{0}^{t} \gamma \l( m(s) \times h + \kappa_1 h \r)\, \circ dW(s),\ t\in[0,T].
	\end{align}

	Here the stochastic integral is understood in the Stratonovich sense, and hence the symbol "$ \circ dW(s)$". Neumann boundary conditions are assumed for the above problem.
	Regarding the accompanying constants, we have $\kappa_1$ as the damping parameter, $\kappa_2 = \frac{\kappa_1}{\chi_{||}}$, where $\chi_{||}$ denotes the longitudinal susceptibility, and $\gamma$ is the gyromagnetic ratio. For simplicity, we replace all the constants with $1$.
	
	Brze\'zniak and Le \cite{ZB+BG+Le_SLLBE} showed the existence of a martingale solution to the stochastic LLB equation. For dimensions $1,2$, they showed that the solution is pathwise unique, and hence the problem admits a unique, strong solution. They also show that for dimensions $1,2$, the problem admits\\
	invariant measures. On similar lines, Jiang \textit{et. al.} \cite{Jiang+Ju+Wang_MartingaleWeakSolnSLLBE} show the existence of a martingale solution for the stochastic LLB equation. Qiu \textit{et. al.} \cite{Qiu+Tang_Wang_AsymptoticBehaviourSLLBE} discuss the asymptotic behaviour of the said system. They establish large deviations principle and central limit theorem.
	Gokhale and Manna \cite{UM+SG_2022Preprint_SLLBE_LDP} (preprint) consider the stochastic LLB equation, perturbed by jump noise and establish a large deviations principle. The same authors in \cite{UM+SG_2022_SLLBE_WongZakai} show the existence of a solution for the stochastic LLB equation by Wong-Zakai type approximations. 
	%	\coma{More details. Write about your works also.}
	%	Since the study of phase transition is crucial in this context, it seems appropriate to mention the following works. \coma{Mention the recent preprint that cited. Also mention the Wong-Zakai type work.}	
	As previously mentioned, one of the aims of studying the stochastic LLB equation is to study the transition phenomenon. It is desirable, for instance, in writing on magnetic recording devices, to have an external control that can switch the magnetization from one equilibrium state to another with optimal cost. This study might help in the better understanding and design of such a control.\\	
	Let $\mathbb{U}$ denote a metrizable Suslin space. This will be the control set that we consider. Let $X$ denote a separable Hilbert space which is densely and continuously embedded into the space $L^2$. Let $L: L^2 \times \mathbb{U} \to X$ denote the control operator, which depends (possibly non-linearly) on the control as well as the corresponding solution. The following problem is considered.\\
	%	\textbf{Equation Considered (SLLBE):}
	\begin{align}\label{eqn problem considered original}
		\nonumber m(t) = & m_0 + \int_{0}^{t} \Delta m(s) + m(s) \times \Delta m(s) - \l( 1 + \l| m(s) \r|_{\mathbb{R}^3}^2 \r) m(s) \, ds \\
		& + \int_{0}^{t} L\big( m(s) , u(s) \big) \, ds + \int_{0}^{t} \l( m(s) \times h + h \r)\, \circ dW(s).
	\end{align}
	The assumptions on the operator $L$ are given in Assumption \ref{assumption Main Assumption}.\\
	\textbf{Cost functional:} For a control process $u$, with values in $\mathbb{U}$, and a corresponding solution $m$ on a given probability space with the Wiener process $W$, the cost functional $J$ is given by
	\begin{equation}\label{eqn cost functional}
		J\l( m , u \r) : = \mathbb{E}\l[ \int_{0}^{T} F\bigl(t,m\l(t\r) , u\l(t\r) \bigr) \, dt + \Psi\bigl(m\l(T\r)\bigr) \r].
	\end{equation}
	The running cost $F$ depends on both the solution and the control. $\Psi$ is the terminal cost. The assumptions on these are listed in Assumption \ref{assumption assumption on cost functional}. Note that henceforth we refer to $F$ as the cost instead of running cost. It should not be confused with $J$.

	Observe here that we do not assume any special conditions on the control operator $L$ with respect to the control process $u$, and no special convexity assumption is made on the cost functional $F$. With that in mind, we extend the space of admissible controls to a bigger space, called the space of admissible relaxed controls%, such that the control system and the cost are convex in the usual sense
	. This is done by the method of relaxation (Young \cite{Young_1942_GeneralizedSurfaces}, \cite{Young_1969Book_LecturesCalculusofVariations} and Warga \cite{Warga_1962_NecessaryConditionsRelaxedVariationalProblems}, \cite{Warga_1962_RelaxedVariationalProblems}). Ahmed \cite{Ahmed_1983_PropertiesRelaxedTrajectories}, Papageorgiou, \textit{et. al.} (\cite{Avgerinos+Papageorgiou_1990_OptimalControlAndRelaxation}, \cite{Papageorgiou_1989_ExistenceOptimalControl}, \cite{Papageorgiou_1989_RelaxationInfiniteDimensionControlSystem}) use relaxed controls in the evolution equations in Banach spaces, with Polish space valued controls (see also Lou \cite{Lou_2003_ExistenceOptimalControlSemilinearParabolicEqn}, \cite{Lou_2007_ExitenceNonExistenceOptimalControl}, Karoui, Nguyen, and Picque \cite{Karoui+Nguyen+Picque_1987_CompactificationMethodsExistenceOptimalControl}, Gatarek, Sobczyk \cite{Gatarek+Sobczyk_1994_OnExistenceOptimalControlsHilbertSpaceValuedDiffusions}, Haussmann, Lepeltier \cite{Haussmann+Lepeltier_2990_OnExistenceOptimalControl}).

	Regarding the stochastic case, Nagase and Nisio \cite{Nagase_ExistenceOptimalControlSPDE}, \cite{Nagase+Nisio_OptimalControlsSPDE} initiated the study of relaxed control for stochastic partial differential equations. Fleming and Nisio \cite{Fleming+Nisio_1984_OnStochasticRelaxedControl}, \cite{Fleming_1980_MeasureValuedProcessesControl} introduced Young measure technique for finite-dimensional stochastic systems. Sritharan \cite{Sritharan_DeterministicAndStochasticControl_SNSE} studied the optimal relaxed control of both deterministic and stochastic Navier-Stokes equations with linear and non-linear constitutive relations. He used the Minty-Browder technique (along with the martingale problem formulation of Stroock and Varadhan for the stochastic case) to establish the existence of optimal controls. Cutland and Grzesiak \cite{Cutland+Grzesiak_2005_OptimalControl3DSNSE}, \cite{Cutland+Grzesiak_2007_OptimalControl2DSNSE} study the existence of an optimal control for 2D (respectively 3D) Navier-Stokes equations. They use non-standard analysis (see also \cite{Berg+Neves_2007Book_StrengthNonstandardAnalysis}), along with relaxed control techniques. Brze\'zniak and Serrano \cite{ZB+RS} study an optimal relaxed control problem for a class of semilinear stochastic partial differential equations on Banach spaces. They use compactness properties of the class of Young measures on metrizable Suslin spaces (the control sets). Manna and Mukherjee \cite{UM+DM} use the technique of relaxed controls
	% (Young measures)
	, with Suslin space-valued controls, combine them with the martingale problem formulation in the sense of Stroock and Varadhan to establish the existence of optimal controls. %Dunst \textit{et. al.} \cite{D+M+P+V} consider the stochastic LLG equation and show the existence of an optimal (relaxed) stochastic control process, which uses the technique of Young measures. Further, a numerically gradient descent method, based on Pontryagin's maximum principle, is applied to approximately solve the considered control problem.\\		
	%	\adda{\textbf{A Short note on further possibilities.}\\		
		%		Considering a more general noise, for example, infinite dimensional, L\'evy noise, etc. Maybe consider some kind of non-zero correlation between the control, the initial data and the noise, non-zero Anisotropy Energy, non-zero Electric Energy, stability of the control (i.e. once a control drives the magnetization to a desired state, how stable is the solution there). This phenomenon/ study (if done) can help, for example, in making the data storage devices more reliable with respect to the data.\\}
	%	\coma{Write a brief survey about the LLG equation and its control related works. Also write about work with ZB and UM.}\\
	%	It is worthwhile to mention that similar problems have been tackled for the LLG equation.

	Since the LLG equation and the LLB equation are similar in nature and model similar phenomena, it seems appropriate to mention some works regarding the stochastic LLG equation here. This is by no means an exhaustive list of works concerning the stochastic LLG equation. For more details, we refer the reader to \cite{ZB+BG+TJ_Weak_3d_SLLGE}, \cite{Laire_2021_RecentResultsLLEquation}, \cite{Lakshmanan_2011_FascinatingLLGE_Overview}, etc. and references therein.
	
	%\coma{Is it required to state the LLG equation here?}
	We write the stochastic LLG equation here for the reader's reference. The initial data is given by $m_0$. Neumann boundary conditions are assumed.
	\begin{align}
		\nonumber d m(t) = & \l[ \alpha \, m(t) \times \Delta m(t) - \gamma \, m(t) \times \l( m(t) \times m(t) \r) \r] \, dt \\
		& + \l[\alpha \, m(t) \times h - \gamma \, m(t) \times \l( m(t) \times h \r) \r] \circ dW(t), t\in[0,T].
	\end{align}
	Here $\alpha, \gamma$ are positive constants that depend on the gyromagnetic ratio and the damping parameter.
	Brze\'zniak, Goldys and Jegaraj in \cite{ZB+BG+TJ_Weak_3d_SLLGE} show the existence of a weak martingale solution to the stochastic LLG equation in dimensions $1,2,3$. Pu and Guo in \cite{Pu+Guo_2010} establish the existence of regular solutions to the stochastic LLG equation in dimension 2. Considering multidimensional noise and non-zero anisotropy energy, Brze\'zniak and Li in \cite{ZB+Li_Weak_Solution_SLLGE_Anisotropy} show the existence of weak solutions to the stochastic LLG equation. Goldys, Le and Tran in \cite{GB+Le+Tran_Finite_Element_Scheme_SLLGE} develop a finite element approximation scheme for the stochastic LLG equation, driven by a real-valued Wiener process. Goldys and Le, with Grotowski in \cite{BG+Grotowski+Le_Weak_martingale_SLLGE_multidimensional_noise} extend the work \cite{GB+Le+Tran_Finite_Element_Scheme_SLLGE} by considering multidimensional noise.
	In \cite{ZB+BG+TJ_LargeDeviations_LLGE} the authors establish a large deviations principle for the stochastic LLG equation in dimension 1.

	Following the lead of \cite{Dunst+Klein} (where a control problem for the LLG equation in dimension 1 is studied), Dunst \textit{et. al.} in \cite{D+M+P+V} show the existence of an optimal control for the stochastic LLG equation. Moreover, the authors also study a numerical solution of the optimal control problem. Using discretization techniques, convergence conditions for the approximate optimization problems are given. They have considered $H^1$ valued controls. The cost functional considered is similar to the one defined in \eqref{eqn cost functional}, with $F(m,u) = \l| m \r|_{L^2}^2 + \l| u \r|_{H^1}^2$.
	The authors (for the existence of an optimal control) do not consider the full noise, neglecting the triple product term. Brze\'zniak, Gokhale and Manna \cite{ZB+UM+SG_2022Preprint_SLLGE_Control} (preprint) plugged this gap by considering the full noise (for a Wiener process-driven stochastic LLG equation) in dimension 1. The existence of an optimal control, along with the existence of a maximal regular strong solution, is shown for the controlled equation. The controls are assumed to be $L^2$-valued and the cost functional is $F(m,u) = \l| m \r|_{H^1}^2 + \l| u \r|_{L^2}^2$, which is more natural as the solutions are $H^1$-valued. One can therefore see that the cost functional considered in this work is, in a sense, more general than the mentioned works.
	In \cite{Jensen+Majee+Prohl+Schellnegger_2019_DynamicProgramming_FiniteEnsembles}, the authors study magnetization dynamics in an ensemble of nanomagnetic particles, with thermal fluctuations. The study is done within the framework of the stochastic LLG equation, with anisotropic, stray, exchange and external magnetic fields.
	\dela{For the case where $\mathbb{U} = X = H^1$, 
		$L(m,u) =m \times u + m \times m \times u,$
		and \\
		$F(m,u) = \l|m\r|_{L^2}^2 + \l|u\r|_{H^1}^2$, we get the cost functional considered in the work of Prohl, \textit{et. al.} \cite{D+M+P+V} (for the stochastic LLG equation), with the control $u$ added to the effective field.}
	
	We now give a brief outline of the paper.
	Section \ref{section some notations, preliminaries} enlists some definitions and notations. In Section \ref{section the relaxed equation}, we give a relaxed formulation of equation \eqref{eqn problem considered original}, using Young measures. We also state some assumptions (Assumptions \ref{assumption Main Assumption}, \ref{assumption assumption on cost functional}) that are required on the control operator and the cost functional. Section \ref{section definitions and statements of the main results} enlists some definitions and statements of the main results. Under the assumptions given in Section \ref{section the relaxed equation}, we show (in Section \ref{section existence of a solution for the relaxed control}) that the relaxed equation \eqref{eqn problem considered relaxed} admits a weak martingale solution. The proof uses Faedo-Galerkin approximation, followed by some standard compactness results. For the control operator, we use some compactness results for Young measures. %(see Appendix \ref{section some results about Young measures})
	In Appendix \ref{section pathwise uniqueness and existence of a strong solution}, we show that the obtained solution is pathwise unique (for $d = 1,2$), hence also showing the existence of a strong solution for the problem \eqref{eqn problem considered relaxed}. Section \ref{section existence of a optimal control} shows that under Assumptions \ref{assumption Main Assumption}, \ref{assumption assumption on cost functional}, the problem \ref{eqn problem considered original} admits a weak relaxed optimal control.
	
	%\adda{\begin{remark}
			%	Since we have not assumed any specific growth condition on the cost functional, it is possible that the said problem may not admit an optimal control. This is because there may not exist any solution with finite cost.
			%\end{remark}
			%}

		\section{Some Notations, Preliminaries}\label{section some notations, preliminaries}
		We state some of the results that we have used in the proofs. We have mostly imitated the notations and borrowed results from \cite{ZB+RS}, \cite{Castaing_Book}, and \cite{Sritharan_DeterministicAndStochasticControl_SNSE}. For more details, we refer the reader to \cite{Balder_2000Book_LecturesYoungMeasureTheory}, \cite{Haussman+Lepeltier_1990_OnExistenceOptimalControls}, \cite{Jean+Jean_1981Book}, \cite{Fitte_2003_CompactnessCriteriaStableTopology},   \cite{Young_1969Book_LecturesCalculusofVariations}
		among others. 
		
		We assume that $\mathcal{O}\subset\mathbb{R}^d,\ d=1,2,3$ is a bounded domain with a smooth boundary. Only for Appendix \ref{section pathwise uniqueness and existence of a strong solution}, we restrict ourselves to $d = 1,2$. Let $0<T<\infty$ be arbitrary but fixed. Let $\mathbb{U}$ (the control set) denote a Hausdorff topological space. Let $\mathcal{B}\l(\mathbb{U}\r)$ denote the Borel $\sigma$-algebra on $\mathbb{U}$. Let $\mathcal{P}\l(\mathbb{U}\r)$ denote the set of all probability measures on $\mathcal{B}\l(\mathbb{U}\r)$. The set $\mathcal{P}\l(\mathbb{U}\r)$ is endowed with the $\sigma$-algebra generated by the projection maps
		\begin{equation}
			\pi_C : \mathcal{P}\l(\mathbb{U}\r) \ni q \mapsto q(C) \in [0,1] ,\ C\in \mathcal{B}\l(\mathbb{U}\r).
		\end{equation}
		Let $\l( \Omega , \mathcal{F} , \mathbb{F}, \mathbb{P} \r)$ denote a filtered probability space (satisfying the usual hypothesis) with filtration $\mathbb{F} = \l(\mathcal{F}_t\r)_{t\in[0,T]}$. Throughout the work, let $W$ denote a $\mathbb{R}$-valued Wiener process on the given probability space. Let $0 < T < \infty$ and the given function $h\in W^{2,\infty}$.
		\begin{definition}[Relaxed Control Process]\label{definition relaxed control process}
			A stochastic process $q = \l\{q_t\r\}_{t\in[0,T]}$ with values in $\mathcal{P}\l(\mathbb{U}\r)$ is called a  relaxed control process on $\mathbb{U}$ if the map
			\begin{equation}
				[0,T] \times \Omega \ni \l(t,\omega\r) \mapsto q_t\l( \omega , \cdot \r) \in \mathcal{P}\l( \mathbb{U} \r)
			\end{equation}
			is measurable. In other words, a relaxed control process on $\mathbb{U}$ is a measurable process with values in $\mathcal{P}\l( \mathbb{U} \r)$.
		\end{definition}
		\begin{definition}[Young Measure]\label{definition Young measure}
			Let $\lambda$ denote a non-negative $\sigma$-additive measure on $\mathcal{B}\bigl( \mathbb{U} \times [0,T]\bigr)$. We say that $\lambda$ is a Young measure on $\mathbb{U}$ if and only if $\lambda$ satisfies
			\begin{equation}
				\lambda\l( \mathbb{U} \times D \r) = \text{Leb}\l(D\r),\ \forall D\in\mathcal{B}\bigl([0,T]\bigr),
			\end{equation}
			with $\text{Leb}$ denoting the Lebesgue measure on $[0,T]$. The space of all Young measures on $\mathbb{U}$ will be denoted by $\mathcal{Y}\l(0,T: \mathbb{U} \r)$.
		\end{definition}

		%\subsection{Some results about Young measures}\label{section some results about Young measures}
		%\coma{Remove the Appendices. Shift them to the Preliminaries section (Section \ref{section some notations, preliminaries}).\\}
		\begin{lemma}[Disintegration of Young measures, Lemma 2.3, \cite{ZB+RS}]\label{lemma disintegration of Young measures}
			Let $\mathbb{U}$ be a Radon space. Let $\lambda:\Omega\to \mathcal{Y}\l(0,T : \mathbb{U} \r)$ be such that for every $J\in\mathcal{B}\bigl( \mathbb{U} \times [0,T]\bigr)$, the mapping
			\begin{equation}\label{eqn definition measurability of Young measure}
				\Omega \ni \omega \mapsto \lambda\l(\omega\r) \in [0,T]
			\end{equation}
			is measurable. Then there exists a  relaxed control process $q = \l\{q_t\r\}_{t\in[0,T]}$ on $\mathbb{U}$ such that
			\begin{equation}\label{eqn disintegration formula 1}
				\lambda\l( \omega , C \times D \r) = \int_{D} q_t\l(\omega , C\r) \, dt,\ \forall C\in \mathcal{B}\l(\mathbb{U}\r),\ D\in \mathcal{B}\bigl([0,T]\bigr).
			\end{equation}
		\end{lemma}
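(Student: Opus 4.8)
The plan is to reduce the claim to the standard existence theorem for regular conditional probabilities, applied after gluing the family $\{\lambda(\omega)\}_{\omega\in\Omega}$ into one finite measure on an enlarged product space. First I would form the mixture
\[
	\Lambda:=\int_\Omega \delta_\omega\otimes\lambda(\omega)\,\mathbb{P}(d\omega),
\]
a finite measure on $\bigl(\Omega\times[0,T]\times\mathbb{U},\ \mathcal{F}\otimes\mathcal{B}\bigl([0,T]\bigr)\otimes\mathcal{B}\l(\mathbb{U}\r)\bigr)$ of total mass $T$, characterised on measurable rectangles by $\Lambda\bigl(A\times(C\times D)\bigr)=\int_A\lambda\bigl(\omega,C\times D\bigr)\,\mathbb{P}(d\omega)$. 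It is well defined precisely because $\omega\mapsto\lambda(\omega)(J)$ is measurable for every $J$, and a monotone class argument extends this set function from rectangles to the whole product $\sigma$-algebra. Since each $\lambda(\omega)$ is a Young measure, $\lambda(\omega,\mathbb{U}\times D)=\text{Leb}(D)$, so the marginal of $\Lambda$ on $\Omega\times[0,T]$ is $\mathbb{P}\otimes\text{Leb}$.

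Next I would disintegrate $\Lambda$ over the projection onto $\Omega\times[0,T]$, i.e.\ construct the conditional law of the $\mathbb{U}$-coordinate given $(\omega,t)$. This is the one place where the hypothesis that $\mathbb{U}$ is a Radon space genuinely enters: the coordinate being conditioned takes values in a Radon space, so the standard existence theorem for regular conditional probabilities (see e.g.\ \cite{Castaing_Book}) yields a $\mathcal{B}\bigl([0,T]\bigr)\otimes\mathcal{F}$-measurable, $\mathcal{P}\l(\mathbb{U}\r)$-valued kernel $(t,\omega)\mapsto\widetilde q_t(\omega,\cdot)$ — defined as $\delta_{u_0}$ for a fixed $u_0\in\mathbb{U}$ on the $\mathbb{P}\otimes\text{Leb}$-null set where the conditional law fails to be a probability measure — such that, for all $A\in\mathcal{F}$, $C\in\mathcal{B}\l(\mathbb{U}\r)$, $D\in\mathcal{B}\bigl([0,T]\bigr)$,
\[
	\int_A\lambda\bigl(\omega,C\times D\bigr)\,\mathbb{P}(d\omega)=\Lambda\bigl(A\times(C\times D)\bigr)=\int_A\Bigl(\int_D\widetilde q_t(\omega,C)\,dt\Bigr)\,\mathbb{P}(d\omega).
\]
In particular $(t,\omega)\mapsto\widetilde q_t(\omega,\cdot)$ is already a relaxed control process in the sense of Definition \ref{definition relaxed control process}.

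It remains to pass from this equality of $\mathbb{P}$-integrals to the pointwise disintegration formula \eqref{eqn disintegration formula 1}. I would fix a countable algebra $\{C_n\}_n$ generating $\mathcal{B}\l(\mathbb{U}\r)$ (available since $\mathbb{U}$ is separable metrizable) and a countable generating family of subintervals $\{D_m\}_m$ of $[0,T]$ containing $[0,T]$. For each pair $(n,m)$ the displayed identity, valid for every $A\in\mathcal{F}$, forces $\lambda\bigl(\omega,C_n\times D_m\bigr)=\int_{D_m}\widetilde q_t(\omega,C_n)\,dt$ for $\omega$ outside a $\mathbb{P}$-null set; the countable union $N$ of these null sets is $\mathbb{P}$-null, and off $N$ the identity holds for all $n,m$ at once, hence by a further monotone class argument (first in $C$, then in $D$, using that $\widetilde q_t(\omega,\cdot)$ is a probability measure off $N$) for all $C\in\mathcal{B}\l(\mathbb{U}\r)$ and $D\in\mathcal{B}\bigl([0,T]\bigr)$. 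Setting $q_t(\omega,\cdot):=\widetilde q_t(\omega,\cdot)$ for $\omega\notin N$ and $q_t(\omega,\cdot):=\delta_{u_0}$ for $\omega\in N$ preserves joint measurability and gives a relaxed control process satisfying \eqref{eqn disintegration formula 1}.

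The main obstacle, and really the heart of the matter, is the middle step: producing the conditional law of the $\mathbb{U}$-coordinate as a bona fide $\mathcal{P}\l(\mathbb{U}\r)$-valued kernel rather than a merely finitely additive object. This is exactly what the Radon hypothesis supplies — the inner regularity that lets one extend the consistent family $C_n\mapsto\widetilde q_t(\omega,C_n)$, obtained a priori only on a countable generating algebra via conditional expectations (Radon--Nikodym derivatives), to a genuine Borel probability measure on $\mathbb{U}$ for $\mathbb{P}\otimes\text{Leb}$-a.e.\ $(\omega,t)$. The gluing in the first step and the null-set bookkeeping in the last are routine.
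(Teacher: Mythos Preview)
Your argument is correct and follows the standard route to this result: glue the family $\{\lambda(\omega)\}$ into a single finite measure on $\Omega\times[0,T]\times\mathbb{U}$, disintegrate over the $\Omega\times[0,T]$ factor using the Radon hypothesis on $\mathbb{U}$ to obtain a genuine $\mathcal{P}(\mathbb{U})$-valued kernel, and then do the null-set bookkeeping via countable generating families and a monotone class argument. The only minor remark is that when you invoke a countable generating algebra $\{C_n\}$ for $\mathcal{B}(\mathbb{U})$ you are implicitly using that $\mathcal{B}(\mathbb{U})$ is countably generated; this holds in the setting of the paper since $\mathbb{U}$ is taken to be a metrizable Suslin space, but is not automatic from the bare Radon hypothesis.

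As for comparison with the paper: there is nothing to compare. The paper does not supply a proof of this lemma at all --- it is quoted verbatim as Lemma~2.3 of \cite{ZB+RS} and used as a black box. Your write-up therefore goes beyond what the present paper contains, and recovers (with the expected strategy) the result that \cite{ZB+RS} provides.
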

		The formula \eqref{eqn disintegration formula 1} is frequently denoted by
		\begin{equation}\label{eqn disintegration formula 2}
			\lambda\l( du,dt \r) = q_t\l( du \r)\, dt.
		\end{equation}
		
		Note that the above mapping \eqref{eqn definition measurability of Young measure} being measurable justifies calling $\lambda$ a random Young measure.
		
		\begin{definition}[Stable Topology]\label{definition stable topology}
			The stable topology on $\mathcal{Y}(0,T:\mathbb{U})$ is the weakest topology on $\mathcal{Y}(0,T:\mathbb{U})$ such that the mappings
			\begin{equation}
				\mathcal{Y}(0,T:\mathbb{U}) \ni \lambda \mapsto \int_{D} \int_{\mathbb{U}}f(u) \, \lambda(du,dt)\in \mathbb{R},
			\end{equation}
			is continuous for every $D\in\mathcal{B}([0,T])$ and $f\in C_{\text{b}}(\mathbb{U})$.
		\end{definition}
		
		\begin{proposition}
			The space of Young measures on a metrizable space (respectively metrizable Suslin) with the stable topology is metrizable (respectively metrizable Suslin).
		\end{proposition}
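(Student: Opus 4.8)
The plan is to identify the stable topology $\tau_{\mathrm s}$ on $\mathcal{Y}(0,T:\mathbb{U})$ with the initial topology of a \emph{countable} family of real-valued functionals, to read metrizability off from this, and to obtain the Suslin property by comparison with the narrow topology on $\mathcal{P}([0,T]\times\mathbb{U})$.

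First I would record that, $\mathbb{U}$ being metrizable and separable (the latter being automatic when $\mathbb{U}$ is Suslin), the space $\mathcal{P}(\mathbb{U})$ with the narrow topology is metrizable and separable, so there is a countable set $\{f_n\}_{n\in\mathbb{N}}\subset C_{\mathrm b}(\mathbb{U})$ for which the narrow topology on $\mathcal{P}(\mathbb{U})$ is the coarsest topology making every map $q\mapsto\int_{\mathbb{U}}f_n\,dq$ continuous (see, e.g., \cite{Castaing_Book}, \cite{Balder_2000Book_LecturesYoungMeasureTheory}); such a family separates any two finite Borel measures on $\mathbb{U}$ of equal total mass. For $n\in\mathbb{N}$ and $s\in\mathbb{Q}\cap(0,T]$ I set $\Phi_{n,s}(\lambda):=\int_{[0,s]}\int_{\mathbb{U}}f_n(u)\,\lambda(du,dt)$; by Definition \ref{definition stable topology} each $\Phi_{n,s}$ is $\tau_{\mathrm s}$-continuous, so the topology $\tau'$ generated by $\{\Phi_{n,s}\}_{n,s}$ satisfies $\tau'\subseteq\tau_{\mathrm s}$.

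The chief obstacle is the converse inclusion $\tau_{\mathrm s}\subseteq\tau'$, i.e.\ showing that every generator $\lambda\mapsto\int_{D}\int_{\mathbb{U}}f\,d\lambda$ ($f\in C_{\mathrm b}(\mathbb{U})$, $D\in\mathcal{B}([0,T])$) is $\tau'$-continuous; the difficulty is that $\{f_n\}$ is not dense in $C_{\mathrm b}(\mathbb{U})$ when $\mathbb{U}$ is noncompact, so $f$ cannot be approximated directly by the $f_n$. I would instead route through the narrow topology: for rational $s>0$ the time-averaging map $R_s:\lambda\mapsto s^{-1}\lambda(\,\cdot\,\times[0,s])\in\mathcal{P}(\mathbb{U})$ is $\tau'$-to-narrow continuous, since $(q\mapsto\int_{\mathbb{U}}f_n\,dq)\circ R_s=s^{-1}\Phi_{n,s}$, and composing $R_s$ with $q\mapsto\int_{\mathbb{U}}f\,dq$ then gives $\tau'$-continuity of $\lambda\mapsto\int_{[0,s]}\int_{\mathbb{U}}f\,d\lambda$ for every $f\in C_{\mathrm b}(\mathbb{U})$ and rational $s$. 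Since $s\mapsto\int_{[0,s]}\int_{\mathbb{U}}f\,d\lambda$ has Lipschitz constant $\le\|f\|_{\infty}$ uniformly in $\lambda$, this propagates to all $s\in[0,T]$, hence to all half-open intervals by subtraction, and finally, because every Young measure has time marginal $\mathrm{Leb}$, one has $|\int_{D}\int_{\mathbb{U}}f\,d\lambda-\int_{D'}\int_{\mathbb{U}}f\,d\lambda|\le\|f\|_{\infty}\,\mathrm{Leb}(D\triangle D')$ uniformly in $\lambda$, so a monotone-class argument extends $\tau'$-continuity to all Borel $D$. Thus $\tau_{\mathrm s}=\tau'$. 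One further checks that $\{\Phi_{n,s}\}$ separates the points of $\mathcal{Y}(0,T:\mathbb{U})$ --- a Young measure is determined by the continuous maps $s\mapsto\lambda(C\times[0,s])$, hence by their values at rational $s$, and for fixed rational $s$ the sub-measures $C\mapsto\lambda(C\times[0,s])$ share the mass $s$ and are separated by $\{f_n\}$ --- so the evaluation $\lambda\mapsto(\Phi_{n,s}(\lambda))_{n,s}$ embeds $(\mathcal{Y}(0,T:\mathbb{U}),\tau_{\mathrm s})$ topologically into $\mathbb{R}^{\mathbb{N}}$; this gives metrizability (and separability and the Hausdorff property), settling the metrizable case.

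For the Suslin case I would identify $\lambda$ with $T^{-1}\lambda\in\mathcal{P}([0,T]\times\mathbb{U})$. The constraint ``time marginal $=T^{-1}\mathrm{Leb}$'' is closed for the narrow topology, as the marginal depends narrowly continuously on the measure; hence $\mathcal{Y}(0,T:\mathbb{U})$ is a closed subset of $\mathcal{P}([0,T]\times\mathbb{U})$, which is metrizable Suslin (being $\mathcal{P}$ of the metrizable Suslin space $[0,T]\times\mathbb{U}$), so $\mathcal{Y}(0,T:\mathbb{U})$ with the narrow topology is metrizable Suslin. Each $\Phi_{n,s}$ is in addition narrowly continuous on $\mathcal{Y}(0,T:\mathbb{U})$, since the integrand $f_n(u)\mathbbm{1}_{[0,s]}(t)$ is discontinuous only on $\{s\}\times\mathbb{U}$, a set of measure zero for every Young measure (non-atomic time marginal), so the Portmanteau theorem applies; consequently $\tau_{\mathrm s}$ is coarser than the narrow topology and the identity map $(\mathcal{Y}(0,T:\mathbb{U}),\text{narrow})\to(\mathcal{Y}(0,T:\mathbb{U}),\tau_{\mathrm s})$ is continuous. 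A continuous image of a Suslin space in a Hausdorff space is Suslin, so $(\mathcal{Y}(0,T:\mathbb{U}),\tau_{\mathrm s})$ is Suslin; together with the metrizability above this is the metrizable Suslin conclusion.
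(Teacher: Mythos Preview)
The paper does not prove this proposition; it is stated without proof as a background result (the references \cite{Castaing_Book}, \cite{Balder_2000Book_LecturesYoungMeasureTheory}, \cite{Fitte_2003_CompactnessCriteriaStableTopology} are cited for this circle of results). So there is no ``paper's own proof'' to compare against.

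Your argument is essentially correct and follows the standard route taken in those references: reduce to a countable generating family $\{\Phi_{n,s}\}$ for the stable topology, embed into $\mathbb{R}^{\mathbb{N}}$ for metrizability, and obtain the Suslin property as a continuous Hausdorff image of the narrowly closed subset of $\mathcal{P}([0,T]\times\mathbb{U})$ cut out by the marginal constraint. The passage from $\tau'$-continuity on rational half-intervals to all Borel $D$ is handled correctly: the uniform estimate $\bigl|\int_{D}\int_{\mathbb{U}}f\,d\lambda-\int_{D'}\int_{\mathbb{U}}f\,d\lambda\bigr|\le\|f\|_{\infty}\,\mathrm{Leb}(D\triangle D')$ lets you realise the functional for $D$ as a \emph{uniform} limit of $\tau'$-continuous functionals, which is what makes the extension work (a bare monotone-class argument would not, since pointwise limits need not preserve continuity). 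The Portmanteau step for narrow continuity of $\Phi_{n,s}$ is also fine, because every element of $\mathcal{Y}(0,T:\mathbb{U})$ assigns zero mass to $\{s\}\times\mathbb{U}$.

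One small caveat: your metrizability argument uses a countable narrowly separating family $\{f_n\}\subset C_{\mathrm b}(\mathbb{U})$, which requires $\mathbb{U}$ to be \emph{separable} metrizable, not merely metrizable. You acknowledge this parenthetically for the Suslin case, but the paper's statement literally reads ``metrizable space'' in the first clause. In the paper's setting $\mathbb{U}$ is always Suslin (hence separable), so this does not affect anything used later; but if you want to match the statement verbatim you should either add separability as a standing hypothesis or note that the general metrizable case is not being claimed.
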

		
		\begin{definition}[Flexible Tighness]
			We say that a set $\mathscr{J}\subset \mathcal{Y}(0,T:\mathbb{U})$ is flexibly tight, if for each $\varepsilon>0$, there exists a measurable set-valued mapping $[0,T]\ni t \mapsto K_t\subset\mathbb{U}$ such that $K_t$ is compact for all $t\in[0,T]$ and
			\begin{equation}
				\sup_{\lambda\in\mathscr{J}}\int_{0}^{T}\int_{\mathbb{U}} \mathbbm{1}_{K_t^c}(u)\lambda(du,dt) < \varepsilon.
			\end{equation}
		\end{definition}
		
		\begin{definition}[Inf Compactness]
			A function $\eta:\mathbb{U} \to [0,\infty]$ is called inf-compact if and only if for every $R>0$ the level set
			\begin{equation*}
				\l\{ \eta \leq R \r\} = \l\{ u\in\mathbb{U} : \eta(u) \leq R \r\},
			\end{equation*}
			is compact.
		\end{definition}

		\begin{lemma}[Theorem 2.13, \cite{ZB+RS}%, see also \cite{Sritharan_DeterministicAndStochasticControl_SNSE}
			]\label{lemma tightness}
			The following are equivalent for any $\mathscr{J}\subset \mathcal{Y}(0,T:\mathbb{U})$.
			\begin{enumerate}
				\item $\mathscr{J}$ is flexibly tight.
				
				\item There exists an inf compact function $\eta$ such that
				\begin{equation}
					\sup_{\lambda \in \mathscr{J}} \int_{0}^{T} \int_{\mathbb{U}} \eta(t,u) \lambda(du,dt) < \infty.
				\end{equation}
			\end{enumerate}
		\end{lemma}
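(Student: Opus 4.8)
The plan is to prove the two implications separately, the bulk of the work being $(1)\Rightarrow(2)$. For $(2)\Rightarrow(1)$ I would argue by a Markov-type inequality. Here an ``inf-compact function $\eta$'' in $(2)$ is understood as a normal integrand $\eta:[0,T]\times\mathbb{U}\to[0,\infty]$, i.e.\ $\eta$ is jointly measurable and each section $\eta(t,\cdot)$ is lower semicontinuous, with the extra property that every sub-level set $\{u:\eta(t,u)\le R\}$ is compact; for such $\eta$ the multifunction $t\mapsto\{u:\eta(t,u)\le R\}$ is then measurable and closed-valued (Castaing--Valadier). Given $M:=\sup_{\lambda\in\mathscr{J}}\int_0^T\int_{\mathbb{U}}\eta\,d\lambda<\infty$, fix $\varepsilon>0$, set $R:=M/\varepsilon$ and take $K_t:=\{u:\eta(t,u)\le R\}$, which is compact for each $t$ and depends measurably on $t$. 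On $K_t^c$ one has $\mathbbm{1}_{K_t^c}(u)\le\eta(t,u)/R$, so integrating against any $\lambda\in\mathscr{J}$ gives $\int_0^T\int_{\mathbb{U}}\mathbbm{1}_{K_t^c}\,d\lambda\le M/R=\varepsilon$, uniformly in $\lambda$. That is flexible tightness.

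For $(1)\Rightarrow(2)$ the idea is to synthesise a single inf-compact integrand from countably many compact-valued maps produced by flexible tightness. I would apply the definition with $\varepsilon=4^{-n}$ to obtain, for each $n\in\mathbb{N}$, a measurable compact-valued map $t\mapsto K_t^n$ with $\sup_{\lambda\in\mathscr{J}}\int_0^T\int_{\mathbb{U}}\mathbbm{1}_{(K_t^n)^c}\,d\lambda\le 4^{-n}$; after replacing $K_t^n$ by $\bigcup_{j\le n}K_t^j$ (a finite union of compacts is compact, and measurability of the multifunction is preserved) I may assume $K_t^n\subset K_t^{n+1}$. Then set $\eta(t,u):=\sum_{n\ge1}2^n\,\mathbbm{1}_{(K_t^n)^c}(u)$. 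Three checks remain. First, $\eta$ is a normal integrand: $\eta(t,\cdot)$ is lower semicontinuous, being a nonnegative countable sum of indicators of the open sets $(K_t^n)^c$, and $\eta$ is jointly measurable because each closed-valued measurable multifunction $t\mapsto K_t^n$ has a product-measurable graph (here $\mathbb{U}$ metrizable Suslin is used), so each $\mathbbm{1}_{(K_t^n)^c}$, and hence the countable sum, is jointly measurable. Second, $\eta$ is inf-compact: if $\eta(t,u)\le R$ then, by monotonicity of the $K_t^n$, $u$ must lie in $K_t^n$ whenever $2^n>R$ (otherwise the single term $2^n\mathbbm{1}_{(K_t^n)^c}(u)$ already exceeds $R$), so $\{u:\eta(t,u)\le R\}$ is a subset of some $K_t^{n_0}$ with $2^{n_0}>R$, and it is closed by lower semicontinuity of $\eta(t,\cdot)$, hence compact. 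Third, the uniform bound: by monotone convergence $\int_0^T\int_{\mathbb{U}}\eta\,d\lambda=\sum_{n\ge1}2^n\int_0^T\int_{\mathbb{U}}\mathbbm{1}_{(K_t^n)^c}\,d\lambda\le\sum_{n\ge1}2^n4^{-n}=1$ for every $\lambda\in\mathscr{J}$, so $\sup_{\lambda\in\mathscr{J}}\int_0^T\int_{\mathbb{U}}\eta\,d\lambda\le1<\infty$.

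The step I expect to be the main obstacle is the measurability bookkeeping behind the normal-integrand claim: one must know that a measurable closed-valued multifunction into the metrizable Suslin space $\mathbb{U}$ has a product-measurable graph, and that finite unions and sub-level-set operations preserve measurability of multifunctions. This is exactly where the Suslin hypothesis on $\mathbb{U}$ enters, and it is also implicitly needed for the measurability of $t\mapsto K_t$ in the $(2)\Rightarrow(1)$ direction. Once the measurable-selection toolkit is granted, both implications are soft: $(2)\Rightarrow(1)$ is Markov's inequality and $(1)\Rightarrow(2)$ is a geometric-series estimate.
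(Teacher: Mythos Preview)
The paper does not supply its own proof of this lemma; it is quoted as Theorem~2.13 of \cite{ZB+RS} and used as a black box (for instance in Lemma~\ref{lemma FG tightness}). Your proposal is a correct and standard proof of the equivalence: the $(2)\Rightarrow(1)$ direction via the Markov-type bound $\mathbbm{1}_{K_t^c}\le\eta(t,\cdot)/R$, and the $(1)\Rightarrow(2)$ direction via the geometric-series construction $\eta(t,u)=\sum_{n\ge1}2^n\mathbbm{1}_{(K_t^n)^c}(u)$ with $\varepsilon_n=4^{-n}$, are precisely the arguments found in the Young-measure literature (Balder, Castaing--Valadier, and the source \cite{ZB+RS} itself). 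Your identification of the measurable-selection bookkeeping as the only genuinely delicate step is accurate, and the Suslin hypothesis on $\mathbb{U}$ is exactly what is needed for graph measurability of the closed-valued multifunctions $t\mapsto K_t^n$. One minor remark: the paper's Definition of inf-compactness is phrased for functions $\eta:\mathbb{U}\to[0,\infty]$, while the lemma writes $\eta(t,u)$; your reading of $\eta$ as a normal integrand whose $t$-sections are inf-compact is the intended interpretation and agrees with \cite{ZB+RS}.
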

		
		\begin{lemma}[Theorem 2.16, \cite{ZB+RS}]\label{lemma for convergence of integral of continuous bounded function Young measures}
			Let $\mathbb{U}$ be a metrizable Suslin space. If $\lambda_n \to \lambda$ stably in $\mathcal{Y}(0,T:\mathbb{U})$, then for every $f\in L^1(0,T:C_{\text{b}}(\mathbb{U}))$ we have
			\begin{equation}
				\lim_{n\to\infty} \int_{0}^{T}\int_{\mathbb{U}} f(t,u) \, \lambda_n(du,dt) =  \int_{0}^{T}\int_{\mathbb{U}} f(t,u) \, \lambda(du,dt).
			\end{equation}
		\end{lemma}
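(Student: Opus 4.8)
The plan is to reduce the statement, for a general $f\in L^1(0,T:C_{\text{b}}(\mathbb{U}))$, to the defining test functions of the stable topology by a density argument, the only quantitative ingredient being a uniform-in-$n$ bound on the pairing $\lambda\mapsto\int_0^T\int_{\mathbb{U}}g\,d\lambda$ coming from the Young-measure normalisation. First I would record the building blocks: by Definition \ref{definition stable topology}, for every $D\in\mathcal{B}([0,T])$ and $g\in C_{\text{b}}(\mathbb{U})$ the convergence $\int_0^T\int_{\mathbb{U}}\mathbbm{1}_D(t)g(u)\,\lambda_n(du,dt)\to\int_0^T\int_{\mathbb{U}}\mathbbm{1}_D(t)g(u)\,\lambda(du,dt)$ holds, and by linearity the same holds for every $C_{\text{b}}(\mathbb{U})$-valued simple function $\varphi(t,u)=\sum_{i=1}^k\mathbbm{1}_{D_i}(t)g_i(u)$ with $D_i\in\mathcal{B}([0,T])$ and $g_i\in C_{\text{b}}(\mathbb{U})$.

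Next I would establish the key estimate: for any $\mu\in\mathcal{Y}(0,T:\mathbb{U})$ and any $\mathcal{B}([0,T])\otimes\mathcal{B}(\mathbb{U})$-measurable $g$ with $|g(t,u)|\le\psi(t)$ for some $\psi\in L^1(0,T)$, one has
\[
\Bigl|\int_0^T\!\!\int_{\mathbb{U}} g(t,u)\,\mu(du,dt)\Bigr|\le\int_0^T\!\!\int_{\mathbb{U}}\psi(t)\,\mu(du,dt)=\int_0^T\psi(t)\,dt,
\]
the last equality being exactly the defining property $\mu(\mathbb{U}\times D)=\mathrm{Leb}(D)$ of a Young measure (Definition \ref{definition Young measure}), read through the change-of-variables formula for the projection $\mathbb{U}\times[0,T]\to[0,T]$. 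Taking $\psi(t)=\|f(t,\cdot)\|_{C_{\text{b}}(\mathbb{U})}$ — measurable since $f$ is strongly (Bochner) measurable, integrable by hypothesis — gives $|\int_0^T\int_{\mathbb{U}}f\,d\mu|\le\|f\|_{L^1(0,T:C_{\text{b}}(\mathbb{U}))}$ for $\mu=\lambda$ and for every $\mu=\lambda_n$; applied to $|f|$, the same computation shows the double integrals in the statement are well defined, once one notes that $(t,u)\mapsto f(t)(u)$ is jointly measurable, being an a.e. pointwise limit of simple functions of the above form.

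With these in hand, I would finish with a standard $3\varepsilon$ argument: given $\varepsilon>0$, pick a simple $\varphi$ with $\|f-\varphi\|_{L^1(0,T:C_{\text{b}}(\mathbb{U}))}<\varepsilon$ (possible because simple functions are dense in the Bochner space $L^1(0,T:C_{\text{b}}(\mathbb{U}))$), write $\int_0^T\int_{\mathbb{U}}f\,d\lambda_n-\int_0^T\int_{\mathbb{U}}f\,d\lambda$ as $\int(f-\varphi)\,d\lambda_n+\bigl(\int\varphi\,d\lambda_n-\int\varphi\,d\lambda\bigr)+\int(\varphi-f)\,d\lambda$, bound the first and last terms by $\varepsilon$ via the key estimate, send $n\to\infty$ in the middle term using the first paragraph, and then let $\varepsilon\downarrow0$. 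The one point requiring care — the closest thing to an obstacle — is that $C_{\text{b}}(\mathbb{U})$ is in general non-separable, so $L^1(0,T:C_{\text{b}}(\mathbb{U}))$ must be read as the space of strongly measurable, integrable maps; it is precisely this reading that simultaneously supplies the density of simple functions and the joint measurability of $(t,u)\mapsto f(t)(u)$. Alternatively, the key estimate can be obtained from disintegration (Lemma \ref{lemma disintegration of Young measures}, applicable since a metrizable Suslin space is Radon) via $\int_0^T\int_{\mathbb{U}}|f(t,u)|\,q_t(du)\,dt\le\int_0^T\|f(t,\cdot)\|_{C_{\text{b}}(\mathbb{U})}\,dt$, using $q_t(\mathbb{U})=1$.
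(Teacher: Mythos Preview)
The paper does not give its own proof of this lemma: it is quoted verbatim as Theorem~2.16 of \cite{ZB+RS} and used as a black box, so there is nothing to compare against directly. Your argument is the standard one and is correct: convergence for $C_{\text{b}}(\mathbb{U})$-valued simple functions is immediate from Definition~\ref{definition stable topology}, the uniform bound $\bigl|\int_0^T\int_{\mathbb{U}} g\,d\mu\bigr|\le \|g\|_{L^1(0,T:C_{\text{b}}(\mathbb{U}))}$ for every $\mu\in\mathcal{Y}(0,T:\mathbb{U})$ follows from the marginal condition in Definition~\ref{definition Young measure}, and the $3\varepsilon$ step is then routine. Your remark that $L^1(0,T:C_{\text{b}}(\mathbb{U}))$ must be read as the Bochner space of strongly measurable maps is exactly the right caveat, and it is what makes both the density of simple functions and the joint measurability of $(t,u)\mapsto f(t)(u)$ available; this is also how the result is set up in \cite{ZB+RS} and the Young-measure literature it draws on (e.g.\ \cite{Castaing_Book}, \cite{Balder_2000Book_LecturesYoungMeasureTheory}).
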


		\section{The Relaxed Equation}\label{section the relaxed equation}
		Following the discussion in Section \ref{section Introduction}, we replace the equation \eqref{eqn problem considered original} with the following relaxed equation. We primarily follow the works \cite{ZB+RS}, (see also \cite{D+M+P+V}, \cite{UM+DM}, and \cite{Sritharan_DeterministicAndStochasticControl_SNSE}).
		\begin{align}\label{eqn problem considered relaxed}
			\nonumber m(t) = & m_0 + \int_{0}^{t} \Delta m(s) + m(s) \times \Delta m(s) - \l( 1 + \l| m(s) \r|_{\mathbb{R}^3}^2 \r) \, ds \\
			& + \int_{0}^{t} \int_{\mathbb{U}} \l[ L(m(s),u) \r] \, \lambda(du,ds)  + \int_{0}^{t} \l( m(s) \times h + h \r)\, \circ dW(s).
		\end{align}
		Using the It\^o stochastic integral., the above equation can be understood as follows.
		\begin{align}
			\nonumber m(t) = & m_0 + \int_{0}^{t} \Delta m(s) + m(s) \times \Delta m(s) - \l( 1 + \l| m(s) \r|_{\mathbb{R}^3}^2 \r) \, ds \\
			& + \int_{0}^{t} \int_{\mathbb{U}} \l[ L(m(s),u) \r] \, \lambda(du,ds) + \frac{1}{2} \int_{0}^{t} \l( m(s) \times h \r) \times h \, ds
			+ \int_{0}^{t} \l( m(s) \times h + h \r)\,  dW(s).
		\end{align}
		Moreover, using Lemma \ref{lemma disintegration of Young measures} we can write the Young measure $\lambda$ using a relaxed control process $q$. Let $\phi$ be a test function (regularity specified later). The above equation can be understood in a weak (PDE) sense as follows.
		\begin{align}\label{eqn problem considered relaxed weak form with Ito integral}
			\nonumber \l\langle m(t) , \phi \r\rangle_{L^2}  = & \l\langle m_0 , \phi \r\rangle_{L^2} - \int_{0}^{t}  \l\langle \nabla m(s) , \nabla \phi \r\rangle_{L^2}  -  \l\langle m(s) \times \nabla m(s) , \nabla \phi \r\rangle_{L^2} \\
			\nonumber & -  \int_{\mathcal{O}} \l\langle \l( 1 + \l| m(s,x) \r|_{\mathbb{R}^3}^2 \r) m(s,x) , \phi \r\rangle_{\mathbb{R}^3} \, dx  \, ds  + \int_{0}^{t}  \l\langle \int_{\mathbb{U}}  L( m(s),u ) \, q_s(du) , \phi \r\rangle_{L^2}  \, ds \\
			& + \frac{1}{2} \int_{0}^{t}  \l\langle \bigl( m(s) \times h \bigr) \times h  , \phi \r\rangle_{L^2} \,  ds  + \int_{0}^{t}  \l\langle \bigl( m(s) \times h + h \bigr) , \phi \r\rangle_{L^2} \,  dW(s),\ t\in[0,T].
		\end{align}

		\begin{assumption}\label{assumption Main Assumption}{Assumptions on the control operator:}\hfill\\
			Recall that $X$ is a separable Hilbert space that is densely and continuously embedded into the space $L^2$.
			\begin{enumerate}
				
				\item 
				$L: L^2 \times \mathbb{U} \to X$.
				There exists an inf compact mapping $\kappa$ such that the following holds.
				\begin{equation}
					\l| L(m,v) \r|_{X} \leq C \l| m \r|_{L^2} \kappa (\cdot , v) + C \l| m \r|_{L^2} ,\ \forall v \in \mathbb{U}, \ m\in L^2.
				\end{equation}
				The second term on the right hand side of the above inequality does not change the calculations substantially. Therefore the term will not be considered henceforth.\\
				Moreover, for every Young measure $\lambda$, the function $\kappa$ satisfies the following bound.
				\begin{equation}
					\mathbb{E}\int_{0}^{T} \int_{\mathbb{U}} \kappa^2(s,v) \, \lambda(dv,ds)<\infty.
				\end{equation}
				For higher regularity, we assume the following bound on the function $\kappa$ above. For every random Young measure $\lambda$,
				\begin{equation}
					\mathbb{E}\int_{0}^{T} \int_{\mathbb{U}} \kappa^4(s,v) \,  \lambda(dv,ds)<\infty.
				\end{equation}

				\item 
				\begin{enumerate}
					\item The mapping $L$ is continuous in the first variable in the following sense. Let $R>0$. Let 
					$$ K_R = \l\{ v \in \mathbb{U} : \kappa(\cdot , v) \leq R\r\}.$$ Then for a sequence $w_n \to w$ in $L^2$, we have
					\begin{equation}
						\lim_{n\to\infty} \sup_{v\in K_R} \l| L(w_n , v) - L(w , v) \r|_{X} = 0.
					\end{equation}

					\item For each $w\in L^2$, the mapping
					\begin{equation}
						L(w,\cdot):\mathbb{U} \to L^2,
					\end{equation}
					is continuous.
					
					\dela{				\item The mapping $L$ is uniformly Lipschitz continuous (in the first variable).\adda{\\ (This is required for the existence part, wherein the finite dimensional FG Approximations require local Lipschitz coefficients).\\}
						\coma{Does assuming that $L$ is uniformly continuous in both the arguments suffice to say the above property?}
						\coma{\\OR\\
							Let $\lambda$ be a random Young measure on $\mathbb{U}$. with the disintegration $\lambda(dv,dt) = q_t(dv)dt$. Then the function
							\begin{equation}
								L^2\ni w \mapsto \int_{0} L(w,v) \, q_t(dv) \in L^2
							\end{equation}
							is Lipschitz continuous.}
					}
					
					\item Note that the following assumption is required only for ensuring that the finite dimensional approximation in Section \ref{section FG uniform energy estimates} admits a unique solution. For any random Young measure $\lambda$ with disintegration $\lambda(dv,dt) = q_t(dv)\,dt$, there exists an integrable function $f_{\lambda}: [0,T] \times \mathbb{U} \to \mathbb{R}$ such that for $w_1,w_2\in L^2$,
					\begin{equation}
						\l| L(w_1,v) - L(w_2,v) \r|_{X} \leq C \l| m_1 - m_2 \r|_{L^2}f_{\lambda}(\cdot,v).
					\end{equation}

				\end{enumerate}

				\item  Note that the following assumption is required specifically for the pathwise uniqueness of the weak martingale solution (see Theorem \ref{theorem pathwise uniqueness of solution}). There exists a constant $C>0$ such that for $m_1,m_2\in L^2$,
				\begin{equation}
					\l|L(m_1,v) - L(m_2,v)\r|_{X} \leq C \l|m_1 - m_2\r|_{L^2} \kappa(\cdot ,v),\ \forall v\in\mathbb{U},\ m\in L^2.
				\end{equation}

			\end{enumerate}	
		\end{assumption}

		\begin{assumption}{Assumptions on the cost functional:}\label{assumption assumption on cost functional}
			\begin{enumerate}
				\item $F:[0,T] \times H^1 \times \mathbb{U} \to [0,\infty]$ is assumed to be measurable in $t\in[0,T]$ and lower semicontinuous with respect to $\l( m , v \r) \in H^1 \times \mathbb{U}$.
				\item For the function $\kappa$ mentioned above, there exists a constant $C>0$ such that
				\begin{equation}\label{eqn coercivity assumption}
					F(t,m(t),v) \geq C \kappa^{4} (t,v),\ \forall v\in \mathbb{U},\ t\in[0,T],\forall m\in H^1.
				\end{equation}
				\item The terminal cost function $\Psi : H^1 \to \mathbb{R}$ is assumed to be lower semicontinuous.
			\end{enumerate}
		\end{assumption}

		\begin{remark}
			Assumption \ref{assumption Main Assumption} is given for a Hilbert space $X$ densely and continuously embedded into the space $L^2$. Since $X\hookrightarrow L^2\hookrightarrow X^\p$ is a Gelfand triple, we observe the following. For $v\in X,w\in L^2$,
			\begin{equation}
				\ _{X}\l\langle v , w \r\rangle_{X^\p}  = \l\langle v , w \r\rangle_{L^2}.
			\end{equation}
			Moreover, there exists constants $C_1,C_2 > 0$ such that for $v\in X$,
			\begin{equation}
				\l| v \r|_{X^\p} \leq C_1 \l| v \r|_{L^2} \leq C_2 \l| v \r|_{X}.
			\end{equation}
			With the above observations in mind, we show the calculations only for the special case $X = L^2$.
		\end{remark}

		\begin{remark}[Particular Examples]\label{remark particular examples} \hfill
			\begin{enumerate}			
				\item
				For $\mathbb{U} = X = L^2$, with $$L(m,u) = m \times u + u,$$ we get the control operator when the control $u$ is simply added to the effective field .
				Considering the cost functional $F(m,u) = \l|m\r|_{H^1}^2 + \l|u\r|_{L^2}^2$ will give an optimal control (relaxed) for a simplified stochastic LLB control equation. Regarding the inf compact funtion, it suffices to have $\kappa(t,u) = \l| u(t) \r|_{H^2}$. (By Example 2.12, \cite{ZB+RS}, we have that $\kappa$ is inf compact).			
			\end{enumerate}
		\end{remark}

		\section{Definitions and Statements of the main results}\label{section definitions and statements of the main results}

		\begin{definition}[Weak martingale solution to the relaxed equation \eqref{eqn problem considered relaxed}]\label{definition weak martingale solution relaxed equation}
			Let $\lambda$ be a given random Young measure on the space $\mathbb{U}$. A tuple 
			$ \tilde{\pi} = \l( \tilde{\Omega} , \tilde{\mathcal{F}} , \tilde{\mathbb{F}} , \tilde{\mathbb{P}} , \tilde{m} , \tilde{\lambda} , \tilde{W} \r)$ is said to be a weak martingale solution of the problem \eqref{eqn problem considered relaxed} if the following hold.
			\begin{enumerate}
				\item The tuple $\l( \tilde{\Omega} , \tilde{\mathcal{F}} , \tilde{\mathbb{F}} , \tilde{\mathbb{P}} \r)$ is a filtered probability space that satisfies the usual conditions, with the filtration $\tilde{\mathbb{F}} = \l\{ \tilde{\mathcal{F}}_t \r\}_{t\in[0,T]}$ and $\sigma$-algebra $\tilde{\mathcal{F}}$.
				\item The process $\tilde{W}$ is a real valued Wiener process on the above mentioned probability space.
				\item $\tilde{\lambda}$ is a random Young measure on the space $\mathbb{U}$ with the same law as that of $\lambda$ on the space $\mathcal{Y}(0,T:\mathbb{U})$. $\mathbb{U}$ %\coma{$[\lambda^\p \in \mathcal{Y}(0,T:\mathbb{U}),\ \mathbb{P}^\p-a.s.]$}.
				\item The process $\tilde{m}$ is an $H^1$-valued progressively measurable such that
				\begin{equation}
					\tilde{\mathbb{E}} \sup_{t\in[0,T]}\l| \tilde{m}(t) \r|_{H^1}^2 < \infty
				\end{equation}
				\begin{equation}
					\tilde{\mathbb{E}} \int_{0}^{T} \l| \tilde{m}(t) \r|_{H^2}^2 \, dt < \infty
				\end{equation}
				\item The equality \eqref{eqn problem considered relaxed weak form with Ito integral} holds (with $\tilde{m}, \tilde{\lambda}$ and $\tilde{W}$) for each $\phi\in L^4(\tilde{\Omega}:H^1)$ and $t\in[0,T]$.
			\end{enumerate}
		\end{definition}	
		Corresponding to the cost functional $J$ defined in \eqref{eqn cost functional}, we define the relaxed cost functional $\mathcal{J}$ as follows, for an admissible solution $\pi\in\mathcal{U}(m_0,T)$, $ \pi = \l( \Omega , \mathcal{F} , \mathbb{F} , \mathbb{P} , m , \lambda , W \r)$.
		\begin{equation}\label{eqn relaxed cost functional}
			\mathcal{J}\l( \pi  \r) : = \mathbb{E}\l[ \int_{0}^{T} \int_{\mathbb{U}} F\bigl(t,m(t) , u \bigr) \, \lambda(du,dt) + \Psi\bigl(m(T)\bigr) \r]
		\end{equation}
		The space of admissible solutions $\mathcal{U}(m_0,T)$ is the collection of all weak martingale solutions to the problem \eqref{eqn problem considered relaxed}, with finite cost $\mathcal{J}$. \\
		%	We use the following notation.
		%	\begin{equation}
			%		F\bigl(t,m(t) , \lambda \bigr) = \int_{0}^{T} \int_{\mathbb{U}} F(t,m,u) \, \lambda(du,dt).
			%	\end{equation}
		Let us denote by $\Lambda$, the infimum of $\mathcal{J}$ over the space of admissible solutions $\mathcal{U}(m_0,T)$.
		%	\adda{ That is,
			%		\begin{equation}
				%			\Lambda = \inf_{\pi\in\mathcal{U}(m_0,T)}\mathcal{J}(\pi).
				%		\end{equation}
			%	}

		\begin{theorem}[Existence of weak martingale solution to \eqref{eqn problem considered relaxed}]\label{theorem existence of weak martingale solution}
			Let $\lambda$ be a random Young measure on the space $\mathbb{U}$. Then the equation \eqref{eqn problem considered relaxed} admits a weak martingale solution as defined in \ref{definition weak martingale solution relaxed equation}. Moreover, $\lambda$ and $\tilde{\lambda}$ have the same laws on the space $\mathcal{Y}(0,T:\mathbb{U})$.
		\end{theorem}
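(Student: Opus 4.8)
The plan is to construct the solution by a Faedo--Galerkin scheme followed by a compactness/Skorokhod argument, in the spirit of \cite{ZB+BG+Le_SLLBE} and \cite{ZB+RS}. Let $\{e_i\}_{i\ge1}$ be the $L^2$-orthonormal basis of eigenfunctions of the negative Neumann Laplacian, let $H_n=\mathrm{span}\{e_1,\dots,e_n\}$ and let $\Pi_n\colon L^2\to H_n$ be the orthogonal projection. Projecting \eqref{eqn problem considered relaxed weak form with Ito integral} onto $H_n$ produces a finite-dimensional system of It\^o SDEs for $m_n(t)=\sum_{i\le n}c_i^n(t)e_i$; its drift is locally Lipschitz in $m_n$ — the cross product and the cubic term are smooth, and for the control term this is Assumption \ref{assumption Main Assumption}(2)(c) together with the disintegration $\lambda(dv,dt)=q_t(dv)\,dt$ of Lemma \ref{lemma disintegration of Young measures} — so a unique local solution exists, and the a priori bounds below make it global.

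\emph{A priori estimates.} Applying the It\^o formula to $|m_n(t)|_{L^2}^2$ (and to its even powers), the Laplacian yields $-2\int_0^t|\nabla m_n|_{L^2}^2$, the bilinear term $\langle m_n\times\Delta m_n,m_n\rangle_{L^2}$ vanishes, the cubic term contributes the favourable $-2\int_0^t\bigl(|m_n|_{L^2}^2+|m_n|_{L^4}^4\bigr)$, the control term is bounded via Assumption \ref{assumption Main Assumption}(1) by $\int_0^t|m_n|_{L^2}\int_{\mathbb U}\kappa(s,v)\,q_s(dv)\,ds$, and the Stratonovich correction and the martingale part are controlled by $h\in W^{2,\infty}$ and the Burkholder--Davis--Gundy inequality; with $\mathbb{E}\int_0^T\int_{\mathbb U}\kappa^2\,\lambda<\infty$ and Gronwall this gives
\begin{equation*}
\sup_n\Big(\mathbb{E}\sup_{t\in[0,T]}|m_n(t)|_{L^2}^{2}+\mathbb{E}\int_0^T|\nabla m_n(t)|_{L^2}^2\,dt+\mathbb{E}\int_0^T|m_n(t)|_{L^4}^4\,dt\Big)<\infty .
\end{equation*}
Testing next against $-\Delta m_n$ (equivalently, It\^o for $|\nabla m_n(t)|_{L^2}^2$), using $\langle\Delta m_n,m_n\times\Delta m_n\rangle_{L^2}=0$, integration by parts for the cubic term, the $L^4$-bound above, and now the stronger $\mathbb{E}\int_0^T\int_{\mathbb U}\kappa^4\,\lambda<\infty$, one upgrades this to
\begin{equation*}
\sup_n\Big(\mathbb{E}\sup_{t\in[0,T]}|m_n(t)|_{H^1}^{2}+\mathbb{E}\int_0^T|m_n(t)|_{H^2}^2\,dt\Big)<\infty ,
\end{equation*}
which is exactly the regularity demanded by Definition \ref{definition weak martingale solution relaxed equation}. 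From the equation one also obtains a uniform H\"older-in-time bound for $m_n$ with values in a negative-order Sobolev space, the stochastic integral giving the limiting exponent $1/2-\varepsilon$.

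\emph{Tightness and Skorokhod.} Consider the laws of $(m_n,\lambda,W)$ on $\bigl(C([0,T];H^{-1})\cap L^2(0,T;H^1)\bigr)\times\bigl(L^2(0,T;H^2)\text{ with the weak topology}\bigr)\times\mathcal{Y}(0,T:\mathbb U)\times C([0,T];\mathbb R)$. The $m_n$-component is tight by the Aubin--Lions--Simon lemma and the fractional time bound; the $\lambda$-component is tight because its law is fixed (alternatively, by flexible tightness via Lemma \ref{lemma tightness} and the $\kappa^2$-bound); $W$ is fixed. By the Jakubowski--Skorokhod representation theorem there is a probability space carrying $(\tilde m_n,\tilde\lambda_n,\tilde W_n)$ with the same joint laws, converging a.s. in this product topology to $(\tilde m,\tilde\lambda,\tilde W)$; in particular $\tilde m_n\to\tilde m$ strongly in $L^2(0,T;H^1)$ and in $C([0,T];H^{-1})$, $\Delta\tilde m_n\rightharpoonup\Delta\tilde m$ in $L^2(0,T;L^2)$, $\tilde\lambda_n\to\tilde\lambda$ stably with $\tilde\lambda\stackrel{d}{=}\lambda$, and $\tilde W_n,\tilde W$ are Wiener processes for the respective natural filtrations.

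\emph{Passage to the limit.} It remains to pass to the limit in the $H_n$-projected equation for $(\tilde m_n,\tilde\lambda_n,\tilde W_n)$ and to identify $(\tilde\Omega,\tilde{\mathcal F},\tilde{\mathbb F},\tilde{\mathbb P},\tilde m,\tilde\lambda,\tilde W)$ as a weak martingale solution. Strong $L^2(0,T;H^1)$-convergence handles $\nabla\tilde m_n$ and, with $\Delta\tilde m_n\rightharpoonup\Delta\tilde m$, the term $\tilde m_n\times\Delta\tilde m_n$; the cubic term converges because $\tilde m_n\to\tilde m$ in $L^2(0,T;L^2)$ and the $L^4$-bound gives equi-integrability of $|\tilde m_n|^2\tilde m_n$. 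For the control term, fix $R>0$ and $K_R=\{v:\kappa(\cdot,v)\le R\}$ and decompose, for $\phi\in H^1$,
\begin{multline*}
\int_0^t\!\!\int_{\mathbb U}\langle L(\tilde m_n(s),v),\phi\rangle_{L^2}\,\tilde\lambda_n(dv,ds)
=\int_0^t\!\!\int_{K_R}\langle L(\tilde m(s),v),\phi\rangle_{L^2}\,\tilde\lambda_n(dv,ds)\\
+\int_0^t\!\!\int_{K_R}\langle L(\tilde m_n(s),v)-L(\tilde m(s),v),\phi\rangle_{L^2}\,\tilde\lambda_n(dv,ds)
+\int_0^t\!\!\int_{K_R^c}\langle L(\tilde m_n(s),v),\phi\rangle_{L^2}\,\tilde\lambda_n(dv,ds):
\end{multline*}
the first term converges by stable convergence of $\tilde\lambda_n$ and Lemma \ref{lemma for convergence of integral of continuous bounded function Young measures}, since $v\mapsto\langle L(\tilde m(s),v),\phi\rangle_{L^2}$ is continuous (Assumption \ref{assumption Main Assumption}(2)(b)) and bounded on $K_R$ (Assumption \ref{assumption Main Assumption}(1)); the second is $\le|\phi|_{L^2}\int_0^T\sup_{v\in K_R}|L(\tilde m_n(s),v)-L(\tilde m(s),v)|_X\,ds\to0$ by Assumption \ref{assumption Main Assumption}(2)(a); and the third is bounded uniformly in $n$ by $R\,|\phi|_{L^2}\,\mathbb{E}\int_0^T\int_{K_R^c}\kappa\,\tilde\lambda_n$, which by Chebyshev's inequality and the $\kappa^2$-moment bound tends to $0$ as $R\to\infty$. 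For the stochastic term one shows that $\tilde M_n(t):=\tilde m_n(t)-\Pi_n m_0-\int_0^t(\cdots)\,ds$ is a square-integrable martingale with quadratic variation $\int_0^t|\Pi_n(\tilde m_n\times h+h)|_{L^2}^2\,ds$, passes to the limit in both, and concludes by a martingale representation argument that the limit equals $\int_0^t(\tilde m\times h+h)\,d\tilde W$; the bounds in Definition \ref{definition weak martingale solution relaxed equation}(4) follow from the uniform estimates by weak lower semicontinuity and Fatou, and $\tilde\lambda\stackrel{d}{=}\lambda$ by construction. The main obstacle is precisely the control term: one has only stable (weak) convergence of the Young measures and no strong convergence of $m_n$ in a topology rendering $L(\cdot,v)$ continuous, so the splitting above together with the inf-compactness of $\kappa$ (to kill the $v$-tail) is exactly what makes the limit pass through.
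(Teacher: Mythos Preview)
Your proof follows essentially the same route as the paper: Faedo--Galerkin approximation, the same two-level energy estimates (It\^o for $|m_n|_{L^2}^2$ and then for $|\nabla m_n|_{L^2}^2$), tightness via Aubin--Lions--Simon together with the inf-compactness of $\kappa$, Skorokhod representation, and the same $K_R$-splitting of the control term into a piece handled by Assumption~\ref{assumption Main Assumption}(2)(a), a piece handled by stable convergence (Lemma~\ref{lemma for convergence of integral of continuous bounded function Young measures}), and a tail controlled by the moment bound on $\kappa$. Two small repairs are needed: in your first term you integrate over $K_R$, but Lemma~\ref{lemma for convergence of integral of continuous bounded function Young measures} requires an integrand in $L^1(0,T;C_{\text{b}}(\mathbb U))$, so replace the indicator $\mathbbm{1}_{K_R}$ by a continuous cutoff $\psi_R$ as the paper does in \eqref{eqn definition of Psi R 1}--\eqref{eqn definition of Psi R 2}; and in your tail estimate the prefactor should be $1/R$, not $R$ (use $\mathbbm{1}_{K_R^c}\le\kappa/R$ and the $\kappa^2$-bound, exactly as in the paper's computation for Term~1).
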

		We give a proof for this result in Section \ref{section existence of a solution for the relaxed control}.
		
		\begin{definition}[Weak relaxed optimal control]\label{definition weak relaxed optimal control}
			A weak martingale solution $\tilde{\pi}$ is said to be a weak relaxed optimal control to the problem \eqref{eqn problem considered original} with the cost functional $\mathcal{J}$ if
			\begin{equation}
				\mathcal{J}(\tilde{\pi}) = \Lambda.
			\end{equation}
		\end{definition}
		
		\begin{theorem}[Existence of a weak relaxed optimal control]\label{theorem existence of weak relaxed optimal control}
			Let the Assumptions \ref{assumption Main Assumption} and \ref{assumption assumption on cost functional} hold. Then the problem \eqref{eqn problem considered original} admits a weak relaxed optimal control as in Definition \ref{definition weak relaxed optimal control}.
		\end{theorem}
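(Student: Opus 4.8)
The plan is to use the direct method of the calculus of variations, exploiting the compactness of Young measures in the stable topology. First I would take a minimizing sequence of admissible solutions $\pi_n = (\Omega_n,\mathcal{F}_n,\mathbb{F}_n,\mathbb{P}_n,m_n,\lambda_n,W_n) \in \mathcal{U}(m_0,T)$, so that $\mathcal{J}(\pi_n) \to \Lambda$. The key a priori bound comes from the coercivity assumption \eqref{eqn coercivity assumption}: since $F(t,m_n(t),u) \geq C\kappa^4(t,u)$ and $\mathcal{J}(\pi_n)$ is bounded (eventually by $\Lambda + 1$), we obtain
\begin{equation*}
	\sup_n \mathbb{E}_n \int_0^T \int_{\mathbb{U}} \kappa^4(t,u) \, \lambda_n(du,dt) < \infty.
\end{equation*}
Because $\kappa$ is inf-compact, $\kappa^4$ is also inf-compact, and by Lemma \ref{lemma tightness} this gives flexible tightness of the (laws of the) $\lambda_n$ on $\mathcal{Y}(0,T:\mathbb{U})$; together with the a priori energy estimates on $m_n$ from the existence theory (Theorem \ref{theorem existence of weak martingale solution} and the estimates underlying Definition \ref{definition weak martingale solution relaxed equation}), this yields tightness of the joint laws of $(m_n, \lambda_n, W_n)$ on a suitable product space, e.g. $C([0,T];L^2) \cap L^2(0,T;H^1)_{w} \times \mathcal{Y}(0,T:\mathbb{U}) \times C([0,T];\mathbb{R})$, using the same compactness functional spaces (Aubin–Lions type / fractional Sobolev) employed in Section \ref{section existence of a solution for the relaxed control}.

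Next I would apply the Skorokhod representation theorem (in its Jakubowski form, valid on Suslin spaces, since $\mathcal{Y}(0,T:\mathbb{U})$ is metrizable Suslin) to pass to a new probability space $(\tilde\Omega,\tilde{\mathcal{F}},\tilde{\mathbb{P}})$ carrying random variables $(\tilde m_n, \tilde\lambda_n, \tilde W_n)$ with the same laws, converging $\tilde{\mathbb{P}}$-a.s. to a limit $(\tilde m, \tilde\lambda, \tilde W)$. One then identifies $\tilde\pi = (\tilde\Omega,\tilde{\mathcal{F}},\tilde{\mathbb{F}},\tilde{\mathbb{P}},\tilde m,\tilde\lambda,\tilde W)$ as a weak martingale solution of \eqref{eqn problem considered relaxed}: this is essentially a repetition of the passage to the limit carried out in the proof of Theorem \ref{theorem existence of weak martingale solution}, where the only genuinely new point is the control term $\int_0^t \int_{\mathbb{U}} L(m(s),u)\,\lambda(du,ds)$. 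Its convergence is handled exactly as in the existence proof: split into $L(\tilde m_n(s),u) - L(\tilde m(s),u)$, controlled on the level set $K_R = \{\kappa(\cdot,u)\leq R\}$ by Assumption \ref{assumption Main Assumption}(2)(a) and off $K_R$ by the uniform $\kappa^4$-bound and Chebyshev, plus the convergence of $\int_0^T\int_{\mathbb{U}} f(t,u)\,\tilde\lambda_n(du,dt) \to \int_0^T\int_{\mathbb{U}} f(t,u)\,\tilde\lambda(du,dt)$ for $f \in L^1(0,T;C_b(\mathbb{U}))$ from Lemma \ref{lemma for convergence of integral of continuous bounded function Young measures}, applied to the continuous bounded truncations of $\langle L(\tilde m(s),\cdot),\phi\rangle_{L^2}$ (using Assumption \ref{assumption Main Assumption}(2)(b)). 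Since $\tilde\lambda_n$ has the same law as $\lambda_n$, which are all admissible, and the cost is finite along the sequence, $\tilde\pi$ is admissible once we verify $\mathcal{J}(\tilde\pi)<\infty$, which follows from the lower semicontinuity argument below.

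Finally I would establish $\mathcal{J}(\tilde\pi) \leq \Lambda$, which combined with $\mathcal{J}(\tilde\pi) \geq \Lambda$ (as $\tilde\pi \in \mathcal{U}(m_0,T)$) gives optimality. For the terminal cost, $\Psi$ is lower semicontinuous on $H^1$ and $\tilde m_n(T) \to \tilde m(T)$; one needs a.s.\ convergence in $H^1$ (or along a subsequence in a weak-$H^1$ sense, upgraded using the uniform $H^1$ bound and lower semicontinuity of the norm), so Fatou's lemma gives $\mathbb{E}\,\Psi(\tilde m(T)) \leq \liminf_n \mathbb{E}\,\Psi(\tilde m_n(T))$. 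For the running cost, the essential tool is the lower semicontinuity of the functional
\begin{equation*}
	(m,\lambda) \mapsto \int_0^T \int_{\mathbb{U}} F(t,m(t),u) \, \lambda(du,dt)
\end{equation*}
jointly in $m$ (strongly in, say, $C([0,T];L^2)$ together with weak $H^1$ convergence) and $\lambda$ (stably): this is a standard Young-measure lower semicontinuity result for integrands that are nonnegative, measurable in $t$, and lower semicontinuous in $(m,u)$ (Assumption \ref{assumption assumption on cost functional}(1)), proved via truncation $F \wedge N$ and the portmanteau-type property of the stable topology, then monotone convergence in $N$. Applying this pathwise and then Fatou in $\tilde\omega$ yields $\mathbb{E}\int_0^T\int_{\mathbb{U}} F(t,\tilde m(t),u)\,\tilde\lambda(du,dt) \leq \liminf_n \mathbb{E}\int_0^T\int_{\mathbb{U}} F(t,\tilde m_n(t),u)\,\tilde\lambda_n(du,dt)$, and adding the two pieces gives $\mathcal{J}(\tilde\pi) \leq \liminf_n \mathcal{J}(\pi_n) = \Lambda$.

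\textbf{Main obstacle.} The technical heart is the joint lower semicontinuity of the running-cost functional in the pair $(m,\lambda)$: one must reconcile the \emph{strong} (in $L^2$, or merely weak in $H^1$) convergence of $m_n$ with the \emph{stable} convergence of $\lambda_n$, for an integrand $F$ that is only lower semicontinuous and possibly infinite-valued. The clean way is to first reduce, via the a.s.\ $L^2$-convergence of $m_n$ and a diagonal/subsequence argument, to treating $F(t,\tilde m(t),u)$ as a fixed (in $n$) lower semicontinuous integrand and absorbing the error $|F(t,\tilde m_n(t),u) - F(t,\tilde m(t),u)|$ — but since $F$ is only l.s.c., not continuous in $m$, this error cannot simply be bounded; instead one invokes the general theorem (e.g.\ Balder's lower-closure / Young-measure lower semicontinuity theorem) that handles exactly this situation, which is why the coercivity bound \eqref{eqn coercivity assumption} and the resulting flexible tightness are indispensable (they prevent mass of $\lambda_n$ from escaping to regions where $F$ is small). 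A secondary subtlety is ensuring the limit measure $\tilde\lambda$ is again a genuine random Young measure with the correct adaptedness and the disintegration property of Lemma \ref{lemma disintegration of Young measures}; this is inherited from the $\lambda_n$ because the Young-measure constraint $\lambda(\mathbb{U}\times D) = \mathrm{Leb}(D)$ is preserved under stable limits.
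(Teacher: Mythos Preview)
Your proposal is correct and follows essentially the same route as the paper: minimizing sequence, uniform $\kappa^4$-bound from coercivity, tightness of $(m_n,\lambda_n,W_n)$ on $X_T\times\mathcal{Y}(0,T:\mathbb{U})\times C([0,T]:\mathbb{R})$, Skorokhod representation, identification of the limit as a weak martingale solution via the arguments of Section~\ref{section existence of a solution for the relaxed control}, and finally lower semicontinuity of the relaxed cost together with Fatou's lemma. Your discussion of the joint lower semicontinuity in $(m,\lambda)$ is in fact more explicit than the paper's, which simply cites the standard Young-measure lower-closure results (Sritharan, Brze\'zniak--Serrano) at that step.
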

		Proof for the above result is given in Section \ref{section existence of a optimal control}.
		
		\dela{\coma{\begin{remark}[Relation of the \textbf{relaxed} and the \textbf{original} Equation:]\label{remark relation between relaxed and original equation}
					In particular, for a given control process $u(t),t\in[0,T]$, with values in $\mathbb{U}$, for $q_t = \delta_{u(t)}$, we obtain the original equation \eqref{eqn problem considered original}. Note that this may require some conditions \textbf{(measurability, etc.)} on the operator $L$ and possibly the process $u$. In particular, for $q_t = \delta_{u(t)}$, we obtain the original equation. Here, for each $t\in[0,T]$, $\delta_{u(t)}$ denotes the Dirac Delta measure and is $1$ at $u(t)$ and $0$ otherwise.
				\end{remark}
			}
		}

		\section{Existence of a solution for the Relaxed Control Problem}\label{section existence of a solution for the relaxed control}

		\begin{proof}[Idea of the proof of Theorem \ref{theorem existence of weak martingale solution}]
			We start by approximating the equation \eqref{eqn problem considered relaxed} in finite dimensions, using the Faedo-Galerkin approximations (see equation \eqref{eqn problem considered relaxed weak form with Ito integral}). We follow that up with establishing uniform estimates on the obtained approximations. Then using some standard compactness results, we obtain convergence of the sequence of approximates (possibly along a subsequence) to a limit, which is a solution to the problem \eqref{eqn problem considered relaxed} in the sense of Definition \ref{definition weak martingale solution relaxed equation}.
			
		\end{proof}
		
		%\adda{	Define an operator $A$ (Neumann Laplacian) on its domain $D(A)\subset L^2$ to $L^2$ as follows.
			%	\begin{align}\label{Definition of Neumann Laplacian}
				%		\begin{cases}
					%			D(A) &:= \l\{v\in H^2 : \nabla (v)(x) = 0, \ \text{for}\ x\in\partial \mathcal{O}\r\}, \\
					%			Av &:= -\Delta v, \ \text{for}\ v\in D(A).
					%		\end{cases}	
				%	\end{align}
			%
			%	It is known that the operator $A$ is a self-adjoint operator in $L^2$. We define another operator $A_1$ by
			%	\begin{equation}\label{Definition of A_1}
				%		A_1 = I_{L^2} + A,
				%	\end{equation}
			%	where $I_{L^2}$ denotes the identity operator on $L^2$. It is also known that $\l(A_1\r)^{-1}$ is compact. Also,
			%	the space $D(A_1^{\frac{1}{2}})$ equipped with the graph norm coincides with the space $H^1$.}

		Let $A = -\Delta$ be the Neumann Laplacian operator.
		% defined in \eqref{Definition of Neumann Laplacian}
		Let $\{e_i\}_{i\in\mathbb{N}}$ be an orthonormal basis of $L^2$ consisting of eigen functions of $A$ (for example refer to \cite{Evans} page 335 Theorem 1). Let $H_n$ denote the span of the first $n$ eigen functions.	
		The equation \eqref{eqn problem considered relaxed weak form with Ito integral} is approximated ($\mathbb{P}$-a.s.) with the following equation in $H_n$.
		\begin{align}\label{eqn problem considered relaxed FG approximation}
			\nonumber m_n(t) = & P_nm_0 + \int_{0}^{t} P_n\bigl(\Delta m_n(s)\bigr) + P_n\l( m_n(s) \times \Delta m_n(s) \r) - P_n\l( \l( 1 + \l| m_n(s) \r|_{\mathbb{R}^3}^2 \r) m_n(s) \r) \, ds \\
			\nonumber & + \int_{0}^{t} P_n\l(\int_{\mathbb{U}} L(m_n(s),v) \, q_s(dv) \r) \, ds\\
			& + \frac{1}{2} \int_{0}^{t} P_n\bigl( \l( m_n(s) \times h \r) \times h \bigr) + \int_{0}^{t} P_n\bigl( m_n(s) \times h + h \bigr)\,  dW(s).
		\end{align}
		
		We define the following operators.
		\begin{align}
			F_n^1 & : H_n \ni m \mapsto P_n\l(\Delta m\r),\\
			F_n^2 & : H_n \ni m \mapsto P_n\l( m \times \Delta m\r),\\
			F_n^3 & : H_n \ni m \mapsto P_n\l( \l( 1 + \l| m \r|_{\mathbb{R}^3}^2\r) m\r),\\
			F_n^4 & : H_n \ni m \mapsto P_n\bigl( \l( m \times h \r) \times h \bigr),\\
			F_n^5 & : H_n \ni m \mapsto P_n \l(\int_{\mathbb{U}} L(m,v) \, q_{\cdot}(dv)  \r) , \\
			g_n & : H_n \ni m \mapsto P_n\l( m \times h + h \r).
		\end{align}
		
		%	\adda{\begin{lemma}\label{lemma local Lipschitz continuity of Fi, g FG approximation}
				%		For each $i=1,\dots,5$, the operators $F_i$ are locally Lipschitz continuous on $H_n$. The operator $g_n$ is Lipschitz continuous on $H_n$.
				%	\end{lemma}
			%	\begin{proof}[Proof of Lemma \ref{lemma local Lipschitz continuity of Fi, g FG approximation}]
				%		For each $i=1,\dots,4$ the functions $F_n^i$ are locally Lipschitz on $H_n$. The function $g_n$ is Lipschitz on $H_n$. The proof for this is standard. We refer the reader to, for example, \cite{ZB+BG+Le_SLLBE} for a proof. The mapping $F_n^4$ is Lipschitz continuous by Assumption \ref{assumption Main Assumption}.
				%		
				%		
				%		\dela{
					%		We give a proof only for $F_n^5$. Proofs for other terms are standard (see for example \cite{ZB+BG+Le_SLLBE}).
					%		
					%		
					%		Let $v_1,v_2\in H_n$. Then we have the following $\mathbb{P}$-a.s.
					%		\begin{align*}
						%			\l| F_n^5(v_1) - F_n^5(v_2) \r|_{L^2} = & \l| \l( v_1 - v_2 \r) \int_{\mathbb{U}} L(u) \, q_t(du) \r|_{L^2}\\
						%			\leq & \adda{\l| v_1 - v_2 \r|_{L^{\infty}} \l|  \int_{\mathbb{U}} L(u) \, q_t(du) \r|_{L^2}}\\
						%			\leq & \adda{\l| v_1 - v_2 \r|_{L^{\infty}}  \int_{\mathbb{U}} \l| L(u) \r|_{L^2} \, q_t(du)} \\
						%			\leq & \l| v_1 - v_2 \r|_{L^{\infty}}  C \int_{\mathbb{U}} \kappa(t,u) \, q_t(du) \ \text{(By Assumption \ref{assumption Main Assumption})}\\
						%			\leq & C_{t,n} \l| v_1 - v_2 \r|_{L^2} .
						%		\end{align*}
					%	}
				%	\end{proof}	
			%}
		All coefficients in \eqref{eqn problem considered relaxed FG approximation} are locally Lipschitz (see, for example \cite{ZB+BG+Le_SLLBE}). For the control term $(F_n^5)$, we use Assumption \ref{assumption Main Assumption}.
		Linear growth follows from Lipschitz continuity. Hence for each $n\in\mathbb{N}$, the problem \eqref{eqn problem considered relaxed FG approximation} admits a unique solution $m_n\in C([0,T]:H_n)$.
		
		\subsection{Uniform Energy Estimates}\label{section FG uniform energy estimates}
		We now establish some uniform energy estimates for the approximations.
		\begin{lemma}\label{lemma FG bounds 1}
			There exists a constant $C>0$, which is independent of $n$ such that the following hold.
			\begin{equation}\label{eqn FG L infinity L2 bound mn}
				\mathbb{E}\sup_{t\in[0,T]} \l| m_n(t) \r|_{L^2}^2 \leq C,
			\end{equation}
			\begin{equation}\label{eqn FG L2 H1 bound mn}
				\mathbb{E}\int_{0}^{T} \l| m_n(t) \r|_{H^1}^2 \, dt \leq C,
			\end{equation}
			\begin{equation}\label{eqn FG L4 L4 bound mn}
				\mathbb{E}\int_{0}^{T} \l| m_n(t) \r|_{L^4}^4 \, dt \leq C.
			\end{equation}
			Note that the constant $C$ depends on the bound $\mathbb{E}\int_{0}^{T} \int_{\mathbb{U}} \kappa(s,v)^4 \lambda(dv,ds)$, the given function $h$, the initial data $m_0$ and the time $T$ and the domain $\mathcal{O}$.
		\end{lemma}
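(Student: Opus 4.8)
The plan is to apply the finite-dimensional It\^o formula to $m\mapsto |m|_{L^2}^2$ along the solution $m_n\in C([0,T]:H_n)$ of the It\^o form of the Galerkin system \eqref{eqn problem considered relaxed FG approximation}, to exploit the structural sign conditions of the equation, and to close the estimate with a \emph{deterministic} Gronwall inequality after taking expectations. In the evolution of $|m_n(t)|_{L^2}^2$ the drift contributes, via $P_nm_n=m_n$, integration by parts with the Neumann condition, and the pointwise identity $(a\times b)\cdot a=0$, the terms $\langle F_n^1(m_n),m_n\rangle_{L^2}=-|\nabla m_n|_{L^2}^2$, $\langle F_n^2(m_n),m_n\rangle_{L^2}=0$, and $-\langle F_n^3(m_n),m_n\rangle_{L^2}=-|m_n|_{L^2}^2-|m_n|_{L^4}^4$; these produce, with the correct sign, exactly the dissipative quantities appearing on the left of \eqref{eqn FG L2 H1 bound mn} and \eqref{eqn FG L4 L4 bound mn}. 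Since $h\in W^{2,\infty}$, the It\^o correction $|g_n(m_n)|_{L^2}^2$ of the noise and the It\^o--Stratonovich correction $\langle F_n^4(m_n),m_n\rangle_{L^2}$ are both bounded by $C(|m_n|_{L^2}^2+1)$ with $C$ depending only on $h$ and $\mathcal{O}$, and the stochastic integrand satisfies $\langle g_n(m_n),m_n\rangle_{L^2}=\langle h,m_n\rangle_{L^2}$.

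The delicate term is the control term $\langle F_n^5(m_n),m_n\rangle_{L^2}=\int_{\mathbb{U}}\langle L(m_n(s),v),m_n(s)\rangle_{L^2}\,q_s(dv)$. Using the Gelfand embedding $X\hookrightarrow L^2$ and the growth estimate in Assumption \ref{assumption Main Assumption}(1) it is dominated by $C|m_n(s)|_{L^2}^2\int_{\mathbb{U}}\kappa(s,v)\,q_s(dv)$. The key point is that $\int_{\mathbb{U}}\kappa(s,v)q_s(dv)$ should \emph{not} be carried as a random Gronwall coefficient in front of $|m_n|_{L^2}^2$; instead one uses the elementary bound $|m_n|_{L^2}^4\le|\mathcal{O}|\,|m_n|_{L^4}^4$ together with Young's and Jensen's inequalities (the latter since $q_s$ is a probability measure) to estimate it by $\frac12|m_n(s)|_{L^4}^4+C\int_{\mathbb{U}}\kappa^2(s,v)\,q_s(dv)$. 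The first summand is absorbed into the dissipative $|m_n|_{L^4}^4$ already present on the left, and $\mathbb{E}\int_0^T\int_{\mathbb{U}}\kappa^2(s,v)q_s(dv)\,ds<\infty$ by Assumption \ref{assumption Main Assumption}(1) (and is controlled by the $\kappa^4$-moment via the Cauchy--Schwarz inequality in time); after this rearrangement the Gronwall coefficient of $|m_n|_{L^2}^2$ is a pure constant.

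It remains to handle the stochastic integral $\int_0^t\langle h,m_n(s)\rangle_{L^2}\,dW(s)$, which is a genuine martingale: taking the supremum over $[0,t]$, the Burkholder--Davis--Gundy inequality and another application of Young's inequality transfer a fraction of $\mathbb{E}\sup_{r\le t}|m_n(r)|_{L^2}^2$ to the left-hand side. Collecting all terms one obtains, with a constant $C_0$ depending on $|m_0|_{L^2}$, $h$, $T$, $\mathcal{O}$ and $\mathbb{E}\int_0^T\int_{\mathbb{U}}\kappa^4\,\lambda$ but not on $n$,
\[
\mathbb{E}\sup_{r\le t}|m_n(r)|_{L^2}^2+\mathbb{E}\int_0^t|\nabla m_n(s)|_{L^2}^2\,ds+\mathbb{E}\int_0^t|m_n(s)|_{L^4}^4\,ds\le C_0+C\int_0^t\mathbb{E}\sup_{r\le s}|m_n(r)|_{L^2}^2\,ds .
\]
The deterministic Gronwall lemma then gives \eqref{eqn FG L infinity L2 bound mn}; substituting this bound back into the displayed inequality yields \eqref{eqn FG L4 L4 bound mn} and, together with $|m_n|_{H^1}^2=|m_n|_{L^2}^2+|\nabla m_n|_{L^2}^2$, also \eqref{eqn FG L2 H1 bound mn}. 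The main obstacle is exactly the control term: one must exploit the coercive cubic damping $-(1+|m|^2)m$ of the equation to absorb its growth so that the Gronwall argument can be run with a deterministic coefficient.
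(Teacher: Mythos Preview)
Your proof is correct and follows essentially the same approach as the paper: apply the It\^o formula to $\tfrac12|m_n|_{L^2}^2$, use the sign structure $\langle F_n^1+F_n^2-F_n^3,m_n\rangle_{L^2}=-|\nabla m_n|_{L^2}^2-|m_n|_{L^2}^2-|m_n|_{L^4}^4$, absorb the control term into the cubic damping via $L^4\hookrightarrow L^2$ and Young's inequality leaving only an integrable $\kappa^2$-contribution, handle the stochastic integral by BDG, and close with Gronwall. Your write-up is in fact slightly more explicit than the paper's sketch (e.g.\ noting $\langle m_n\times h,m_n\rangle_{L^2}=0$ in the martingale integrand and invoking Jensen for $q_s$), but the strategy and all the key absorptions are identical.
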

		\begin{proof}[Proof of Lemma \ref{lemma FG bounds 1}]
			We give a sketch of the proof. For detailed arguments for a similar proof, we refer the reader to \cite{ZB+BG+Le_SLLBE}.
			Consider the equality \eqref{eqn problem considered relaxed FG approximation}.		
			Applying the It\^o formula for $\l\{H_n \ni v \mapsto \frac{1}{2} \l|  v \r|_{L^2}^2 \in \mathbb{R}\r\}$ gives, for $t\in[0,T]$
			%Multiplying the equality \eqref{eqn problem considered relaxed FG approximation} by $m_n$ gives the following equality for $t\in[0,T]$.
			\begin{align}\label{eqn FG bounds lemma 1 inequality 1}
				\nonumber \frac{1}{2} \l| m_n(t) \r|_{L^2}^2 = & \frac{1}{2} \l| P_nm_0 \r|_{L^2}^2  +  \int_{0}^{t} \bigg[ \big\langle P_n\bigl(\Delta m_n(s)\bigr) + P_n\l( m_n(s) \times \Delta m_n(s) \r) \\
				\nonumber & - P_n\l( \l( 1 + \l| m_n(s) \r|_{\mathbb{R}^3}^2 \r) m_n(s) \r) , m_n(s) \big\rangle_{L^2} \bigg] \, ds \\
				\nonumber & + \int_{0}^{t} \l\langle P_n\l(\int_{\mathbb{U}}  L(m_n(s),v)  \, q_s(dv) \r) , m_n(s)\r\rangle_{L^2} \, ds \\
				\nonumber & + \frac{1}{2} \int_{0}^{t} \l\langle P_n\bigl( \l( m_n(s) \times h \r) \times h \bigr) , m_n(s)\r\rangle_{L^2} \, ds \\
				\nonumber & + \int_{0}^{t} \l\langle P_n\bigl( m_n(s) \times h + h \bigr) , m_n(s)\r\rangle_{L^2}  \,  dW(s)\\
				= & \frac{1}{2} \l| P_nm_0 \r|_{L^2}^2 + \sum_{i=1}^{4} I_i(t) .
			\end{align}
			\textbf{Calculation for $I_1$:}\\
			Using integration by parts, \cite{ZB+BG+Le_SLLBE} we can obtain the following inequality (See \cite{LE_Deterministic_LLBE}).
			\begin{align}\label{eqn bounds lemma 1 inequality for I1}
				I_1(t) =  \dela{& \coma{- \int_{0}^{t} \l| \nabla m_n(s) \r|_{L^2}^2 \, ds -  \int_{0}^{t} \l| m_n(s) \r|_{L^2}^2 \, ds -  \int_{0}^{t} \l|  m_n(s) \r|_{L^4}^4 \, ds }\\		
					= &} - \int_{0}^{t} \l| m_n(s) \r|_{H^1}^2 \, ds -  \int_{0}^{t} \l|  m_n(s) \r|_{L^4}^4 \, ds \leq 0.
			\end{align}
			\textbf{Calculation for $I_2$:}\\
			We now show calculations for the control term.
			%	 By Assumption \ref{assumption Main Assumption}, there exists a constant $C>0$ such that
			%	\begin{align}
				%	\l| \int_{\mathbb{U}} \l\langle L(m_n , v) , m_n \r\rangle_{L^2} \,  q_{\cdot}(dv) \r| \leq \int_{\mathbb{U}} \l| L(m_n , v) \r|_{L^2} \l| m_n \r|_{L^2}  \, q_{\cdot}(dv) \leq C \l| m_n \r|_{L^2}^2 \int_{\mathbb{U}} \kappa(\cdot , v) \,  q_{\cdot}(dv) .
				%	\end{align}
			Let $\varepsilon>0$. By using the continuous embedding $L^4\hookrightarrow L^2$, Young's inequality, and integrating over time, there exists a constant $C>0$ such that
			\begin{align}
				\int_{0}^{t} \l| \int_{\mathbb{U}} \l\langle L(m_n , v) , m_n(s) \r\rangle_{L^2} \, q_{s}(dv) \, ds \r| \leq \frac{\varepsilon}{2} \int_{0}^{t} \l| m_n(s) \r|_{L^4}^4 \, ds + \frac{C^4}{4\varepsilon^2} \int_{0}^{t} \int_{\mathbb{U}} \kappa^2(s , v) \, q_{s}(dv) \, ds.
			\end{align}		
			We choose $\varepsilon$ small enough ($\varepsilon = 1$ works here) such that the first term on the right hand side can be adjusted with the second term on the right hand side of \eqref{eqn bounds lemma 1 inequality for I1}, still keeping the coefficient negative.
			
			%			Note that all the terms on the left hand side of the above inequality are non-negative. In particular, the second and the third combine to give the full $H^1$ norm of $m_n$.
			
			For the last term on the right hand side of \eqref{eqn FG bounds lemma 1 inequality 1}, we have the following by the Burkh\"older-Davis-Gundy inequality. Let $\delta > 0$. Then, there exists a constant (which can depend on $\delta,h$ but not on $n\in\mathbb{N}$) such that the following holds.
			\begin{align}\label{eqn FG bounds lemma 1 inequality for noise term}
				\mathbb{E} \sup_{t\in[0,T]}  \l| \int_{0}^{t} \l\langle P_n\bigl( m_n(s) \times h + h \bigr) , m_n(s)\r\rangle_{L^2}  \,  dW(s) \r| 
				%			\nonumber \leq & C \mathbb{E}  \l( \int_{0}^{T} \l\langle P_n\bigl( m_n(s) \times h + h \bigr) , m_n(s)\r\rangle_{L^2}^2  \,  ds \r)^{\frac{1}{2}} \\
				%			\nonumber \leq & C \mathbb{E}  \l( \int_{0}^{T} \l\langle P_n\bigl( m_n(s) \times h + h \bigr) , m_n(s)\r\rangle_{L^2}^2  \,  ds \r)^{\frac{1}{2}} \\
				%			\nonumber \leq & C \mathbb{E} \sup_{t\in[0,T]} \l| m_n(t) \r|_{L^2} C(h) \\
				%			\leq  \frac{\varepsilon}{2} \mathbb{E} \sup_{t\in[0,T]} \l| m_n(t) \r|_{L^2}^2 + \frac{C^2C^2(h)}{2\varepsilon}.
				\leq  \frac{\delta}{2} \mathbb{E} \sup_{t\in[0,T]} \l| m_n(t) \r|_{L^2}^2 + \frac{C}{2\delta}.
			\end{align}
			Combining the above calculations (except for \eqref{eqn FG bounds lemma 1 inequality for noise term}) with \eqref{eqn FG bounds lemma 1 inequality 1} gives (for a suitable constant $C>0$) the following.
			\begin{align}\label{eqn FG bounds lemma 1 inequality 3}
				\nonumber \frac{1}{2} \l| m_n(t) \r|_{L^2}^2 + & \int_{0}^{t} \l| m_n(s) \r|_{H^1}^2 \, ds + \int_{0}^{t} \l|  m_n(s) \r|_{L^4}^4 \, ds \\
				\nonumber & \leq  \frac{1}{2} \l| P_n m_0 \r|_{L^2}^2 + C  \int_{0}^{t}  \int_{\mathbb{U}} \kappa^2(s,u) \, q_s(du)    \, ds + C \int_{0}^{t} \l| m_n(s) \r|_{L^2}^2 \, ds \\
				& + \l| \int_{0}^{t} \l\langle P_n\bigl( m_n(s) \times h + h \bigr) , m_n(s)\r\rangle_{L^2}  \,  dW(s) \r|.
			\end{align}	
			All the terms on the left hand side of the above inequality are non-negative. Hence the second and the third term can be neglected for now, still preserving the inequality. Taking the supremum over $[0,T]$ of the resulting inequality, followed by taking the expectation of both sides, followed by the inequality \eqref{eqn FG bounds lemma 1 inequality for noise term} gives the following.
			
			\begin{align}\label{eqn FG bounds lemma 1 inequality 4}
				\nonumber \mathbb{E} \sup_{t\in[0,T]} \l| m_n(t) \r|_{L^2}^2  \leq & \mathbb{E} \l| P_n m_0 \r|_{L^2}^2 + 2 C  \mathbb{E}  \int_{0}^{T}  \int_{\mathbb{U}} \kappa^2(s,u) \, q_s(du)    \, ds 
				+ 2 C  \mathbb{E} \int_{0}^{T} \l| m_n(s) \r|_{L^2}^2 \, ds \\
				& +  \delta \mathbb{E} \sup_{t\in[0,T]} \l| m_n(t) \r|_{L^2}^2 + \frac{C}{\delta}.
			\end{align}
			We choose $\delta \leq 1$, so that the last term on the right hand side can be balanced with the left hand side of \eqref{eqn FG bounds lemma 1 inequality 4}. In particular, we choose $\delta = \frac{1}{2}$.
			\dela{ to give
				\begin{align}\label{eqn FG bounds lemma 1 inequality 5}
					\frac{1}{2} \mathbb{E} \sup_{t\in[0,T]} \l| m_n(t) \r|_{L^2}^2 & \leq  \mathbb{E} \l| P_n m_0 \r|_{L^2}^2 + 2 C  \mathbb{E}  \int_{0}^{T}  \int_{\mathbb{U}} \kappa^2(s,u) \, q_s(du)    \, dt 
					+ 2 C  \mathbb{E} \int_{0}^{T} \l| m_n(s) \r|_{L^2}^2 \, ds + C.
				\end{align}
			}
			By Assumption \ref{assumption Main Assumption}, there exists a constant $C>0$ such that
			\begin{align}\label{eqn FG bounds lemma 1 inequality 6}
				\mathbb{E} \sup_{t\in[0,T]} \l| m_n(t) \r|_{L^2}^2 & \leq  C \l( 1 + \mathbb{E} \l| P_n m_0 \r|_{L^2}^2 \r)
				+ C  \mathbb{E} \int_{0}^{T} \l| m_n(s) \r|_{L^2}^2 \, ds + C.
			\end{align}
			Using the Gronwall inequality gives the required result \eqref{eqn FG L infinity L2 bound mn}.\\
			We now go back to the inequality \eqref{eqn FG bounds lemma 1 inequality 3}. The first and third terms are non-negative. Hence they can be neglected, still preserving the inequality. Taking the supremum over $[0,T]$, followed by taking the expectation of both sides, and then using \eqref{eqn FG L infinity L2 bound mn} gives \eqref{eqn FG L2 H1 bound mn}. The bound \eqref{eqn FG L4 L4 bound mn} can be  obtained similarly by observing that the first two terms on the left hand side of \eqref{eqn FG bounds lemma 1 inequality 3} are non-negative.
		\end{proof}

		\begin{lemma}\label{lemma FG bounds 2}
			There exists a constant $C>0$, which is independent of $n$ such that the following hold.
			\begin{equation}\label{eqn FG L infinity H1 bound mn}
				\mathbb{E}\sup_{t\in[0,T]} \l| m_n(t) \r|_{H^1}^2 \leq C,
			\end{equation}
			\begin{equation}\label{eqn FG L2 H2 bound mn}
				\mathbb{E}\int_{0}^{T} \l| \Delta m_n(t) \r|_{L^2}^2 \, dt \leq C.
			\end{equation}
			Note that the constant $C$ depends on the bound $\mathbb{E}\int_{0}^{T} \int_{\mathbb{U}} \kappa(s,v)^4 \lambda(dv,ds)$, the given $h$, the initial data $m_0$ and the time $T$.
		\end{lemma}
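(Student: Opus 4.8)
The plan is to apply the It\^o formula to the functional $H_n\ni v\mapsto\tfrac12\l|\nabla v\r|_{L^2}^2=\tfrac12\langle Av,v\rangle_{L^2}$ along the solution $m_n$ of \eqref{eqn problem considered relaxed FG approximation}, and to exploit the algebraic structure of the Landau--Lifshitz--Bloch nonlinearities. Since $m_n(t)\in H_n$ and $H_n$ is the span of eigenfunctions of $A$, one has $P_n(\Delta m_n)=\Delta m_n$ and $P_n$ commutes with $A^{1/2}$, so the $F_n^1$--term yields the dissipation $-\int_0^t\l|\Delta m_n(s)\r|_{L^2}^2\,ds$. The gyromagnetic term vanishes, $\langle Am_n,P_n(m_n\times\Delta m_n)\rangle_{L^2}=-\langle\Delta m_n,m_n\times\Delta m_n\rangle_{L^2}=0$ by $a\times b\perp b$; and, after one integration by parts using the Neumann boundary condition, the $F_n^3$--term contributes $-2\int\l|m_n\cdot\nabla m_n\r|^2-\int(1+\l|m_n\r|^2)\l|\nabla m_n\r|^2\le 0$ and may simply be discarded. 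Thus the only dissipative term to be exploited is $-\int_0^t\l|\Delta m_n(s)\r|_{L^2}^2\,ds$.

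Next I would bound the remaining drift terms and the It\^o correction. For the control term, using the embedding $X\hookrightarrow L^2$, Assumption \ref{assumption Main Assumption}(1) and Jensen's inequality,
\[
\l|\l\langle Am_n(s),P_n\l(\textstyle\int_{\mathbb{U}}L(m_n(s),v)\,q_s(dv)\r)\r\rangle_{L^2}\r|\le\varepsilon\l|\Delta m_n(s)\r|_{L^2}^2+C_\varepsilon\l|m_n(s)\r|_{L^2}^2\int_{\mathbb{U}}\kappa^2(s,v)\,q_s(dv),
\]
with $\varepsilon$ small enough to be absorbed into the dissipation; the $F_n^4$--term is dominated by $\varepsilon\l|\Delta m_n\r|_{L^2}^2+C_\varepsilon\l|m_n\r|_{L^2}^2$; and the It\^o correction equals $\tfrac12\int_0^t\l|\nabla P_n(m_n\times h+h)\r|_{L^2}^2\,ds\le C\int_0^t\bigl(\l|\nabla m_n\r|_{L^2}^2+\l|m_n\r|_{L^2}^2+1\bigr)\,ds$ since $h\in W^{2,\infty}$. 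Every resulting quantity is controlled by Lemma \ref{lemma FG bounds 1}: $\mathbb{E}\int_0^T\l|\nabla m_n\r|_{L^2}^2\,ds$ and $\mathbb{E}\sup_t\l|m_n(t)\r|_{L^2}^2$ by \eqref{eqn FG L2 H1 bound mn} and \eqref{eqn FG L infinity L2 bound mn}, and $\mathbb{E}\int_0^T\l|m_n\r|_{L^2}^2\bigl(\int_{\mathbb{U}}\kappa^2\,q_s(dv)\bigr)ds$ by the Cauchy--Schwarz inequality together with the $L^4(\Omega)$--bound $\mathbb{E}\sup_t\l|m_n(t)\r|_{L^2}^4\le C$ (obtained by the same argument as Lemma \ref{lemma FG bounds 1} using the $\kappa^4$--integrability in Assumption \ref{assumption Main Assumption}) and $\mathbb{E}\bigl(\int_0^T\int_{\mathbb{U}}\kappa^2\,q_s(dv)\,ds\bigr)^2\le T\,\mathbb{E}\int_0^T\int_{\mathbb{U}}\kappa^4\,q_s(dv)\,ds<\infty$.

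For the stochastic term $M_n(t):=\int_0^t\langle Am_n(s),P_n(m_n(s)\times h+h)\rangle_{L^2}\,dW(s)$ I would integrate by parts \emph{before} estimating; using the pointwise identity $\partial_j m_n\cdot(\partial_j m_n\times h)=0$ one gets $\l|\langle Am_n,P_n(m_n\times h+h)\rangle_{L^2}\r|\le C\l|\nabla m_n\r|_{L^2}\bigl(\l|m_n\r|_{L^2}+1\bigr)$, hence by the Burkh\"older--Davis--Gundy inequality $\mathbb{E}\sup_{t\le T}\l|M_n(t)\r|\le C\,\mathbb{E}\bigl[\sup_{s\le T}(\l|m_n(s)\r|_{L^2}+1)\bigl(\int_0^T\l|\nabla m_n\r|_{L^2}^2\,ds\bigr)^{1/2}\bigr]$, which is finite uniformly in $n$ by Lemma \ref{lemma FG bounds 1} and Cauchy--Schwarz; note that no $\Delta m_n$ and no $\sup_t\l|\nabla m_n(t)\r|_{L^2}^2$ enter here. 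Collecting the bounds, taking $\sup_{t\le T}$ and then expectations yields $\mathbb{E}\sup_{t\le T}\l|\nabla m_n(t)\r|_{L^2}^2\le C$, which with \eqref{eqn FG L infinity L2 bound mn} gives \eqref{eqn FG L infinity H1 bound mn}; taking instead expectations at $t=T$ and dropping the nonnegative term $\tfrac12\l|\nabla m_n(T)\r|_{L^2}^2$ gives $\mathbb{E}\int_0^T\l|\Delta m_n(t)\r|_{L^2}^2\,dt\le C$, i.e.\ \eqref{eqn FG L2 H2 bound mn}, and elliptic regularity for the Neumann Laplacian then yields the $L^2(0,T;H^2)$ bound as well. (Since $\mathbb{E}\int_0^T\l|\nabla m_n\r|_{L^2}^2\,ds$ is already bounded by \eqref{eqn FG L2 H1 bound mn}, no Gronwall step is needed, though one could be inserted.)

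The step I expect to be the main obstacle is the control term. Unlike the other nonlinearities it carries no sign, and by Assumption \ref{assumption Main Assumption} it is controlled in $X$ only by $\l|m_n\r|_{L^2}\kappa(\cdot,v)$; one is therefore forced to spend part of the dissipation $-\l|\Delta m_n\r|_{L^2}^2$ on it and is left with the term $\l|m_n\r|_{L^2}^2\int_{\mathbb{U}}\kappa^2\,q_s(dv)$, so closing the estimate uniformly in $n$ hinges on combining the fourth--moment a priori bound for $m_n$ with the $\kappa^4$--integrability of the Young measure. The rest is the by-now standard energy computation for the stochastic LLB equation; see \cite{ZB+BG+Le_SLLBE} and \cite{LE_Deterministic_LLBE} for the detailed manipulations.
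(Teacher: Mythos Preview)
Your proof is correct and follows the same overall strategy as the paper: apply the It\^o formula to $\tfrac12\l|\nabla m_n\r|_{L^2}^2$, exploit the sign of the LLB nonlinearities (your treatment of $F_n^2$ and $F_n^3$ matches the paper's $I_1$ exactly), absorb the control and Stratonovich-correction drift terms into the dissipation $-\int_0^t\l|\Delta m_n\r|_{L^2}^2\,ds$, and handle the stochastic integral via Burkh\"older--Davis--Gundy after one integration by parts. Two tactical differences are worth noting. For the control term the paper applies a further Young inequality, $\l|m_n\r|_{L^2}^2\int_{\mathbb{U}}\kappa^2\,q_s(dv)\le C\l|m_n\r|_{L^4}^4+C\int_{\mathbb{U}}\kappa^4\,q_s(dv)$, so that the already-established bound \eqref{eqn FG L4 L4 bound mn} applies directly and no fourth-moment estimate $\mathbb{E}\sup_t\l|m_n\r|_{L^2}^4\le C$ needs to be invoked separately; your route is correct but requires that extra (routine) upgrade of Lemma~\ref{lemma FG bounds 1}. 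Conversely, for the stochastic term the paper pulls out $\sup_t\l|\nabla m_n(t)\r|_{L^2}$ from the BDG bound, absorbs $\tfrac{\delta}{2}\,\mathbb{E}\sup_t\l|\nabla m_n\r|_{L^2}^2$ into the left-hand side, and then closes with Gronwall; your observation that after integration by parts the integrand is bounded by $C\l|\nabla m_n\r|_{L^2}\bigl(\l|m_n\r|_{L^2}+1\bigr)$ lets you control $\mathbb{E}\sup_t\l|M_n(t)\r|$ directly from Lemma~\ref{lemma FG bounds 1}, so neither the absorption nor the Gronwall step is actually needed---this is cleaner.
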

		\begin{proof}[Proof of Lemma \ref{lemma FG bounds 2}]
			
			Consider the equality \eqref{eqn problem considered relaxed FG approximation}.
			
			Applying the It\^o formula for $\l\{H_n \ni v \mapsto \frac{1}{2} \l| \nabla v \r|_{L^2}^2 \in \mathbb{R}\r\}$ %Multiplying the equality \eqref{eqn problem considered relaxed FG approximation} by $(- \Delta m_n)$ gives the following equality for $t\in[0,T]$.
			gives
			\begin{align}\label{eqn FG bounds lemma 2 inequality 1}
				\nonumber \frac{1}{2} \l| \nabla m_n(t) \r|_{L^2}^2 = & \frac{1}{2} \l| \nabla P_n m_0 \r|_{L^2}^2  +  \int_{0}^{t} \bigg[ \big\langle P_n\bigl(\Delta m_n(s)\bigr) + P_n\l( m_n(s) \times \Delta m_n(s) \r) \\
				\nonumber & - P_n\l( \l( 1 + \l| m_n(s) \r|_{\mathbb{R}^3}^2 \r) m_n(s) \r) , - \Delta m_n(s) \big\rangle_{L^2} \bigg] \, ds \\
				\nonumber & + \int_{0}^{t} \l\langle P_n\l(\int_{\mathbb{U}}  L(m_n(s),v)  \, q_s(dv) \r) , \Delta m_n(s)\r\rangle_{L^2} \, ds \\
				\nonumber & + \frac{1}{2} \int_{0}^{t} \l\langle P_n\bigl( \l( m_n(s) \times h \r) \times h \bigr) , - \Delta m_n(s)\r\rangle_{L^2} \, ds \\
				\nonumber & + \int_{0}^{t} \l\langle P_n\bigl( m_n(s) \times h + h \bigr) , - \Delta m_n(s) \r\rangle_{L^2}  \,  dW(s)\\
				= & \frac{1}{2} \l| P_nm_0 \r|_{L^2}^2 + \sum_{i=1}^{4} I_i(t) .
			\end{align}
			\textbf{Calculation for $I_1$:}\\
			For $I_1$, we have the following inequality (see \cite{LE_Deterministic_LLBE}) for $t\in [0,T]$.
			\begin{align}\label{eqn bounds lemma 2 calculation for I1}
				\nonumber I_1(t) = & - \int_{0}^{t} \l| \Delta m_n(s) \r|_{L^2}^2 \, ds - \int_{0}^{t} \l| \nabla m_n(s) \r|_{L^2}^2 \, ds \\
				& - 2 \int_{0}^{t} \l\langle m_n(s) , \nabla m_n(s) \r\rangle_{L^2}^2 \, ds
				- \int_{0}^{t} \l| m_n(s) \r|_{\mathbb{R}^3}^2 \l| \nabla m_n(s) \r|_{L^2}^2 \, ds \leq 0.
			\end{align}
			\textbf{Calculations for $I_2$:}\\
			Let $\varepsilon>0$. By Young's inequality, the continuous embedding $L^4\hookrightarrow L^2$ and the Assumption \ref{assumption Main Assumption}, we can show the following inequality for some constant $C>0$.		
			\begin{equation}
				I_2(t) \leq  \frac{C}{\varepsilon^2} \int_{0}^{t} \l| m_n(s) \r|_{L^4}^4 \, ds +  \int_{0}^{t}  \int_{\mathbb{U}} \kappa^4(s,v) \, q_s(du)  \, ds + \frac{\varepsilon}{2} \int_{0}^{t} \l|\Delta m_n(s) \r|_{L^2}^2 \, ds .
			\end{equation}
			We later choose this $\varepsilon$ small enough so that the term with $\varepsilon$ can be balanced with the $(\Delta m_n)$ term from \eqref{eqn bounds lemma 2 calculation for I1}, still keeping the coefficient negative. For example, $\varepsilon = 1$ works here.\\
			\textbf{Calculation for $I_3$:}\\
			There exists a constant $C(h)>0$ such that
			\begin{align}
				I_3(t) \leq C(h) \int_{0}^{t} \l| m_n(s) \r|_{L^2}^2 \, ds + \frac{1}{2} \int_{0}^{t} \l| \nabla m_n(s) \r|_{L^2}^2 \, ds.
			\end{align}		
			Combining the above calculations with \eqref{eqn FG bounds lemma 2 inequality 1} gives, for $\varepsilon = \frac{1}{2}$ (and an appropriate constant $C>0$),
			%		\begin{align}\label{eqn FG bounds lemma 2 inequality 2}
				%			\nonumber \frac{1}{2} \l| \nabla m_n(t) \r|_{L^2}^2 + \int_{0}^{t} \l| \Delta m_n(s) \r|_{L^2}^2 \, ds 
				%			\leq & \frac{1}{2} \l| \nabla P_n m_0 \r|_{L^2}^2
				%			+ C 
				%			+ \frac{2}{\varepsilon^2} \int_{0}^{t} \l| m_n(s) \r|_{L^4}^4 \, ds \\
				%			\nonumber & + C \int_{0}^{t}  \int_{\mathbb{U}} \kappa^4(s,v) \, q_s(du)  \, ds
				%			+ \varepsilon \int_{0}^{t} \l|\Delta m_n(s) \r|_{L^2}^2 \, ds \\
				%			\nonumber & + C(h,\varepsilon) \int_{0}^{t} \l| m_n \r|_{L^2}^2 \, ds 
				%			+ \frac{\varepsilon}{2} \int_{0}^{t} \l| \nabla m_n \r|_{L^2}^2 \, ds \\
				%			& + \int_{0}^{t} \l\langle P_n\bigl( m_n(s) \times h + h \bigr) , - \Delta m_n(s) \r\rangle_{L^2}  \,  dW(s).
				%		\end{align}
			%		We choose $\varepsilon>0$ small enough so that the corresponding terms (containing $\Delta m_n$) can be balanced on the left hand side, still keeping the coeffieicnt positive. In particular, we choose $\varepsilon = \frac{1}{2}$.
			\begin{align}\label{eqn FG bounds lemma 2 inequality 3}
				\nonumber \l| \nabla m_n(t) \r|_{L^2}^2 + \int_{0}^{t} \l| \Delta m_n(s) \r|_{L^2}^2 \, ds 
				\leq & C \l(  1 +  \l| m_0 \r|_{H^1}^2  \r)
				+ C \int_{0}^{t} \l| m_n(s) \r|_{L^4}^4 \, ds +  \int_{0}^{t}  \int_{\mathbb{U}} \kappa^4(s,v) \, q_s(du)  \, ds \\
				\nonumber & + C \int_{0}^{t} \l| m_n(s) \r|_{L^2}^2 \, ds 
				+ C \int_{0}^{t} \l| \nabla m_n(s) \r|_{L^2}^2 \, ds \\
				& + \l| \int_{0}^{t} \l\langle P_n\bigl( m_n(s) \times h + h \bigr) , - \Delta m_n(s) \r\rangle_{L^2}  \,  dW(s) \r|.
			\end{align}
			\dela{The second term on the left hand side of the above inequality is non-negative. Hence it can be neglected for now, still preserving the inequality. Taking the supremum over $[0,T]$ followed by taking the expectation of both sides gives
				\begin{align}\label{eqn FG bounds lemma 2 inequality 4}
					\nonumber \mathbb{E} \sup_{t\in[0,T]} \l| \nabla m_n(t) \r|_{L^2}^2 \leq & C \l(  1 + \mathbb{E} \l| m_0 \r|_{H^1}^2  \r)
					+ C \mathbb{E} \int_{0}^{T} \l| m_n(s) \r|_{L^4}^4 \, ds \\
					\nonumber & + C \int_{0}^{t}  \int_{\mathbb{U}} \kappa^4(s,v) \, q_s(du)  \, ds + C \mathbb{E} \int_{0}^{T} \l| m_n \r|_{L^2}^2 \, ds 
					+ C \mathbb{E} \int_{0}^{T} \l| \nabla m_n \r|_{L^2}^2 \, ds \\
					& + \mathbb{E} \sup_{t\in[0,T]} \l| \int_{0}^{t} \l\langle P_n\bigl( m_n(s) \times h + h \bigr) , - \Delta m_n(s) \r\rangle_{L^2}  \,  dW(s) \r|.
				\end{align}
			}
			For the last term, we have the following by the Burkh\"older-Davis-Gundy inequality.\\
			Let $\delta>0$. There exists a constant $C>0$ independent of $n\in\mathbb{N}$ such that
			\begin{align}
				\nonumber \mathbb{E} \sup_{t\in[0,T]} \l| \int_{0}^{t} \l\langle P_n\bigl( m_n(s) \times h + h \bigr) , - \Delta m_n(s) \r\rangle_{L^2}  \,  dW(s) \r|
				\nonumber  \leq & \frac{\delta}{2} \mathbb{E}   \sup_{t\in[0,T]} \l| \nabla m_n(t) \r|_{L^2}^2 \\
				& + \frac{2}{\delta} C \mathbb{E} \int_{0}^{T} \l| m_n(s) \r|_{L^2}^2  \, ds + C.
			\end{align}
			%Using the above inequality in \eqref{eqn FG bounds lemma 2 inequality 4} and multiplying by a suitable constant gives
			Note that the second term on the left hand side of \eqref{eqn FG bounds lemma 2 inequality 3} is non-negative, and hence can be neglected for now without changing the inequality. Taking supremum over $[0,T]$ followed by taking the expectation of both sides and using the estimates established above gives (for some constant $C>0$)
			\begin{align}\label{eqn FG bounds lemma 2 inequality 5}
				\nonumber \mathbb{E} \sup_{t\in[0,T]} \l| \nabla m_n(t) \r|_{L^2}^2 \leq & C \l(  1 + \mathbb{E} \l| m_0 \r|_{H^1}^2  \r)
				+ C \mathbb{E} \int_{0}^{T} \l| m_n(s) \r|_{L^4}^4 \, ds + C \mathbb{E} \int_{0}^{T} \l| \int_{\mathbb{U}} \kappa(\cdot,u) \, q_s(du) \r|_{L^4}^4 \, ds \\
				& + C \mathbb{E} \int_{0}^{T} \l| m_n \r|_{L^2}^2 \, ds 
				+ C \mathbb{E} \int_{0}^{T} \l| \nabla m_n \r|_{L^2}^2 \, ds + \frac{\delta}{2} \mathbb{E}   \sup_{t\in[0,T]} \l| \nabla m_n(t) \r|_{L^2}^2 .
			\end{align}
			We choose $\delta$ small enough ($\delta = 1$ works here) so that the last term on the right hand side of the above inequality can be adjusted with the left hand side, still keeping the coefficient positive.
			By the inequalities \eqref{eqn FG L infinity L2 bound mn}, \eqref{eqn FG L4 L4 bound mn} in Lemma \ref{lemma FG bounds 1}, along with the Assumption \ref{assumption Main Assumption} (in particular on the initial data $m_0$ and the operator $L$), there exists a constant $C>0$ such that the following inequality holds.
			\begin{align}\label{eqn FG bounds lemma 2 inequality 6}
				\nonumber \mathbb{E} \sup_{t\in[0,T]} \l| \nabla m_n(t) \r|_{L^2}^2 \leq & C \l( 1 +  \mathbb{E} \int_{0}^{T} \l| \nabla m_n(s) \r|_{L^2}^2 \, ds \r) .
			\end{align}
			Applying the Gronwall inequality, and using the bound \eqref{eqn FG L infinity L2 bound mn} gives \eqref{eqn FG L infinity H1 bound mn}. The bound \eqref{eqn FG L2 H2 bound mn} can be obtained similar to the bound \eqref{eqn FG L2 H1 bound mn}, using the inequality \eqref{eqn FG bounds lemma 2 inequality 3}.
		\end{proof}
		
		\begin{lemma}\label{lemma FG bounds 3}
			There exists a constant $C>0$ such that for $\alpha\in(0,\frac{1}{2}),\ p\geq 2$, with $\alpha p > 1$, we have the following inequality.
			\begin{equation}\label{eqn FG W alpha p bound mn}
				\mathbb{E} \l| m_n \r|_{W^{\alpha,p}(0,T:(H^1)^\p)}^2 \leq C.
			\end{equation}
		\end{lemma}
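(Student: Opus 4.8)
The plan is to split $m_n$ into its three natural constituents and to bound each in $W^{\alpha,p}(0,T:(H^1)^\p)$ separately, feeding in the uniform energy estimates of Lemmas~\ref{lemma FG bounds 1} and~\ref{lemma FG bounds 2}, together with their higher--moment analogues (obtained by exactly the same It\^o/Burkh\"older--Davis--Gundy arguments, using higher powers of the $L^2$- and $H^1$-norms as test functionals). Write $V:=(H^1)^\p$ and
\begin{gather*}
m_n(t)=P_n m_0+A_n(t)+M_n(t),\\
A_n(t):=\int_0^t\Phi_n(s)\,ds,\qquad M_n(t):=\int_0^t g_n\bigl(m_n(s)\bigr)\,dW(s),
\end{gather*}
with $\Phi_n:=F_n^1(m_n)+F_n^2(m_n)-F_n^3(m_n)+\tfrac12 F_n^4(m_n)+F_n^5(m_n)$. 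Note first that $\alpha<\tfrac12$ together with $\alpha p>1$ already force $p>2$; fix such $\alpha$ and $p$. Since $P_n m_0$ does not depend on $t$, one has $\|P_n m_0\|_{W^{\alpha,p}(0,T:V)}=T^{1/p}\|P_n m_0\|_{V}\le C\|m_0\|_{L^2}$, so it remains to treat $A_n$ and $M_n$.

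For the drift part I would show that $\Phi_n$ is bounded in $L^p(0,T:V)$ uniformly in $n$, i.e. $\sup_n\mathbb{E}\int_0^T\|\Phi_n(s)\|_V^p\,ds<\infty$. Since $A_n(t)=\int_0^t\Phi_n(s)\,ds$, this gives $A_n\in W^{1,p}(0,T:V)\hookrightarrow W^{\alpha,p}(0,T:V)$ (trivially, as $\alpha<1$ and the interval is bounded), whence $\mathbb{E}\|A_n\|_{W^{\alpha,p}(0,T:V)}^2\le C\,\mathbb{E}\|A_n\|_{W^{1,p}(0,T:V)}^2\le C\bigl(\mathbb{E}\int_0^T\|\Phi_n(s)\|_V^p\,ds\bigr)^{2/p}\le C$. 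The term--by--term bounds are: $\|F_n^1(m_n)\|_V\le C\|m_n\|_{H^1}$ (using $\|\Delta w\|_V\le C\|w\|_{H^1}$ and the uniform boundedness of $P_n$ on $V$), controlled by \eqref{eqn FG L infinity H1 bound mn}; $\|F_n^4(m_n)\|_{L^2}\le C(h)\|m_n\|_{L^2}$, controlled by \eqref{eqn FG L infinity L2 bound mn}; for $F_n^5(m_n)$, Assumption~\ref{assumption Main Assumption}(1) gives $\bigl\|\int_{\mathbb{U}}L(m_n,v)\,q_s(dv)\bigr\|_X\le C\|m_n\|_{L^2}\bigl(\int_{\mathbb{U}}\kappa^2(s,v)\,q_s(dv)\bigr)^{1/2}$, and the chain $X\hookrightarrow L^2\hookrightarrow V$ together with the $\kappa$--moment bounds in Assumption~\ref{assumption Main Assumption} closes that estimate; for the cubic term $F_n^3(m_n)$ and the gyromagnetic term $F_n^2(m_n)$ one uses, for $d\le3$, the embedding $H^1\hookrightarrow L^6$, the dual embedding $L^{3/2}\hookrightarrow V$, the identity $\langle m_n\times\Delta m_n,\phi\rangle=-\langle m_n\times\nabla m_n,\nabla\phi\rangle$, and an interpolation converting the remaining $L^\infty(0,T:H^1)$, $L^2(0,T:H^2)$ and $L^4(0,T:L^4)$ norms into the required bound, invoking here the higher--moment versions of \eqref{eqn FG L infinity H1 bound mn}, \eqref{eqn FG L2 H2 bound mn} and \eqref{eqn FG L4 L4 bound mn}.

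For the martingale part I would invoke the classical fractional--regularity estimate for stochastic integrals: if $\sigma$ is progressively measurable with values in a Hilbert space $H$, $p\ge2$ and $\mathbb{E}\int_0^T\|\sigma(s)\|_H^p\,ds<\infty$, then $\int_0^{\cdot}\sigma\,dW\in L^p\bigl(\Omega;W^{\alpha,p}(0,T:H)\bigr)$ for every $\alpha\in(0,\tfrac12)$, with $\mathbb{E}\bigl\|\int_0^{\cdot}\sigma\,dW\bigr\|_{W^{\alpha,p}(0,T:H)}^p\le C\,\mathbb{E}\int_0^T\|\sigma(s)\|_H^p\,ds$. Applying this with $\sigma=g_n(m_n)$ and $H=L^2\hookrightarrow V$, and using $\|g_n(m_n(s))\|_{L^2}\le\|h\|_{L^\infty}\|m_n(s)\|_{L^2}+\|h\|_{L^2}\le C\bigl(1+\|m_n(s)\|_{L^2}\bigr)$ together with the $p$--th moment version of \eqref{eqn FG L infinity L2 bound mn}, one obtains $\mathbb{E}\int_0^T\|g_n(m_n(s))\|_{L^2}^p\,ds\le C$, hence $\mathbb{E}\|M_n\|_{W^{\alpha,p}(0,T:V)}^2\le\bigl(\mathbb{E}\|M_n\|_{W^{\alpha,p}(0,T:V)}^p\bigr)^{2/p}\le C$. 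Summing the three contributions yields \eqref{eqn FG W alpha p bound mn}.

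The step I expect to be the main obstacle is the uniform $L^p(0,T:V)$ bound on the two genuinely nonlinear drift terms $m_n\times\Delta m_n$ and $(1+|m_n|^2)m_n$: a priori these lie only in $L^1$ in time, so pushing them into $L^p(0,T:V)$ forces one to interpolate between essentially all the energy norms of Lemmas~\ref{lemma FG bounds 1}--\ref{lemma FG bounds 2}, and hence to rely on their higher--moment versions; verifying that all the resulting constants stay independent of $n$ is the technical heart of the proof, while the remaining steps are routine.
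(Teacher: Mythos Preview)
Your proposal follows the same overall strategy as the paper: split $m_n$ into constant, drift, and martingale parts, control the drift via a $W^{1,\cdot}$ bound on the time integral, and handle the stochastic integral with the Flandoli--Gatarek fractional regularity lemma (Lemma~2.1 in \cite{Flandoli_Gatarek}), which is exactly what the paper invokes.

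The one noteworthy difference is in the drift. You aim for $\Phi_n\in L^p(0,T:V)$, which forces you to appeal to higher--moment analogues of Lemmas~\ref{lemma FG bounds 1}--\ref{lemma FG bounds 2}, and you correctly flag the nonlinear terms $m_n\times\Delta m_n$ and $(1+|m_n|^2)m_n$ as the bottleneck. The paper instead only establishes $L^2$-in-time square integrability of the drift (it spells this out for the control term and defers the rest to \cite{ZB+BG+Le_SLLBE}, \cite{ZB+BG+TJ_Weak_3d_SLLGE}), then uses the continuous embedding $W^{1,2}\bigl(0,T:(H^1)'\bigr)\hookrightarrow W^{\alpha,p}\bigl(0,T:(H^1)'\bigr)$, valid because $\alpha<\tfrac12$. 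This sidesteps the direct $L^p$ bound and minimizes the moment requirements on the drift. Your route is correct but does more work than necessary; the paper's shortcut is worth knowing.
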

		\begin{proof}[Proof of Lemma \ref{lemma FG bounds 3}]
			The proof can be given following \cite{ZB+BG+Le_SLLBE}, \cite{ZB+BG+TJ_Weak_3d_SLLGE}. We give arguments only for the control term.	It suffices to show that the control term is square integrable.
			\begin{align}
				\mathbb{E} \int_{0}^{T} \l| \int_{\mathbb{U}} L(m,v) \, q_s(dv) \r|_{L^2}^2 \, dt \leq C \l[ \mathbb{E} \int_{0}^{T} \l| m_n(s) \r|_{L^4}^4 \, ds \r]^{\frac{1}{2}} \l[ \mathbb{E} \int_{0}^{T} \int_{\mathbb{U}} \kappa^4(s,v) \, ds \r]^{\frac{1}{2}} < \infty.
			\end{align}
			Using the bound \eqref{eqn FG L4 L4 bound mn}, along with the Assumption \ref{assumption Main Assumption} gives the finiteness of the right hand side of the above inequality. Combining the above estimates with the compact embeddings \cite{Simon_Compact_Sets}
			\begin{equation}
				L^2 \hookrightarrow (H^1)^\p\ \text{and}\ W^{1,2}(0,T:(H^1)^\p) \dela{L^2)} \hookrightarrow W^{\alpha , p}(0,T:(H^1)^\p )
			\end{equation}
			
			gives the required regularity for the integral. For the stochastic integral, we refer the reader to Lemma 2.1 from \cite{Flandoli_Gatarek}.
		\end{proof}
		
		\subsection{Compactness}\label{section compactness}
		For the purpose of the section, let 
		$$X_T := L^2(0,T:H^1) \cap L^4(0,T:L^4) \cap  C([0,T]:(H^1)^\p).$$
		
		\begin{lemma}\label{lemma FG tightness}
			The sequence of laws of $\l( m_n , \lambda , W \r)$ is tight on the space\\
			$X_T \times \mathcal{Y}(0,T:\mathbb{U}) \times C([0,T]:\mathbb{R})$.
			\dela{
				We have the following.
				\begin{enumerate}
					\item Let $\mathcal{L}(m_n)$ denote the law of the process $m_n,n\in\mathbb{N}$. Then the sequence $\l\{ \mathcal{L}(m_n) \r\}_{n\in\mathbb{N}}$ is tight on the space $L^2(0,T:H^1) \cap L^4(0,T:L^4)\cap C([0,T] X^{-\beta})$.
					\item The measures $\lambda,W$ are tight on the spaces $\mathcal{Y}(0,T:\mathbb{U}),\ C([0,T]:\mathbb{R})$ respectively.
				\end{enumerate}
			}
		\end{lemma}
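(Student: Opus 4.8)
The plan is to establish tightness factor-by-factor on the product space $X_T \times \mathcal{Y}(0,T:\mathbb{U}) \times C([0,T]:\mathbb{R})$, since tightness of a sequence of laws on a finite product of Polish (or metrizable Suslin) spaces follows from tightness of each marginal. The Wiener component is trivial: the law of $W$ is fixed (it does not depend on $n$), so $\{\mathcal{L}(W)\}$ is automatically tight. For the Young-measure component, the law of $\lambda$ is likewise the same for all $n$, so $\{\mathcal{L}(\lambda)\}$ is trivially tight; alternatively, one invokes Lemma \ref{lemma tightness}, noting that by Assumption \ref{assumption Main Assumption} we have $\mathbb{E}\int_0^T\int_{\mathbb{U}}\kappa^2(s,v)\,\lambda(dv,ds)<\infty$, so the support of the law of $\lambda$ (up to a set of small probability, via Chebyshev) is flexibly tight because $\kappa$ is inf-compact. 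The real work is the $X_T$-component, i.e.\ tightness of $\{\mathcal{L}(m_n)\}$ on $L^2(0,T:H^1)\cap L^4(0,T:L^4)\cap C([0,T]:(H^1)^\p)$.

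For the $X_T$-marginal I would use the standard compactness scheme (Flandoli--G\c{a}tarek \cite{Flandoli_Gatarek}, Brze\'zniak--Goldys--Le \cite{ZB+BG+Le_SLLBE}). First, from Lemma \ref{lemma FG bounds 2} we have the uniform bounds $\mathbb{E}\sup_{t\in[0,T]}|m_n(t)|_{H^1}^2\le C$ and $\mathbb{E}\int_0^T|m_n(t)|_{H^2}^2\,dt\le C$, and from Lemma \ref{lemma FG bounds 3} the fractional bound $\mathbb{E}|m_n|_{W^{\alpha,p}(0,T:(H^1)^\p)}^2\le C$ for suitable $\alpha,p$. The key compact embeddings are: $H^2\hookrightarrow\hookrightarrow H^1$ (Rellich--Kondrachov, since $\mathcal{O}$ is bounded with smooth boundary) together with $H^1\hookrightarrow(H^1)^\p$, which via the Aubin--Lions--Simon lemma \cite{Simon_Compact_Sets} gives that bounded sets of $L^2(0,T:H^2)\cap W^{\alpha,p}(0,T:(H^1)^\p)$ are relatively compact in $L^2(0,T:H^1)$; similarly, using $H^1\hookrightarrow\hookrightarrow L^4$ in dimensions $d=1,2,3$ (for $d=3$, $H^1\hookrightarrow L^4$ compactly into $L^4$ by Sobolev, with $4 < 6 = 2d/(d-2)$), one obtains relative compactness in $L^4(0,T:L^4)$; and the embedding $W^{\alpha,p}(0,T:(H^1)^\p)\hookrightarrow\hookrightarrow C([0,T]:(H^1)^\p)$ (valid since $\alpha p>1$) handles the last factor. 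Then for each $\varepsilon>0$ one chooses $R=R(\varepsilon)$ via Chebyshev's inequality so that the ball of radius $R$ in $L^2(0,T:H^2)\cap W^{\alpha,p}(0,T:(H^1)^\p)\cap L^\infty(0,T:H^1)$ carries $m_n$ with probability at least $1-\varepsilon$ uniformly in $n$; the image of this ball under the above compact embeddings is a compact subset $K_\varepsilon\subset X_T$ with $\mathbb{P}(m_n\in K_\varepsilon)\ge 1-\varepsilon$ for all $n$, which is precisely tightness of $\{\mathcal{L}(m_n)\}$ on $X_T$.

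Finally I would assemble the pieces: given $\varepsilon>0$, pick a compact $K_\varepsilon^1\subset X_T$, a compact $K_\varepsilon^2\subset\mathcal{Y}(0,T:\mathbb{U})$, and a compact $K_\varepsilon^3\subset C([0,T]:\mathbb{R})$ each capturing the respective marginal with probability at least $1-\varepsilon/3$; then $K_\varepsilon:=K_\varepsilon^1\times K_\varepsilon^2\times K_\varepsilon^3$ is compact in the product space and $\mathbb{P}\bigl((m_n,\lambda,W)\in K_\varepsilon\bigr)\ge 1-\varepsilon$ for all $n$, giving the claim. The main obstacle — really the only delicate point — is verifying the hypotheses of the Aubin--Lions--Simon lemma in the fractional-in-time setting and checking that the Sobolev embedding $H^1\hookrightarrow\hookrightarrow L^4$ is genuinely \emph{compact} in dimension $d=3$ (it is, but this is where the dimension restriction $d\le 3$ is used); everything else is bookkeeping with the a priori bounds already established in Lemmas \ref{lemma FG bounds 1}--\ref{lemma FG bounds 3}.
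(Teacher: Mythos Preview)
Your proposal is correct and follows essentially the same route as the paper: factor-by-factor tightness, with the $m_n$-marginal handled via the compact embedding $L^{\infty}(0,T:H^1)\cap L^{2}(0,T:H^2)\cap W^{\alpha,p}(0,T:(H^1)^\p)\hookrightarrow X_T$ (Simon \cite{Simon_Compact_Sets}) combined with the uniform bounds of Lemmas \ref{lemma FG bounds 1}--\ref{lemma FG bounds 3} and Chebyshev, the $\lambda$-marginal via Lemma \ref{lemma tightness} and Assumption \ref{assumption Main Assumption} (or, as you also note, simply because the law is fixed), and the $W$-marginal trivially. Your write-up is in fact more explicit than the paper's about the individual compact embeddings and the final product-set assembly, but the underlying argument is identical.
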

		\begin{proof}[Proof of Lemma \ref{lemma FG tightness}]
			The following embedding is compact \cite{Simon_Compact_Sets} for $\alpha,p$ as in Lemma \ref{lemma FG bounds 3} and $\alpha p >1$.
			$$Y_T := L^{\infty}(0,T:H^1) \cap L^{2}(0,T:H^2) \cap W^{\alpha , p}(0,T:(H^1)^\p) \hookrightarrow X_T.$$
			Using the bounds in Lemmas \ref{lemma FG bounds 1}, \ref{lemma FG bounds 2} and \ref{lemma FG bounds 3}, we can show that the laws of the processes $m_n,n\in\mathbb{N}$ are concentrated in a ball in the space $Y_T$. The compact embedding mentioned above concludes the tightness for the laws of $m_n,n\in\mathbb{N}$.
			Tightness for the law of $\lambda$ follows from Lemma \ref{lemma tightness} and the Assumption \ref{assumption Main Assumption}.
			Tightness for the law of $W$ follows from the fact that $W$ is a Wiener process and $C([0,T]:\mathbb{R})$ is a Radon space.
		\end{proof}
		
		\subsection{Use of Skorohod Theorem}\label{section Use of Skorohod Theorem}
		Using the Skorohod Theorem, we have the following lemma.
		\begin{lemma}\label{lemma use of Skorohod theorem}
			There exists a probability space $\l(\tilde{\Omega} , \tilde{\mathcal{F}} , \tilde{\mathbb{P}}\r)$, along with $X_T$-valued random variables $\tilde{m}_n,\tilde{m}, n\in\mathbb{N}$ on the said probability space, along with a sequence of random Young measures $\tilde{\lambda}_n, \tilde{\lambda},n\in\mathbb{N}$ on $\mathbb{U}$ and processes $\tilde{W}_n,\tilde{W}, n\in\mathbb{N}$ such that the following hold.
			
			\begin{equation}
				\l( \tilde{m}_n , \tilde{\lambda}_n , \tilde{W}_n \r) \stackrel{d}{=} \l( m_n , \lambda , W \r),\ \forall n\in\mathbb{N}.
			\end{equation}
			\begin{equation}
				\l( \tilde{m}_n , \tilde{\lambda}_n , \tilde{W}_n \r) \to \l( \tilde{m} , \tilde{\lambda} , \tilde{W} \r),\ \tilde{\mathbb{P}}\text{-a.s. in}\ X_T \times \mathcal{Y}(0,T:\mathbb{U}) \times C([0,T] : \mathbb{R}).
			\end{equation}

			\dela{
				
				\item $\tilde{m}_n$ and $\tilde{m}_n$ have the same laws on the space $L^2(0,T:H^1) \cap L^4(0,T:L^4) \cap  C([0,T] X^{-\beta})$, for each $n\in\mathbb{N}$.
				\item 
				\begin{equation}
					\tilde{m}_n \to \tilde{m} \text{ in } L^2(0,T:H^1) \cap L^4(0,T:L^4) \cap  C([0,T] X^{-\beta})\ \mathbb{P}^\p-a.s.
				\end{equation}
				\item $\tilde{\lambda}_n$ has the same law as that of $\lambda$ for each $n\in\mathbb{N}$ on the space of Young measures $\mathcal{Y}(0,T:\mathbb{U})$.
				\item 
				\begin{equation}
					\tilde{\lambda}_n \to \tilde{\lambda} \text{ stably in } \mathcal{Y}(0,T:\mathbb{U})\ \mathbb{P}^\p-a.s.
				\end{equation}
				\item $\tilde{W}_n$ has the same law as that of $W$ for each $n\in\mathbb{N}$ on the space of $C([0,T]:\mathbb{R})$.
				\item 
				\begin{equation}
					\tilde{W}_n \to \tilde{W} \text{ in } C([0,T]:\mathbb{R})\ \mathbb{P}^\p-a.s..
				\end{equation}

			}

		\end{lemma}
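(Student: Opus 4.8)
The plan is to apply the Skorohod representation theorem, in the form valid for metrizable Suslin spaces, to the family of laws whose tightness was established in Lemma \ref{lemma FG tightness}. First I would record that the three factors are well behaved: $X_T$ and $C([0,T]:\mathbb{R})$ are Polish, while $\mathcal{Y}(0,T:\mathbb{U})$ with the stable topology is metrizable Suslin by the Proposition recalled in Section \ref{section some notations, preliminaries}. Consequently the product $X_T \times \mathcal{Y}(0,T:\mathbb{U}) \times C([0,T]:\mathbb{R})$ is metrizable Suslin, and in particular every Borel probability measure on it is Radon.

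Next, by Lemma \ref{lemma FG tightness} the laws of $\l( m_n , \lambda , W \r)$, $n\in\mathbb{N}$, form a tight family on this product; hence, passing to a subsequence which I do not relabel, they converge weakly to some probability measure $\mu$. Applying the Skorohod representation theorem on Suslin spaces (see \cite{ZB+RS} and the references therein) then produces a probability space $\l( \tilde{\Omega} , \tilde{\mathcal{F}} , \tilde{\mathbb{P}} \r)$ carrying random variables $\l( \tilde{m}_n , \tilde{\lambda}_n , \tilde{W}_n \r)$ and $\l( \tilde{m} , \tilde{\lambda} , \tilde{W} \r)$ with values in the above product, such that the joint law of $\l( \tilde{m}_n , \tilde{\lambda}_n , \tilde{W}_n \r)$ coincides with that of $\l( m_n , \lambda , W \r)$ for every $n$, the law of $\l( \tilde{m} , \tilde{\lambda} , \tilde{W} \r)$ equals $\mu$, and $\l( \tilde{m}_n , \tilde{\lambda}_n , \tilde{W}_n \r) \to \l( \tilde{m} , \tilde{\lambda} , \tilde{W} \r)$ holds $\tilde{\mathbb{P}}$-a.s.; this is exactly the content of the two displayed formulas. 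I would then check that the limit objects have the claimed type: $\tilde{m}$ is $X_T$-valued by construction; $\tilde{\lambda}$ is a random Young measure, since $\mathcal{Y}(0,T:\mathbb{U})$ is stably closed and an a.s.\ limit of measurable maps is measurable, and $\tilde{\lambda} \stackrel{d}{=} \lambda$ because $\tilde{\lambda}_n \stackrel{d}{=} \lambda$ for all $n$ while $\tilde{\lambda}_n \to \tilde{\lambda}$ stably $\tilde{\mathbb{P}}$-a.s.; and $\tilde{W}$ has the law of a real-valued Wiener process, being an a.s.\ limit of processes each distributed as $W$.

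The step I expect to be the main obstacle is the invocation of the generalized Skorohod theorem itself: $\mathcal{Y}(0,T:\mathbb{U})$ is in general not Polish, so the classical version is unavailable and one must use its extension to Suslin spaces; moreover one must be careful that the full \emph{joint} law of the triple $\l( \tilde{m}_n , \tilde{\lambda}_n , \tilde{W}_n \r)$ is preserved, not merely its three marginals, since it is precisely this coupling that will later permit the passage to the limit in \eqref{eqn problem considered relaxed FG approximation} on the new space. A secondary point is verifying that the limit $\tilde{\lambda}$ is jointly measurable in the sense needed for Lemma \ref{lemma disintegration of Young measures} to apply to it, which follows from the a.s.\ stable convergence together with the Suslin property of the relevant spaces.
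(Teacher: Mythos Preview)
Your proposal is correct and follows essentially the same approach as the paper, which provides no explicit proof and simply introduces the lemma with the sentence ``Using the Skorohod Theorem, we have the following lemma.'' Your write-up supplies precisely the technical verifications the paper suppresses---in particular the need for the Jakubowski--Skorohod version since $\mathcal{Y}(0,T:\mathbb{U})$ is only metrizable Suslin, and the preservation of the \emph{joint} law of the triple---so nothing needs to be changed.
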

		
		By Lemma \ref{lemma disintegration of Young measures}, there exists a relaxed control process $\tilde{q}$ such that
		\begin{equation}
			\tilde{\lambda}(du,dt) = \tilde{q}_t(du) \, dt.
		\end{equation}
		Similar result holds for $\tilde{\lambda}_n$. That is, there exists there exists a relaxed control process $\tilde{q}^n$ such that
		\begin{equation}
			\tilde{\lambda}_n(du,dt) = \tilde{q}_t^n(du) \, dt,\ n\in\mathbb{N}.
		\end{equation}

		\begin{lemma}\label{lemma FG bounds mn tilde}
			There exists a constant $C>0$ such that the following hold.
			\begin{equation}
				\tilde{\mathbb{E}} \sup_{t\in[0,T]} \l| \tilde{m}_n(t) \r|_{H^1}^2  \leq C,
			\end{equation}
			
			\begin{equation}
				\tilde{\mathbb{E}} \int_{0}^{T} \l| \tilde{m}_n(t) \r|_{H^2}^2 \, dt \leq C,
			\end{equation}
			
			\begin{equation}
				\tilde{\mathbb{E}} \int_{0}^{T} \l| \tilde{m}_n(t) \r|_{L^4}^4 \, dt \leq C.
			\end{equation}
			
		\end{lemma}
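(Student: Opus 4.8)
The plan is to transfer the uniform energy estimates already established for $m_n$ in Lemmas \ref{lemma FG bounds 1} and \ref{lemma FG bounds 2} to the Skorohod copies $\tilde{m}_n$, exploiting the fact that $(\tilde{m}_n, \tilde{\lambda}_n, \tilde{W}_n)$ and $(m_n, \lambda, W)$ have the same law (Lemma \ref{lemma use of Skorohod theorem}). The basic principle is that the quantities being estimated are all of the form $\mathbb{E}[\Phi(m_n, \lambda, W)]$ for a measurable, nonnegative functional $\Phi$ on $X_T \times \mathcal{Y}(0,T:\mathbb{U}) \times C([0,T]:\mathbb{R})$ (or a suitable enlargement of it carrying the $H^1$, $H^2$ information), so equality of laws forces $\tilde{\mathbb{E}}[\Phi(\tilde{m}_n, \tilde{\lambda}_n, \tilde{W}_n)] = \mathbb{E}[\Phi(m_n, \lambda, W)] \leq C$ with the same constant $C$, uniformly in $n$.

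The steps, in order, are as follows. First I would observe that the bounds \eqref{eqn FG L infinity H1 bound mn}, \eqref{eqn FG L2 H2 bound mn} and \eqref{eqn FG L4 L4 bound mn} established for $m_n$ depend only on the law of the triple $(m_n, \lambda, W)$ — more precisely, the right-hand sides involve only $\mathbb{E}\int_0^T\int_{\mathbb{U}} \kappa^4(s,v)\,\lambda(dv,ds)$, the initial data $m_0$, the horizon $T$ and the domain $\mathcal{O}$, none of which changes when we pass to the Skorohod copies (the random Young measures $\tilde{\lambda}_n$ all share the law of $\lambda$, and $P_n m_0$ is deterministic). Second, I would make rigorous the claim that $\omega \mapsto \sup_{t\in[0,T]}|m_n(t)|_{H^1}^2$, $\omega \mapsto \int_0^T |m_n(t)|_{H^2}^2\,dt$ and $\omega \mapsto \int_0^T |m_n(t)|_{L^4}^4\,dt$ are measurable functionals of the canonical variable on the appropriate path space; this is standard since these are lower semicontinuous (or Borel) on the relevant Banach spaces, and $\tilde m_n$ takes values in a space on which these norms are defined a.s. by the uniform bounds. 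Third, I would invoke equality in law from Lemma \ref{lemma use of Skorohod theorem} to conclude $\tilde{\mathbb{E}}[\,\cdot\,] = \mathbb{E}[\,\cdot\,]$ for each of these three functionals, yielding the three displayed inequalities with the same $C$.

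Alternatively — and this is the route I would actually write out if one worries about whether $m_n$ genuinely lives in the path space where the $H^2$ norm is a measurable functional — one can re-derive the estimates directly on the new probability space. Since $(\tilde m_n, \tilde\lambda_n, \tilde W_n) \stackrel{d}{=} (m_n, \lambda, W)$ and $m_n$ solves \eqref{eqn problem considered relaxed FG approximation} on the original space, a standard argument (e.g. as in \cite{ZB+BG+Le_SLLBE}) shows $\tilde m_n$ solves the analogous finite-dimensional equation driven by $\tilde W_n$ with relaxed control $\tilde q^n$; then one repeats verbatim the Itô-formula computations of Lemmas \ref{lemma FG bounds 1} and \ref{lemma FG bounds 2} on $(\tilde\Omega, \tilde{\mathcal{F}}, \tilde{\mathbb{P}})$, using that $\tilde{\mathbb{E}}\int_0^T\int_{\mathbb{U}}\kappa^4(s,v)\,\tilde\lambda_n(dv,ds) = \mathbb{E}\int_0^T\int_{\mathbb{U}}\kappa^4(s,v)\,\lambda(dv,ds) < \infty$ by Assumption \ref{assumption Main Assumption} together with equality of laws of the Young measures.

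The main obstacle is the bookkeeping in the first route: one must confirm that the stronger norms ($H^1$ in the supremum, $H^2$ in the time integral) are genuinely measurable functionals on the space $X_T$ on which the Skorohod convergence takes place — a priori $X_T$ only controls $L^2(0,T:H^1) \cap L^4(0,T:L^4) \cap C([0,T]:(H^1)')$. This is resolved by noting that the laws of $m_n$ are supported (by Lemmas \ref{lemma FG bounds 1}–\ref{lemma FG bounds 3}) on bounded subsets of $Y_T = L^\infty(0,T:H^1)\cap L^2(0,T:H^2)\cap W^{\alpha,p}(0,T:(H^1)')$, which injects continuously (indeed compactly) into $X_T$; hence $\tilde m_n$ also takes values in $Y_T$ a.s., and on $Y_T$ the relevant norms are lower semicontinuous and hence Borel, so the transfer of expectations is justified. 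Once this measurability point is dispatched, the proof is immediate.
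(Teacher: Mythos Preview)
Your proposal is correct and follows the same overall strategy as the paper --- transferring the uniform estimates from $m_n$ to $\tilde m_n$ via equality of laws --- but the paper's execution is somewhat cleaner. Rather than working with the intermediate space $Y_T$ and arguing via lower semicontinuity of the stronger norms, the paper exploits the finite-dimensional nature of the Galerkin approximation: each $m_n$ takes values in $C([0,T]:H_n)$, and on the finite-dimensional space $H_n$ all the relevant norms ($H^1$, $H^2$, $L^4$) are equivalent, so the functionals $\sup_t|\cdot|_{H^1}^2$, $\int_0^T|\cdot|_{H^2}^2\,dt$, $\int_0^T|\cdot|_{L^4}^4\,dt$ are continuous on $C([0,T]:H_n)$. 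The paper then invokes Kuratowski's theorem to identify the Borel sets of $C([0,T]:H_n)$ with Borel sets of $X_T$, which dispatches the measurability issue in one line and immediately gives equality of the expectations. Your route through $Y_T$ works too, but the step ``hence $\tilde m_n$ also takes values in $Y_T$ a.s.'' and the identification of Borel structures on $Y_T$ versus the trace from $X_T$ ultimately rely on the same Kuratowski-type argument, which you did not name; the paper's choice of $C([0,T]:H_n)$ makes this more transparent. Your alternative second route (re-deriving the It\^o estimates on the new probability space) is also valid and standard, though the paper does not take it here.
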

		\begin{proof}
			First, we observe by Kuratowski's Theorem (see Theorem 1.1, \cite{Vakhania_Probability_distributions_on_Banach_spaces}) that the Borel subsets of $C([0,T]:H_n)$ are also Borel subsets of the space $L^4(0,T:L^4) \cap L^2(0,T:H^1) \cap C([0,T]: (H^1)^\p)$. Since $m_n$ and $\tilde{m}_n$ have the same laws on the space $C([0,T]:H_n)$ we can show that the sequence $\tilde{m}_n$ satisfies the same bounds as that satisfied by the sequence $m_n$ (Lemmas \ref{lemma FG bounds 1}, \ref{lemma FG bounds 2}), for each $n\in\mathbb{N}$, thus completing the proof of the lemma.
			
		\end{proof}
		Lemma \ref{lemma FG bounds mn tilde}, along with the convergence in Lemma \ref{lemma use of Skorohod theorem} and some lower semicontinuity results gives us the following lemma (see \cite{ZB+BG+Le_SLLBE}, \cite{ZB+BG+TJ_Weak_3d_SLLGE} for instance).
		\begin{lemma}\label{lemma FG bounds m tilde}
			There exists a constant $C>0$ such that the following hold.
			\begin{equation}
				\tilde{\mathbb{E}} \sup_{t\in[0,T]} \l| \tilde{m}(t) \r|_{H^1}^2  \leq C,
			\end{equation}
			
			\begin{equation}
				\tilde{\mathbb{E}} \int_{0}^{T} \l| \tilde{m}(t) \r|_{H^2}^2 \, dt \leq C,
			\end{equation}
			
			\begin{equation}
				\tilde{\mathbb{E}} \int_{0}^{T} \l| \tilde{m}(t) \r|_{L^4}^4 \, dt \leq C.
			\end{equation}
		\end{lemma}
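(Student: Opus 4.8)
The plan is to deduce the three estimates for $\tilde m$ from the corresponding uniform-in-$n$ estimates for $\tilde m_n$ in Lemma \ref{lemma FG bounds mn tilde}, by combining the $\tilde{\mathbb{P}}$-a.s. convergence $\tilde m_n \to \tilde m$ in $X_T$ from Lemma \ref{lemma use of Skorohod theorem} with weak sequential compactness and lower semicontinuity, and then passing from pathwise bounds to bounds in expectation via Fatou's lemma. First I would fix a $\tilde{\mathbb{P}}$-full set on which $\tilde m_n(\omega)\to\tilde m(\omega)$ in $X_T$; in particular $\tilde m_n(\omega)\to\tilde m(\omega)$ in $L^2(0,T:H^1)$, in $L^4(0,T:L^4)$ and in $C([0,T]:(H^1)^\p)$, and I work on this set throughout.

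For the $H^2$ bound: by Lemma \ref{lemma FG bounds mn tilde} and Fatou's lemma (in $n$) the random variable $g(\omega):=\liminf_{n}\int_0^T|\tilde m_n(t,\omega)|_{H^2}^2\,dt$ satisfies $\tilde{\mathbb{E}}\,g\le C$, hence $g<\infty$ $\tilde{\mathbb{P}}$-a.s. On $\{g<\infty\}$ one extracts a subsequence (depending on $\omega$) that is bounded, hence weakly convergent, in $L^2(0,T:H^2)$; since that subsequence already converges to $\tilde m(\omega)$ in $L^2(0,T:H^1)\supset H^2$, its weak $L^2(0,T:H^2)$-limit must be $\tilde m(\omega)$, and weak lower semicontinuity of the $L^2(0,T:H^2)$-norm gives $\int_0^T|\tilde m(t,\omega)|_{H^2}^2\,dt\le g(\omega)$; integrating over $\tilde\Omega$ gives the claim. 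The $L^4(0,T:L^4)$ bound follows in exactly the same way using reflexivity of $L^4(0,T:L^4)$ and the a.s. convergence in $X_T$ to identify the weak limit. Equivalently, these two steps can be phrased as weak lower semicontinuity of the norms on the honest Hilbert/reflexive Bochner spaces $L^2(\tilde\Omega\times(0,T):H^2)$ and $L^4(\tilde\Omega\times(0,T):L^4)$, the weak limits being identified with $\tilde m$ by testing against product functionals $g(\omega)\psi(t)v$ and upgrading the a.s. $X_T$-convergence through a Vitali-type argument from the uniform bounds.

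The $\sup_t|\cdot|_{H^1}$ bound is where some care is needed, since weak compactness only pins down $\tilde m(t)$ for each fixed $t$ separately. Fix $t\in[0,T]$: from $\tilde m_n(\omega)\to\tilde m(\omega)$ in $C([0,T]:(H^1)^\p)$ we get $\tilde m_n(t,\omega)\to\tilde m(t,\omega)$ in $(H^1)^\p$; if $\liminf_n|\tilde m_n(t,\omega)|_{H^1}<\infty$ then a subsequence is bounded in $H^1$, has a weak-$H^1$ limit, and that limit must be $\tilde m(t,\omega)$ by the $(H^1)^\p$-convergence, so $|\tilde m(t,\omega)|_{H^1}\le\liminf_n|\tilde m_n(t,\omega)|_{H^1}\le\liminf_n\sup_{s\in[0,T]}|\tilde m_n(s,\omega)|_{H^1}=:h(\omega)$, and the inequality is trivial when the $\liminf$ is $+\infty$. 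Taking the supremum over $t$ gives $\sup_{t\in[0,T]}|\tilde m(t,\omega)|_{H^1}^2\le h(\omega)^2$ a.s., and Fatou's lemma together with Lemma \ref{lemma FG bounds mn tilde} yields $\tilde{\mathbb{E}}\sup_t|\tilde m(t)|_{H^1}^2\le\tilde{\mathbb{E}}\,h^2\le\liminf_n\tilde{\mathbb{E}}\sup_s|\tilde m_n(s)|_{H^1}^2\le C$. I expect this last estimate to be the only genuinely delicate point, as it forces one to combine a pointwise-in-$t$ weak-$H^1$ lower semicontinuity argument with Fatou's lemma over $\tilde\Omega$ rather than reading it off as a single norm lower semicontinuity; the remaining steps are routine and parallel the arguments in \cite{ZB+BG+Le_SLLBE} and \cite{ZB+BG+TJ_Weak_3d_SLLGE}.
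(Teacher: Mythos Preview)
Your proposal is correct and follows essentially the same approach as the paper: the paper simply states that the lemma follows from the uniform bounds on $\tilde m_n$ (Lemma \ref{lemma FG bounds mn tilde}), the a.s.\ convergence of Lemma \ref{lemma use of Skorohod theorem}, and ``some lower semicontinuity results,'' deferring the details to \cite{ZB+BG+Le_SLLBE} and \cite{ZB+BG+TJ_Weak_3d_SLLGE}. You have spelled out precisely those lower semicontinuity arguments (weak compactness in $L^2(0,T:H^2)$ and $L^4(0,T:L^4)$ with limit identification via the $X_T$-convergence, and the pointwise-in-$t$ weak-$H^1$ argument combined with Fatou for the $\sup_t$ bound), and your caution about the $\sup_t|\cdot|_{H^1}$ estimate is well placed and correctly handled.
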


		\subsection{Convergence}\label{section convergence}
		The subsection aims to show that the obtained limit $\tilde{m}$ is a solution to the equation \eqref{eqn problem considered relaxed weak form with Ito integral} corresponding to the Young measure $\tilde{\lambda}$ and Wiener process $\tilde{W}$ on the probability space $\l(\tilde{\Omega} , \tilde{\mathcal{F}} , \tilde{\mathbb{P}}\r)$. For the sake of simplicity of writing, we replace the notations $\tilde{m}_n,\tilde{m},\tilde{\lambda}_n,\tilde{\lambda}$, $\tilde{W}_n, \tilde{W}$ respectively by $m_n,m,\lambda_n,\lambda,W_n, W$. Similar notation changes are to be noted for the probability space. Note that the said processes are not to be confused with the finite dimensional approximations in Section \ref{section FG uniform energy estimates}.
		
		We give arguments only for the control term. Rest follow from, for example, Section 5, \cite{ZB+BG+Le_SLLBE} (see also \cite{ZB+BG+TJ_Weak_3d_SLLGE}).
		Towards that, it suffices to have the following convergence.
		\begin{lemma}\label{lemma FG convergence of control term}
			Let $\phi\in L^4(\Omega:H^1)$.
			\begin{equation}
				\lim_{n\to\infty} \mathbb{E} \l| \int_{0}^{T} \int_{\mathbb{U}} \l\langle L(m_n,u) , \phi \r\rangle_{L^2} \, \lambda_n(du,dt) - \int_{0}^{T} \int_{\mathbb{U}} \l\langle L(m,u), \phi \r\rangle_{L^2} \, \lambda(du,dt) \r|_{L^2}^2 = 0.
			\end{equation}
		\end{lemma}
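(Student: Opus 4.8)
The plan is to split the difference into a term that moves only the state variable, $m_n\to m$, while keeping the random Young measure $\lambda_n$, and a term that keeps the limiting state $m$ frozen and moves the measure, $\lambda_n\to\lambda$. Writing the random variable inside the expectation as $D_n=A_n+B_n$ with $A_n:=\int_0^T\int_{\mathbb{U}}\langle L(m_n,u)-L(m,u),\phi\rangle_{L^2}\,\lambda_n(du,dt)$ and $B_n:=\int_0^T\int_{\mathbb{U}}\langle L(m,u),\phi\rangle_{L^2}\,(\lambda_n-\lambda)(du,dt)$, it suffices to show $\mathbb{E}|A_n|^2\to0$ and $\mathbb{E}|B_n|^2\to0$; throughout I would use $|\phi|_{L^2}\le C|\phi|_{H^1}$ from the embedding $H^1\hookrightarrow L^2$.

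Both terms are treated by localizing the control variable to the compact level sets $K_R^t:=\{v\in\mathbb{U}:\kappa(t,v)\le R\}$ of the inf-compact function $\kappa$ and then letting $R\to\infty$. On $\{v\notin K_R^t\}$ one has $\kappa\le R^{-1}\kappa^2$, so the growth bound $|L(w,v)|_{L^2}\le C|w|_{L^2}\kappa(t,v)$ of Assumption \ref{assumption Main Assumption}(1), a Cauchy--Schwarz in $(\omega,t,u)$, the fact that each $\lambda_n$ has total mass $T$, the $L^4(0,T;L^4)$-bounds for $m_n$ and $m$ (Lemmas \ref{lemma FG bounds mn tilde}--\ref{lemma FG bounds m tilde}, together with $L^4\hookrightarrow L^2$ on the bounded domain $\mathcal{O}$), and $\mathbb{E}\int_0^T\int_{\mathbb{U}}\kappa^4\,\lambda_n(du,dt)<\infty$ (finite and independent of $n$ since $\lambda_n\stackrel{d}{=}\lambda$, by Assumption \ref{assumption Main Assumption}(1)) together bound the tail contributions of $\{v\notin K_R^t\}$ to $\mathbb{E}|A_n|^2$ and $\mathbb{E}|B_n|^2$ by $C R^{-1}$, uniformly in $n$. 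For the $K_R^t$-part of $A_n$ I would invoke Assumption \ref{assumption Main Assumption}(2a): since $m_n\to m$ in $L^2(0,T;H^1)$ $\mathbb{P}$-a.s.\ by Lemma \ref{lemma use of Skorohod theorem}, along a subsequence $m_n(t)\to m(t)$ in $H^1\hookrightarrow L^2$ for a.e.\ $(t,\omega)$, hence $\sup_{v\in K_R^t}|L(m_n(t),v)-L(m(t),v)|_X\to0$ for a.e.\ $(t,\omega)$; the integrand against $\lambda_n$ is dominated on $K_R^t$ by $CR|\phi|_{H^1}(|m_n(t)|_{L^2}+|m(t)|_{L^2})$, uniformly integrable on $\Omega\times[0,T]$ by the $L^4$-bounds, so dominated convergence sends this part to $0$ in $L^1(\Omega)$ along the subsequence; combined with the tail bound at one fixed large $R$ this gives $A_n\to0$.

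For $B_n$, fix $R$ and replace $\mathbbm{1}_{K_R^t}$ by a continuous cut-off $0\le\chi_R^t\le1$ equal to $1$ on $K_R^t$ and supported in $K_{R+1}^t$ (Urysohn, using compactness of the level sets of $\kappa$ and measurability of $t\mapsto K_R^t$); then $f_R(t,u):=\chi_R^t(u)\langle L(m(t),u),\phi\rangle_{L^2}$ lies in $L^1(0,T;C_{\text{b}}(\mathbb{U}))$ $\mathbb{P}$-a.s.\ by the continuity of $L(m(t),\cdot):\mathbb{U}\to L^2$ (Assumption \ref{assumption Main Assumption}(2b)) and the growth bound. Since $\lambda_n\to\lambda$ stably $\mathbb{P}$-a.s., Lemma \ref{lemma for convergence of integral of continuous bounded function Young measures} gives $\int_0^T\int_{\mathbb{U}}f_R\,\lambda_n\to\int_0^T\int_{\mathbb{U}}f_R\,\lambda$ a.s., while the difference between this and $B_n$ is supported in $\{v\notin K_R^t\}$ and is controlled by the tail bound; letting $n\to\infty$ and then $R\to\infty$ gives $B_n\to0$. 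The foregoing shows $D_n\to0$ in probability (and $\mathbb{P}$-a.s.\ along a subsequence); the upgrade to $\mathbb{E}|D_n|^2\to0$ then follows from Vitali's theorem, the uniform integrability of $\{|D_n|^2\}$ being extracted from the growth bound, Cauchy--Schwarz and the uniform moment estimates (Lemmas \ref{lemma FG bounds mn tilde}--\ref{lemma FG bounds m tilde} and the $\kappa^4$-bound), so that $\mathbb{E}[|D_n|^2\mathbbm{1}_A]$ is dominated by a positive power of $\mathbb{P}(A)$; a standard subsequence argument finally removes the subsequence.

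The main obstacle is the mismatch between the unboundedness of $L(m,\cdot)$ in the control variable and the merely stable convergence $\lambda_n\to\lambda$: Lemma \ref{lemma for convergence of integral of continuous bounded function Young measures} cannot be applied directly, so the truncation by the inf-compact $\kappa$ is forced and its error must be absorbed uniformly in $n$ --- this is exactly where the inf-compactness of $\kappa$, its fourth-power integrability against every random Young measure, and the growth bound on $L$ are used simultaneously. A secondary difficulty is producing enough uniform integrability in $\omega$ to reach the $L^2(\Omega)$ statement; should only lowest-order moments be available, one would instead prove convergence in $L^1(\Omega)$, which is all the passage to the limit in \eqref{eqn problem considered relaxed weak form with Ito integral} needs.
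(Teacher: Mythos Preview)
Your proposal is correct and follows essentially the same route as the paper: the same $A_n+B_n$ decomposition, the same localization to the level sets of $\kappa$ with the tails absorbed via $\kappa\le R^{-1}\kappa^2$, Assumption~\ref{assumption Main Assumption}(2a) together with Vitali for the $A_n$-part, and Lemma~\ref{lemma for convergence of integral of continuous bounded function Young measures} for the $B_n$-part. The only cosmetic difference is that for the second term the paper's cut-off $\Psi_R$ also truncates in $|m|_{L^2}$ (to force membership in $L^1(0,T;C_{\text{b}}(\mathbb{U}))$ directly), whereas you truncate only in $v$ and rely on the a.e.\ finiteness of $|m(t)|_{L^2}$; both variants work.
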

		\begin{proof}[Proof of Lemma \ref{lemma FG convergence of control term}]
			First, we observe the following.
			\begin{align}
				\nonumber & \mathbb{E} \l| \int_{0}^{T} \int_{\mathbb{U}} \l\langle L(m_n,v) , \phi \r\rangle_{L^2} \, \lambda_n(dv,dt) - \int_{0}^{T} \int_{\mathbb{U}} \l\langle L(m,v) , \phi \r\rangle_{L^2} \, \lambda(dv,dt) \r| \\
				\leq & 
				\nonumber \mathbb{E} \l| \int_{0}^{T} \int_{\mathbb{U}} \l\langle L(m_n,v) , \phi \r\rangle_{L^2} \, \lambda_n(dv,dt) 
				- \int_{0}^{T} \int_{\mathbb{U}} \l\langle L(m,v) , \phi \r\rangle_{L^2} \, \lambda_n(dv,dt) \r| \ \l( \text{Term} 1 \r) \\
				\nonumber & + \mathbb{E} \l| \int_{0}^{T} \int_{\mathbb{U}} \l\langle L(m,v) , \phi \r\rangle_{L^2} \, \lambda_n(dv,dt) 
				- \int_{0}^{T} \int_{\mathbb{U}} \l\langle L(m,v) , \phi \r\rangle_{L^2} \, \lambda(dv,dt) \r|. \ \l( \text{Term 2} \r)
			\end{align}		
			\textbf{Calculations for Term 1}:\\		
			Let $R>0$. Let us define a cut-off function $\psi_R$ as follows. Let $v\in\mathbb{U}$. %\coma{Let $\psi_R$ be a compactly supported smooth function such that}
			\begin{align}\label{eqn definition of Psi R 1}
				\psi_R(v) = \begin{cases}
					1,&\ \text{if}\ \kappa(\cdot,v) \leq R, \\
					0,&\ \text{if}\ \kappa(\cdot,v) \geq 2R.
				\end{cases}
			\end{align}
			Then we have
			\begin{align}
				\nonumber & \mathbb{E} \l| \int_{0}^{T} \int_{\mathbb{U}} \l\langle L(m_n,v) , \phi \r\rangle_{L^2} \, \lambda_n(dv,dt) 
				- \int_{0}^{T} \int_{\mathbb{U}} \l\langle L(m,v) , \phi \r\rangle_{L^2} \, \lambda_n(dv,dt) \r| \\
				\nonumber  = &  \mathbb{E} \l| \int_{0}^{T} \int_{\mathbb{U}} \l\langle \l( 1 - \psi_R(v) \r) L(m_n,v) , \phi \r\rangle_{L^2} \, \lambda_n(du,dt) \r|
				+  \mathbb{E} \l| \int_{0}^{T} \int_{\mathbb{U}}  \l( 1 - \psi_R(v) \r) \l\langle L(m,v) , \phi \r\rangle_{L^2} \, \lambda_n(dv,dt) \r| \\
				& +   \mathbb{E} \l| \int_{0}^{T} \int_{\mathbb{U}} \l\langle \psi_R(u) L(m_n,v) , \phi \r\rangle_{L^2} \, \lambda_n(dv,dt) 
				- \int_{0}^{T} \int_{\mathbb{U}} \psi_R(v) \l\langle L(m,v) , \phi \r\rangle_{L^2} \, \lambda_n(dv,dt) \r| .
			\end{align}
			For the first of the above three terms, we have the following.
			Let $K_R$ denote the level set given by
			$$K_R = \l\{ v : \kappa(\cdot,v) \geq R \r\}.$$
			%		\adda{Then $u\in K_R$ implies that $\kappa(\cdot,u) \geq R$. Hence $\kappa^{-1}(\cdot,u) \leq \frac{1}{R}$.}
			
			\begin{align}
				\nonumber  \mathbb{E} \l| \int_{0}^{T} \int_{\mathbb{U}} \l\langle \l( 1 - \psi_R(v) \r) L(m_n,v) , \phi \r\rangle_{L^2} \, \lambda_n(dv,dt) \r| 
				\leq &   \mathbb{E} \int_{0}^{T} \int_{K_R}  \l| L(m_n,v) \r|_{L^2} \l| \phi \r|_{L^2} \, \lambda_n(dv,dt) \\
				\nonumber \leq &  \mathbb{E} \int_{0}^{T} \int_{K_R} \kappa(t,v) \l|m_n(t) \r|_{L^2} \l| \phi \r|_{L^2} \, \lambda_n(dv,dt) \\
				%\nonumber \leq & \adda{ \mathbb{E} \int_{0}^{T} \int_{K_R} \kappa^{-1}(t,u) \kappa^2(t,u) \l|m_n(t) \r|_{L^2} \l| \phi \r|_{L^2} \, \lambda_n(du,dt)} \\
				\nonumber \leq & \frac{1}{R}  \mathbb{E} \int_{0}^{T} \int_{K_R}  \kappa^2(t,v) \l|m_n(t) \r|_{L^2} \l| \phi \r|_{L^2} \, \lambda_n(dv,dt) \\
				%\nonumber \leq & \adda{\frac{1}{R} \l( \mathbb{E} \int_{0}^{T} \int_{K_R}  \kappa^4(t,u) \, \lambda_n(du,dt) \r)^{\frac{1}{2}} }\\
				%\nonumber & \adda{\centerdot \l( \mathbb{E} \int_{0}^{T} \int_{K_R} \l|m_n(t) \r|_{L^2}^4 \, dt \r)^{\frac{1}{4}} }\\
				%\nonumber & \adda{\centerdot \l( \mathbb{E} \int_{0}^{T} \int_{K_R} \l| \phi \r|_{L^2}^4 \, dt \r)^{\frac{1}{4}} }\\
				\leq & \frac{C}{R}.
			\end{align}	
			The right hand side of the above inequality goes to $0$ as $n$ to infinity. The second term can be handled similarly.\\			
			We now show the calculations for the last term. We recall the term for readers convenience.
			\begin{equation}
				\mathbb{E} \l| \int_{0}^{T} \int_{\mathbb{U}} \l\langle \psi_R(v) L(m_n,v) , \phi \r\rangle_{L^2} \, \lambda_n(dv,dt) 
				- \int_{0}^{T} \int_{\mathbb{U}} \psi_R(v) \l\langle L(m,v) , \phi \r\rangle_{L^2} \, \lambda_n(dv,dt) \r| .
			\end{equation}
			
			First, we note the following.
			\begin{align}
				\nonumber &  \mathbb{E} \l| \int_{0}^{T} \int_{\mathbb{U}} \l\langle \psi_R(v) L(m_n,v) , \phi \r\rangle_{L^2} \, \lambda_n(dv,dt) 
				- \int_{0}^{T} \int_{\mathbb{U}} \psi_R(v) \l\langle L(m,v) , \phi \r\rangle_{L^2} \, \lambda_n(dv,dt) \r| \\
				\nonumber \leq &  \mathbb{E} \int_{0}^{T} \l[ \sup_{v\in K_R} \l| L(m_n,v) - L(m,v) \r|_{L^2} \int_{K_R}    \l| \phi \r|_{L^2} \, \r]\lambda_n(dv,dt) .
			\end{align}
			\dela{Note that by Assumption \ref{assumption Main Assumption}, we have
				\begin{equation}
					\lim_{n\to\infty}  \sup_{u\in K_R} \l| L(m_n,u) - L(m,u) \r|_{L^2} = 0.
			\end{equation}}
			Moreover,
			\begin{align}
				\nonumber & \mathbb{E} \int_{0}^{T} \sup_{v\in K_R} \l| L(m_n,v) - L(m,v) \r|_{L^2}^2 \int_{K_R}    \l| \phi \r|_{L^2}^2 \, \lambda_n(dv,dt) \\
				\leq & C(R) \l( \mathbb{E} \int_{0}^{T}  \bigl[ \l| m_n(t)\r|_{L^2}^4 + \l| m(t)\r|_{L^2}^4 \bigr]  \, dt \r)^{\frac{1}{2}} 
				\l( \mathbb{E} \int_{0}^{T}   \l| \phi \r|_{L^2}^2 \, dt \r)^{\frac{1}{2}} < \infty.
			\end{align}
			Therefore by Vitali Convergence Theorem, combined with Assumption \ref{assumption Main Assumption}, we have the following for each $R>0$.
			\begin{equation}
				\lim_{n\to\infty}  \mathbb{E} \l| \int_{0}^{T} \int_{\mathbb{U}} \l\langle \psi_R(v) L(m_n,v) , \phi \r\rangle_{L^2} \, \lambda_n(dv,dt) 
				- \int_{0}^{T} \int_{\mathbb{U}} \psi_R(v) \l\langle L(m,v) , \phi \r\rangle_{L^2} \, \lambda_n(dv,dt) \r| = 0.
			\end{equation}
			Summing up, we have the following.
			
			\begin{equation}
				\lim_{n\to\infty} \mathbb{E} \l| \int_{0}^{T} \int_{\mathbb{U}} \l\langle L(m_n,v) , \phi \r\rangle_{L^2} \, \lambda_n(dv,dt) 
				- \int_{0}^{T} \int_{\mathbb{U}} \l\langle L(m,v) , \phi \r\rangle_{L^2} \, \lambda_n(dv,dt) \r| = 0.
			\end{equation}		
			This concludes the calculations for Term 1.\\	
			\textbf{Calculations for Term 2:} We define another cut-off function as done before. Let $v\in\mathbb{U}$. Let %\coma{Let $\Psi_R$ be a compactly supported smooth function such that}
			\begin{align}\label{eqn definition of Psi R 2}
				\Psi_R(m,v) = \begin{cases}
					1,&\ \text{if}\ \l| m \r|_{L^2} \leq R \text{ and }  \kappa(\cdot,v)  \leq R,\\
					0,&\ \text{if}\ \l| m \r|_{L^2} \geq 2R \ \text{ or } \kappa(\cdot,v) \geq 2R.
				\end{cases}
			\end{align}
			
			\begin{align}
				\nonumber & \mathbb{E} \l| \int_{0}^{T} \int_{\mathbb{U}} \l\langle L(m,v) , \phi \r\rangle_{L^2} \, \lambda_n(dv,dt) 
				- \int_{0}^{T} \int_{\mathbb{U}} \l\langle L(m,v) , \phi \r\rangle_{L^2} \, \lambda(dv,dt) \r| \\
				\nonumber \leq & \mathbb{E} \bigg| \int_{0}^{T} \int_{\mathbb{U}} \l( 1 - \Psi_R(m(t),v) \r) \l\langle L(m,v) , \phi \r\rangle_{L^2} \, \lambda_n(dv,dt) \bigg| \\
				\nonumber & + \bigg| \int_{0}^{T} \int_{\mathbb{U}}  \l( 1 - \Psi_R(m(t),v) \r) \l\langle L(m,v) , \phi \r\rangle_{L^2} \, \lambda(dv,dt) \bigg| \\
				\nonumber & + \mathbb{E} \bigg| \int_{0}^{T} \int_{\mathbb{U}} \Psi_R(m(t),v)\l\langle L(m,v) , \phi \r\rangle_{L^2} \, \lambda_n(dv,dt)  - \int_{0}^{T} \int_{\mathbb{U}} \Psi_R(m(t),v)\l\langle L(m,v) , \phi \r\rangle_{L^2} \, \lambda(dv,dt) \bigg|.
			\end{align}
			The first two terms can be handled as done previously. For the third term, we have the following.

			First, we observe that by Assumption \ref{assumption Main Assumption} and the definition of $\Psi_R$ (equation \eqref{eqn definition of Psi R 2}), we have $\Psi_R(m(\cdot)) L(m(\cdot)) \in L^1(0,T:C_{\text{b}}(\mathbb{U}))$.		
			Now, by Lemma \ref{lemma for convergence of integral of continuous bounded function Young measures}, we have the following.
			\begin{align}
				\nonumber \lim_{n\to\infty} \mathbb{E}  \bigg| \int_{0}^{T} \int_{\mathbb{U}} \Psi_R(m,v)\l\langle L(m,v) , \phi \r\rangle_{L^2} \, \lambda_n(dv,dt)  - \int_{0}^{T} \int_{\mathbb{U}} \Psi_R(m(t),v)\l\langle L(m,v) , \phi \r\rangle_{L^2} & \, \lambda(dv,dt) \bigg| \\
				& = 0.
			\end{align}
			Summing up, we have
			\begin{align}
				\lim_{n\to\infty} \mathbb{E} \l| \int_{0}^{T} \int_{\mathbb{U}} \l\langle L(m,v) , \phi \r\rangle_{L^2} \, \lambda_n(dv,dt) 
				- \int_{0}^{T} \int_{\mathbb{U}} \l\langle L(m,v) , \phi \r\rangle_{L^2} \, \lambda(dv,dt) \r| = 0.
			\end{align}
			This concludes the calculations for Term 2 and hence also the proof of the lemma.
		\end{proof}

		We now go back to the notations set up in Section \ref{section Use of Skorohod Theorem}. Using the convergences in Lemma \ref{lemma FG convergence of control term} and following Section 5 in \cite{ZB+BG+Le_SLLBE} (see also \cite{ZB+BG+TJ_Weak_3d_SLLGE}), we can show that the process $\tilde{m}$ satisfies the equality \eqref{eqn problem considered relaxed weak form with Ito integral} corresponding to the Wiener process $\tilde{W}$ and the random Young measure $\tilde{\lambda}$. Therefore, we can conclude that the tuple $\tilde{\pi} = \l( \tilde{\Omega} , \tilde{\mathcal{F}} , \tilde{\mathbb{F}} , \tilde{\mathbb{P}} , \tilde{m} , \tilde{\lambda} , \tilde{W} \r)$ is a weak martingale solution to the relaxed problem \eqref{eqn problem considered relaxed}, thus concluding the proof of Theorem \ref{theorem existence of weak martingale solution}.

		\section{Existence of a Optimal Control}\label{section existence of a optimal control}
		\begin{proof}[Idea of proof of Theorem \ref{theorem existence of weak relaxed optimal control}]
			The section aims to show that the problem \eqref{eqn problem considered original} admits a weak relaxed control as in Definition \ref{definition weak relaxed optimal control}. We give a sketch of the proof.
			In Step 1, we claim that the space $\mathcal{U}(m_0,T)$ of admissible solutions is non-empty, and hence the infimum $\Lambda$ is finite.
			% Suppose not. If every weak martingale solution has infinite cost, then the minimum of costs is also infinite, and hence it is attained (since we have shown that at least one solution exists). Hence it suffices to study the case where at least one solution has finite cost.
			Therefore, there exists a minimizing sequence of weak martingale solutions. From this sequence, we extract a subsequence that converges to another weak martingale solution. We then use the lower semicontinuity of the costs $F,\Psi$ to show that the infimum $\Lambda$ is attained at the so obtained limit.
		\end{proof}
		
		%	\adda{We recall the \textbf{relaxed cost functional} \eqref{eqn relaxed cost functional}. For an admissible solution $\pi\in\mathcal{U}(m_0,T)$.
			%	\begin{equation}
				%		\mathcal{J}\l( \pi  \r) : = \mathbb{E}\l[ \int_{0}^{T} F\bigl(t,m(t) , u \bigr) \, \lambda(du,dt) + \Psi\bigl(m(T)\bigr) \r]
				%	\end{equation}}
		%\adda{List out the assumptions that can be removed if only the control part is to be done.}
		\begin{enumerate}
			\item[\textbf{Step 1:}] %In Step 1, we show that the space of admissible solutions is non-empty, and hence the infimum of the cost functional is finite.
			Theorem \ref{theorem existence of weak martingale solution}\dela{ (also Theorem \ref{theorem existence strong solution})} implies that the problem \eqref{eqn problem considered relaxed} admits a weak martingale solution. Hence, we can say that the space of admissible solutions $\mathcal{U}(m_0,T)$ is non-empty, and hence $\Lambda$ is finite. 
			Suppose not. That is, if every weak martingale solution has infinite cost, then the minimum of costs is also infinite, and hence it is attained (since we have shown that at least one solution exists).
			Therefore it suffices to study the case where at least one solution has finite cost.
			
			\item[\textbf{Step 2:}] Since $\Lambda<\infty$, there exists a minimizing sequence $\l\{ \pi_n \r\}_{n\in\mathbb{N}} = \l( \Omega_n , \mathcal{F}_n , \mathbb{F}_n , \mathbb{P}_n , m_n , \lambda_n , W_n \r)$ of admissible solutions.
			In particular, each $m_n$ satisfies the following equation for $n\in\mathbb{N}$.
			
			\begin{align}\label{eqn oc eqn for mn}
				\nonumber m_n(t) = & m_0 + \int_{0}^{t} \Delta m_n(s) + m_n(s) \times \Delta m_n(s) - \l( 1 + \l| m_n(s) \r|_{\mathbb{R}^3}^2 \r) \, ds \\
				\nonumber & + \int_{0}^{t} \int_{\mathbb{U}} \l[ L\bigl(m_n(s),v\bigr) \r] \, \lambda_n(dv,ds) \\
				& + \frac{1}{2} \int_{0}^{t} \l( m_n(s) \times h \r) \times h \, ds
				+ \int_{0}^{t} \l( m_n(s) \times h + h \r)\,  dW_n(s),\ t\in[0,T].
			\end{align}
			
			Since the said sequence is a minimizing one, we have the existence of a constant $R>0$ such that for each $n\in\mathbb{N}$,
			\begin{equation}
				J(\pi_n) \leq R.
			\end{equation}
			By the coercivity condition \eqref{eqn coercivity assumption}, there exists a constant $C>0$ independent of $n$ such that
			\begin{align}
				\mathbb{E}^n \int_{0}^{T} \kappa^4 (t,v) \lambda_n(dv,dt) \leq C \mathbb{E}^n \int_{0}^{T} F\big(m(t),v\big) \lambda_n(dv,dt) \leq J(\pi_n) \leq R.
			\end{align}
			Here $\mathbb{E}^n$ denotes the expectation with respect to the probability space $\l(\Omega_n, \mathcal{F}_n, \mathbb{P}_n\r)$.
			
			Concerning uniform energy estimates, first, we observe that in Lemmata \ref{lemma FG bounds 1}, \ref{lemma FG bounds 2}, \ref{lemma FG bounds 3}, and hence also Lemma \ref{lemma FG bounds m tilde} the bounds depend on the bound on the inf compact function $\kappa$. Hence the uniform bounds (independent of $n\in\mathbb{N}$) can be established for the aforementioned approximating sequence. This is made rigorous in the following lemma.
			% Note that the controls should come from an admissible set, which means that their norms may be uniformly bounded.
			
			\begin{lemma}\label{lemma oc bounds lemma 1}
				There exists a constant $C>0$, which is independent of $n$ such that the following hold.
				\begin{equation}\label{eqn oc L infinity H1 bound mn}
					\mathbb{E}^n\sup_{t\in[0,T]} \l| m_n(t) \r|_{H^1}^2 \leq C,
				\end{equation}
				\begin{equation}\label{eqn oc L2 H2 bound mn}
					\mathbb{E}^n\int_{0}^{T} \l| m_n(t) \r|_{H^2}^2 \, dt \leq C.
				\end{equation}
				%Note that the constant $C$ can depend on the bound $\mathbb{E}\int_{0}^{T} \int_{\mathbb{U}} \kappa(s,v)^4 \lambda(dv,ds)$.
			\end{lemma}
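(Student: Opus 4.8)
The plan is to re-run the energy estimates of Lemmata \ref{lemma FG bounds 1} and \ref{lemma FG bounds 2}, this time directly on the weak martingale solutions $m_n$ solving \eqref{eqn oc eqn for mn}, and to verify that the only source of $n$-dependence in those estimates — namely the integrals of the inf compact function $\kappa$ against $\lambda_n$ — is controlled uniformly by the coercivity of $F$ together with the fact that $\{\pi_n\}$ is a minimizing (hence cost-bounded) sequence. By Definition \ref{definition weak martingale solution relaxed equation} each $m_n$ already satisfies $m_n\in L^2(\Omega_n;L^\infty(0,T;H^1))\cap L^2(\Omega_n;L^2(0,T;H^2))$, so the real content of the lemma is the \emph{uniformity} of the constants. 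First I would apply the It\^o formula in the Gelfand triple $H^1\hookrightarrow L^2\hookrightarrow (H^1)^\p$ to the functionals $v\mapsto\tfrac12|v|_{L^2}^2$ and $v\mapsto\tfrac12|\nabla v|_{L^2}^2$, obtaining the exact analogues of \eqref{eqn FG bounds lemma 1 inequality 1} and \eqref{eqn FG bounds lemma 2 inequality 1} with $(m_n,q^n,W_n)$ in place of the Galerkin quantities; the dissipative deterministic terms are nonpositive precisely as in \eqref{eqn bounds lemma 1 inequality for I1} and \eqref{eqn bounds lemma 2 calculation for I1}, and require no change.

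The control term is where the minimizing property enters. Using part (1) of Assumption \ref{assumption Main Assumption} one has $\big|\int_{\mathbb{U}}L(m_n(s),v)\,q_s^n(dv)\big|_{L^2}\le C|m_n(s)|_{L^2}\int_{\mathbb{U}}\kappa(s,v)\,q_s^n(dv)$, and, after Young's inequality and the embedding $L^4\hookrightarrow L^2$ exactly as in the treatment of $I_2$ in Lemmata \ref{lemma FG bounds 1} and \ref{lemma FG bounds 2}, all $\kappa$-contributions are dominated by $\mathbb{E}^n\int_0^T\int_{\mathbb{U}}\kappa^4(t,v)\,\lambda_n(dv,dt)$ (plus the finite mass of $\lambda_n$). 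By the coercivity condition \eqref{eqn coercivity assumption} and the bound $J(\pi_n)\le R$,
\begin{equation*}
	\mathbb{E}^n\int_0^T\int_{\mathbb{U}}\kappa^4(t,v)\,\lambda_n(dv,dt)\ \le\ C\,\mathbb{E}^n\int_0^T\int_{\mathbb{U}}F\big(t,m_n(t),v\big)\,\lambda_n(dv,dt)\ \le\ C\,J(\pi_n)\ \le\ CR ,
\end{equation*}
which is independent of $n$. The stochastic terms $\int_0^t\langle m_n\times h+h,m_n\rangle_{L^2}\,dW_n$ and $\int_0^t\langle m_n\times h+h,-\Delta m_n\rangle_{L^2}\,dW_n$ are absorbed into the left-hand side after Burkh\"older--Davis--Gundy, Young's inequality and a small parameter, exactly as in \eqref{eqn FG bounds lemma 1 inequality for noise term} and its analogue in Lemma \ref{lemma FG bounds 2}. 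Collecting the estimates, choosing the free parameters small and applying Gronwall's inequality gives first $\mathbb{E}^n\sup_{t\in[0,T]}|m_n(t)|_{L^2}^2\le C$ and then, from the remaining nonnegative left-hand side terms, $\mathbb{E}^n\int_0^T|m_n(t)|_{H^1}^2\,dt+\mathbb{E}^n\int_0^T|m_n(t)|_{L^4}^4\,dt\le C$; inserting these into the $|\nabla m_n|_{L^2}^2$ identity and running a second Gronwall argument (as in Lemma \ref{lemma FG bounds 2}) yields \eqref{eqn oc L infinity H1 bound mn} and \eqref{eqn oc L2 H2 bound mn}, with $C$ depending only on $R$, $h$, $m_0$, $T$ and $\mathcal{O}$.

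The hard part is the usual one: justifying the It\^o formula for the infinite-dimensional, non-Galerkin solution $m_n$. I would resolve this via the standard It\^o formula for the square of a Gelfand-triple norm, whose hypotheses are met by the $L^\infty(0,T;H^1)\cap L^2(0,T;H^2)$ regularity guaranteed in Definition \ref{definition weak martingale solution relaxed equation} together with the fact that \eqref{eqn oc eqn for mn} holds as an identity in $(H^1)^\p$. Alternatively, since every $m_n$ is itself the limit of its own Faedo--Galerkin approximations, one can derive the stated bounds at the finite-dimensional level — where, by Lemmata \ref{lemma FG bounds 1} and \ref{lemma FG bounds 2} combined with the uniform $\kappa^4$-bound above, the constants are already $n$-independent — and pass to the limit using lower semicontinuity of the $L^\infty(H^1)$, $L^2(H^2)$ and $L^4(L^4)$ norms under the Skorokhod-type convergence, exactly as in the passage from Lemma \ref{lemma FG bounds mn tilde} to Lemma \ref{lemma FG bounds m tilde}.
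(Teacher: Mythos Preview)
Your proposal is correct and is precisely the argument the paper intends: the paper does not give a separate proof of this lemma but merely observes, just before stating it, that the constants in Lemmata \ref{lemma FG bounds 1}, \ref{lemma FG bounds 2}, \ref{lemma FG bounds 3} and hence in Lemma \ref{lemma FG bounds m tilde} depend on $n$ only through $\mathbb{E}\int_0^T\int_{\mathbb{U}}\kappa^4\,\lambda(dv,ds)$, which is uniformly bounded by the coercivity assumption \eqref{eqn coercivity assumption} and the bound $J(\pi_n)\le R$. Your second alternative (run the Galerkin estimates for each $m_n$ and pass to the limit via lower semicontinuity, as in the passage from Lemma \ref{lemma FG bounds mn tilde} to Lemma \ref{lemma FG bounds m tilde}) is exactly how the paper's reference to Lemma \ref{lemma FG bounds m tilde} should be read, and your discussion of the direct It\^o-formula route is a welcome elaboration that the paper omits.
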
			
			A result of $m_n$ satisfying the equation \eqref{eqn oc eqn for mn} and the above Lemma \ref{lemma oc bounds lemma 1} is the following lemma.
			\begin{lemma}\label{lemma oc bounds 2}
				There exists a constant $C>0$ such that for $\alpha\in(0,\frac{1}{2})$, $p\geq 2$, with $\alpha p>1$, we have the following inequality.
				\begin{equation}\label{eqn oc W alpha p bound mn}
					\mathbb{E}^n \l| m_n \r|_{W^{\alpha,p}(0,T:(H^1)^\p)}^2 \leq C.
				\end{equation}
			\end{lemma}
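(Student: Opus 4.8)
The plan is to follow the proof of Lemma \ref{lemma FG bounds 3} almost verbatim, the only new point being that the uniform (in $n$) control of the relaxed drift now comes from the coercivity of the cost rather than from Assumption \ref{assumption Main Assumption}. Concretely, I would rewrite \eqref{eqn oc eqn for mn} as $m_n(t) = m_0 + \int_0^t \Phi_n(s)\,ds + \int_0^t g_n(s)\,dW_n(s)$, with $g_n(s) = m_n(s)\times h + h$ and $\Phi_n$ the sum of $\Delta m_n$, $m_n\times\Delta m_n$, $-(1+|m_n|_{\mathbb{R}^3}^2)m_n$, the It\^o--Stratonovich correction $\tfrac12(m_n\times h)\times h$, and the relaxed control term $\int_{\mathbb{U}} L(m_n(\cdot),v)\,q^n_\cdot(dv)$, where $\lambda_n(dv,dt)=q^n_t(dv)\,dt$ by Lemma \ref{lemma disintegration of Young measures}.

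Next I would bound the drift in $W^{1,2}(0,T:(H^1)^\p)$. The linear and polynomial terms are estimated in $(H^1)^\p$ exactly as in \cite{ZB+BG+Le_SLLBE}, \cite{ZB+BG+TJ_Weak_3d_SLLGE}, using $L^2\hookrightarrow (H^1)^\p$ and $L^{3/2}\hookrightarrow (H^1)^\p$ (valid for $d\le 3$), the bounds of Lemma \ref{lemma oc bounds lemma 1}, and the $L^4(0,T:L^4)$ bound on $m_n$ that the same energy identity produces as in Lemma \ref{lemma FG bounds 1}; the correction term is bounded since $h\in W^{2,\infty}$. For the control term I would reuse the Cauchy--Schwarz estimate of Lemma \ref{lemma FG bounds 3}: by Assumption \ref{assumption Main Assumption},
\[ \mathbb{E}^n\int_0^T \l| \int_{\mathbb{U}} L(m_n(s),v)\,q^n_s(dv)\r|_{L^2}^2\,ds \le C\l( \mathbb{E}^n\int_0^T |m_n(s)|_{L^4}^4\,ds\r)^{1/2}\l( \mathbb{E}^n\int_0^T\int_{\mathbb{U}} \kappa^4(s,v)\,\lambda_n(dv,ds)\r)^{1/2}, \]
and the second factor is finite because the minimizing-sequence bound $\mathbb{E}^n\int_0^T\int_{\mathbb{U}}\kappa^4(t,v)\,\lambda_n(dv,dt)\le C$ was already extracted from $J(\pi_n)\le R$ via the coercivity assumption \eqref{eqn coercivity assumption}. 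This gives $\mathbb{E}^n\int_0^T|\Phi_n(s)|_{(H^1)^\p}^2\,ds\le C$, hence $t\mapsto\int_0^t\Phi_n(s)\,ds\in W^{1,2}(0,T:(H^1)^\p)$ with $\mathbb{E}^n$-bounded norm, and the continuous embedding $W^{1,2}(0,T:(H^1)^\p)\hookrightarrow W^{\alpha,p}(0,T:(H^1)^\p)$ for $\alpha<\tfrac12$ takes care of this part.

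For the stochastic term, since $\mathbb{E}^n\int_0^T|g_n(s)|_{L^2}^2\,ds\le C\bigl(1+\mathbb{E}^n\sup_{t\in[0,T]}|m_n(t)|_{H^1}^2\bigr)\le C$ by Lemma \ref{lemma oc bounds lemma 1} and $h\in W^{2,\infty}$, Lemma 2.1 of \cite{Flandoli_Gatarek} gives, for $\alpha<\tfrac12$ and $p\ge2$, $\mathbb{E}^n\l|\int_0^\cdot g_n(s)\,dW_n(s)\r|_{W^{\alpha,p}(0,T:L^2)}^p\le C$, whence by $L^2\hookrightarrow (H^1)^\p$ and Jensen's inequality the same holds with exponent $2$ and target $(H^1)^\p$. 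Adding the three contributions together with the constant $|m_0|_{(H^1)^\p}$ and using $|a+b+c|^2\le 3(|a|^2+|b|^2+|c|^2)$ yields \eqref{eqn oc W alpha p bound mn}. The only real work — hence the main obstacle — is purely bookkeeping: checking term by term that $\Phi_n$ is square-integrable in time with values in $(H^1)^\p$, uniformly in $n$, using only the estimates of Lemma \ref{lemma oc bounds lemma 1}; within this, the relaxed control term is the delicate one, and it is precisely here that the coercivity bound \eqref{eqn coercivity assumption} substitutes for the role played by the a priori bound on $\kappa$ in Assumption \ref{assumption Main Assumption} in the Galerkin setting.
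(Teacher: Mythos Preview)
Your proposal is correct and follows essentially the same approach as the paper, which presents this lemma without proof as a direct consequence of Lemma~\ref{lemma oc bounds lemma 1} together with the argument of Lemma~\ref{lemma FG bounds 3}, the only difference being that the uniform (in $n$) bound on $\mathbb{E}^n\int_0^T\int_{\mathbb{U}}\kappa^4(s,v)\,\lambda_n(dv,ds)$ now stems from the coercivity condition \eqref{eqn coercivity assumption} applied to the minimizing sequence, exactly as you identify.
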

			
			\item[\textbf{Step 3:}] As done for the existence part (see Lemma \ref{lemma FG tightness} in particular), we show that the sequence of laws of $ \l( m_n, \lambda_n, W_n \r) ,n\in\mathbb{N}$ is tight on some space. This is made rigorous in the following lemma. We again use a notation used in Section \ref{section compactness}.\\
			Let $X_T = L^2(0,T:H^1) \cap L^4(0,T:L^4) \cap  C([0,T]:(H^1)^\p)$.
			\begin{lemma}\label{lemma oc tightness}		
				The sequence of laws of $\l( m_n , \lambda_n , W_n \r)$ is tight on the space $X_T \times \mathcal{Y}(0,T:\mathbb{U}) \times C([0,T]:\mathbb{R})$.
				
				%				\dela{
					%				\item Let $\mathcal{L}(m_n)$ denote the law of the process $m_n,n\in\mathbb{N}$. Then the sequence $\l\{ \mathcal{L}(m_n) \r\}_{n\in\mathbb{N}}$ is tight on the space $L^2(0,T:H^1) \cap L^4(0,T:L^4)\cap C([0,T] : (H^1)^\p)$.
					%				\item The sequence of measures $\l\{\lambda_n\r\}_{n\in\mathbb{N}} $, (respectively the laws of $\l\{ W^n \r\}_{n\in\mathbb{N}} $) is tight on the spaces $\mathcal{Y}(0,T:\mathbb{U})$ (respectively $C([0,T]:\mathbb{R})$).
					%				
					%			}

			\end{lemma}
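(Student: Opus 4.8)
The plan is to repeat, essentially verbatim, the tightness argument of Lemma~\ref{lemma FG tightness}, with the Faedo--Galerkin bounds used there replaced by the uniform estimates of Step~2. I would establish tightness of each of the three marginal families separately and then deduce tightness of the joint laws from the fact that a product of compact sets is compact.

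For the $m_n$-component: by Lemmata~\ref{lemma oc bounds lemma 1} and~\ref{lemma oc bounds 2} the quantities $\mathbb{E}^n\sup_{t\in[0,T]}|m_n(t)|_{H^1}^2$, $\mathbb{E}^n\int_0^T|m_n(t)|_{H^2}^2\,dt$ and $\mathbb{E}^n|m_n|_{W^{\alpha,p}(0,T:(H^1)^\p)}^2$ are bounded uniformly in $n$; these bounds depend on the cost bound $R$ only through $\sup_n\mathbb{E}^n\int_0^T\int_{\mathbb{U}}\kappa^4\,d\lambda_n\le CR$, so they are genuinely uniform along the minimizing sequence. Given $\varepsilon>0$, Chebyshev's inequality then concentrates the laws of $m_n$, up to mass $\varepsilon$, in a ball of
\[
 Y_T := L^\infty(0,T:H^1)\cap L^2(0,T:H^2)\cap W^{\alpha,p}(0,T:(H^1)^\p),
\]
and such a ball is relatively compact in $X_T$ by the compact embedding $Y_T\hookrightarrow X_T$ of Simon \cite{Simon_Compact_Sets} (valid since $\alpha p>1$). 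This yields tightness of $\{\mathcal{L}(m_n)\}$ on $X_T$. For the $W_n$-component there is nothing to do: each $W_n$ is a real-valued Wiener process, so all the laws $\mathcal{L}(W_n)$ coincide with Wiener measure, a single Radon measure on the Polish space $C([0,T]:\mathbb{R})$.

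The one genuinely new point — and the part I expect to require the most care — is the $\lambda_n$-component, since here (unlike in Lemma~\ref{lemma FG tightness}, where the random Young measure is fixed) the $\lambda_n$ vary with $n$. I would argue as follows: from the coercivity hypothesis \eqref{eqn coercivity assumption} and $J(\pi_n)\le R$ one obtains $\sup_n\mathbb{E}^n\int_0^T\int_{\mathbb{U}}\kappa^4(t,v)\,\lambda_n(dv,dt)\le CR$; since $\kappa$ is inf-compact and $\{\kappa^4\le M\}=\{\kappa\le M^{1/4}\}$, the function $\kappa^4$ is inf-compact as well, so by Lemma~\ref{lemma tightness} every set $\{\lambda:\int_0^T\int_{\mathbb{U}}\kappa^4\,d\lambda\le M\}$ is flexibly tight, hence relatively compact in $\mathcal{Y}(0,T:\mathbb{U})$ with the stable topology (see \cite{ZB+RS}). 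A final application of Chebyshev's inequality, $\mathbb{P}_n\bigl(\int_0^T\int_{\mathbb{U}}\kappa^4\,d\lambda_n>M\bigr)\le CR/M$, then shows that $\{\mathcal{L}(\lambda_n)\}$ is tight on $\mathcal{Y}(0,T:\mathbb{U})$. Combining the three marginal tightness statements: for $\varepsilon>0$ pick compact sets $K^1_\varepsilon\subset X_T$, $K^2_\varepsilon\subset\mathcal{Y}(0,T:\mathbb{U})$, $K^3_\varepsilon\subset C([0,T]:\mathbb{R})$ of $\mathbb{P}_n$-mass $>1-\varepsilon/3$ each; then $K^1_\varepsilon\times K^2_\varepsilon\times K^3_\varepsilon$ is compact and carries at least $\mathbb{P}_n$-mass $1-\varepsilon$, uniformly in $n$, which is exactly the asserted tightness. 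The main obstacle is essentially bookkeeping: ensuring that the moment bound feeding the Young-measure compactness criterion is the $\kappa^4$ one (which is precisely why the coercivity assumption is stated with $\kappa^4$ rather than $\kappa^2$) and invoking the correct form of the stable-topology compactness result; all the analytic estimates involved have already been carried out in Section~\ref{section existence of a solution for the relaxed control}.
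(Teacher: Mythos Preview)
Your proposal is correct and follows exactly the route the paper intends: the paper does not give a separate proof of this lemma but simply says ``as done for the existence part (see Lemma~\ref{lemma FG tightness} in particular)'', and your argument unpacks precisely that reference, using Lemmata~\ref{lemma oc bounds lemma 1}--\ref{lemma oc bounds 2} in place of the Galerkin estimates. Your observation that the $\lambda_n$ now genuinely vary with $n$ (so that the uniform $\kappa^4$-bound coming from coercivity, together with Lemma~\ref{lemma tightness}, is what actually drives tightness on $\mathcal{Y}(0,T:\mathbb{U})$) is a point the paper glosses over, and your treatment of it is correct.
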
		
			Using the Skorohod Theorem, we have the following lemma.
			\begin{lemma}\label{lemma oc use of Skorohod theorem}
				There exists a probability space $\l(\tilde{\Omega} , \tilde{\mathcal{F}} , \tilde{\mathbb{P}}\r)$, along with %$L^2(0,T:H^1) \cap L^4(0,T:L^4) \cap  C([0,T] : (H^1)^\p)$
				$X_T$-valued random variables $\l\{\tilde{m}_n\r\}_{n\in\mathbb{N}},\tilde{m}$ on the said probability space, along with a sequence of random Young measures $\l\{\tilde{\lambda}_n\r\}_{n\in\mathbb{N}}, \tilde{\lambda}$ on $\mathbb{U}$ and processes $\l\{\tilde{W}_n\r\}_{n\in\mathbb{N}},\tilde{W}$ such that the following hold.
				\begin{equation}
					\l( \tilde{m}_n , \tilde{\lambda}_n , \tilde{W}_n \r) \stackrel{d}{=} \l( m_n , \lambda_n , W_n \r),\ \forall n\in\mathbb{N}.
				\end{equation}
				\begin{equation}
					\l( \tilde{m}_n , \tilde{\lambda}_n , \tilde{W}_n \r) \to \l( \tilde{m} , \tilde{\lambda} , \tilde{W} \r),\ \tilde{\mathbb{P}}\text{-a.s. in}\ X_T \times \mathcal{Y}(0,T:\mathbb{U}) \times C([0,T] : \mathbb{R}).
				\end{equation}
				
				\dela{
					\item $\tilde{m}_n$ and $\tilde{m}_n$ have the same laws on the space $L^2(0,T:H^1) \cap L^4(0,T:L^4) \cap  C([0,T] : (H^1)^\p)$, for each $n\in\mathbb{N}$.
					\item 
					\begin{equation}
						\tilde{m}_n \to \tilde{m} \text{ in } L^2(0,T:H^1) \cap L^4(0,T:L^4) \cap  C([0,T] : (H^1)^\p)\ \mathbb{P}^\p-a.s.
					\end{equation}
					\item $\tilde{\lambda}_n$ has the same law as that of $\lambda_n$ for each $n\in\mathbb{N}$ on the space of Young measures $\mathcal{Y}(0,T:\mathbb{U})$.
					\item 
					\begin{equation}
						\tilde{\lambda}_n \to \tilde{\lambda} \text{ stably in } \mathcal{Y}(0,T:\mathbb{U})\ \mathbb{P}^\p-a.s..
					\end{equation}
					\item $\tilde{W}_n$ has the same law as that of $W_n$ for each $n\in\mathbb{N}$ on the space of $C([0,T]:\mathbb{R})$.
					\item 
					\begin{equation}
						\tilde{W}_n \to \tilde{W} \text{ in } C([0,T]:\mathbb{R})\ \mathbb{P}^\p-a.s..
					\end{equation}

				}
				%\coma{Write as a tuple instead of individual convergences.}
			\end{lemma}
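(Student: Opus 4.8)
The plan is to obtain this lemma as a direct application of the Skorohod representation theorem. By Lemma \ref{lemma oc tightness}, the family of laws of the triples $(m_n,\lambda_n,W_n)$ is tight on the product space $X_T\times\mathcal{Y}(0,T:\mathbb{U})\times C([0,T]:\mathbb{R})$. Here $X_T = L^2(0,T:H^1)\cap L^4(0,T:L^4)\cap C([0,T]:(H^1)^\p)$ and $C([0,T]:\mathbb{R})$ are Polish spaces, while $\mathcal{Y}(0,T:\mathbb{U})$ equipped with the stable topology is a metrizable Suslin space by the Proposition following Definition \ref{definition stable topology}; hence the product is again metrizable Suslin. Tightness then gives relative compactness of the laws in the topology of weak convergence of probability measures, so I would first pass to a subsequence (which, as in Section \ref{section Use of Skorohod Theorem}, we refrain from relabelling) along which the laws converge weakly to some probability measure $\mu$ on the product space.

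Next I would invoke the version of the Skorohod representation theorem valid when the target is a separable metric space and the limit measure is Radon --- which is the case here, since every Borel probability measure on a metrizable Suslin space is Radon; this is the formulation used in \cite{ZB+RS} (and may alternatively be deduced from Jakubowski's extension of Skorohod's theorem). This produces a single probability space $(\tilde\Omega,\tilde{\mathcal{F}},\tilde{\mathbb{P}})$ carrying random elements $(\tilde m_n,\tilde\lambda_n,\tilde W_n)$, $n\in\mathbb{N}$, with $(\tilde m_n,\tilde\lambda_n,\tilde W_n)\stackrel{d}{=}(m_n,\lambda_n,W_n)$, together with $(\tilde m,\tilde\lambda,\tilde W)$ of law $\mu$, such that $(\tilde m_n,\tilde\lambda_n,\tilde W_n)\to(\tilde m,\tilde\lambda,\tilde W)$ $\tilde{\mathbb{P}}$-almost surely in the product space. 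Unravelling the product topology yields precisely the two displayed assertions: equality in law for each $n$, and $\tilde{\mathbb{P}}$-almost sure convergence of the three coordinates in $X_T$, stably in $\mathcal{Y}(0,T:\mathbb{U})$, and in $C([0,T]:\mathbb{R})$, respectively. The argument would then mirror, almost verbatim, the proof of Lemma \ref{lemma use of Skorohod theorem} in the existence part.

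The only point genuinely requiring care is the Young-measure coordinate, since $\mathcal{Y}(0,T:\mathbb{U})$ is in general merely metrizable Suslin and not Polish, so the classical Skorohod theorem does not apply verbatim. I expect this to be the main (and essentially the only) obstacle, and I would handle it either by citing the Suslin-space version from \cite{ZB+RS} directly, or by the standard reduction: realise the metrizable Suslin factor as a Borel subset of a Polish space through a Borel isomorphism, transport the laws along it, apply the classical Skorohod theorem there, and transport back. I would also stress that the present lemma asserts nothing about filtrations or about $\tilde W_n$ being a Wiener process on the new space; promoting $\tilde W$ (and the approximants) to Wiener processes adapted to a suitable filtration, together with the compatibility of $\tilde\lambda$ with that filtration, is a separate step carried out afterwards, exactly as in Section \ref{section Use of Skorohod Theorem}.
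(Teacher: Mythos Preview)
Your proposal is correct and follows precisely the route the paper takes: the paper states the lemma as a direct consequence of the Skorohod theorem (just as in Lemma \ref{lemma use of Skorohod theorem}), without spelling out the details you have filled in. Your remarks on the metrizable Suslin nature of $\mathcal{Y}(0,T:\mathbb{U})$ and the need for a Suslin/Jakubowski version of Skorohod are exactly the points implicit in the paper's one-line justification.
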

			By Kuratowski's Theorem, we can show that the processes $\tilde{m}_n$ satisfy the same bounds that are satisfied by the processes $m_n$, for each $n\in\mathbb{N}$. This is given in the following lemma.
			
			\begin{lemma}\label{lemma oc bounds lemma 3}
				There exists a constant $C>0$, which is independent of $n$ such that the following hold.
				\begin{equation}%\label{eqn oc L infinity H1 bound mn tilde}
					\tilde{\mathbb{E}}\sup_{t\in[0,T]} \l| \tilde{m}_n(t) \r|_{H^1}^2 \leq C,
				\end{equation}
				\begin{equation}
					\tilde{\mathbb{E}}\int_{0}^{T} \l| \tilde{m}_n(t) \r|_{H^2}^2 \, dt \leq C.
				\end{equation}
				%			\begin{equation}\label{eqn oc L2 H2 bound mn tilde}
					%				\mathbb{E}\int_{0}^{T} \l| \tilde{m}_n(t) \r|_{H^2}^2 \, dt \leq C.
					%			\end{equation}
				%Note that the constant $C$ can depend on the bound $\mathbb{E}\int_{0}^{T} \int_{\mathbb{U}} \kappa(s,v)^4 \lambda(dv,ds)$.
			\end{lemma}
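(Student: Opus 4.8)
The plan is to transfer the uniform estimates of Lemma~\ref{lemma oc bounds lemma 1} from the original processes $m_n$ to their Skorohod copies $\tilde{m}_n$ using nothing more than the equality in law furnished by Lemma~\ref{lemma oc use of Skorohod theorem}, together with Kuratowski's theorem. Following the same device already used for Lemma~\ref{lemma FG bounds mn tilde}, set
\[
Y_T := L^{\infty}(0,T:H^1) \cap L^{2}(0,T:H^2) \cap W^{\alpha,p}(0,T:(H^1)^{\p}),
\]
with $\alpha,p$ as in Lemma~\ref{lemma oc bounds 2}. The inclusion $Y_T \hookrightarrow X_T$ is continuous and injective, and $Y_T$ is a Polish space, so by Kuratowski's theorem (Theorem~1.1, \cite{Vakhania_Probability_distributions_on_Banach_spaces}) the set $Y_T$ is a Borel subset of $X_T$ and the Borel $\sigma$-algebra of $Y_T$ coincides with the trace on $Y_T$ of the Borel $\sigma$-algebra of $X_T$.

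Next I would record that the two maps
\[
\Phi_1(f) := \sup_{t\in[0,T]} \l| f(t) \r|_{H^1}^2 \l( = \l\| f \r\|_{L^{\infty}(0,T:H^1)}^2 \r),
\qquad
\Phi_2(f) := \int_{0}^{T} \l| f(t) \r|_{H^2}^2 \, dt,
\]
are well defined and continuous on $Y_T$ (each is a power of one of the norms defining $Y_T$), hence Borel measurable on $Y_T$; extending them by the value $+\infty$ on $X_T \setminus Y_T$ keeps them Borel measurable as maps $X_T \to [0,\infty]$. By Lemma~\ref{lemma oc bounds lemma 1}, the process $m_n$ takes values in $Y_T$ $\mathbb{P}_n$-almost surely and $\mathbb{E}^n \Phi_i(m_n) \le C$, $i=1,2$, with $C$ independent of $n$.

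Finally, since $\tilde{m}_n$ and $m_n$ have the same law as $X_T$-valued random variables, and $Y_T$ is Borel in $X_T$, it follows that $\tilde{m}_n$ takes values in $Y_T$ $\tilde{\mathbb{P}}$-almost surely, that the laws of $\Phi_i(\tilde{m}_n)$ and of $\Phi_i(m_n)$ on $[0,\infty]$ agree, and hence
\[
\tilde{\mathbb{E}}\,\Phi_i(\tilde{m}_n) = \mathbb{E}^n\,\Phi_i(m_n) \le C, \qquad i=1,2,
\]
which is exactly the assertion of the lemma. I expect the only genuinely delicate point to be the measurability bookkeeping of the first two paragraphs: one must make sure that $\Phi_1$ and $\Phi_2$, which are naturally defined on the finer space $Y_T$, are measurable with respect to the Borel structure that $X_T$ actually carries (and that $\{m_n \in Y_T\}$ is an event there). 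This is precisely the point that Kuratowski's theorem settles, and once it is in place the rest is routine.
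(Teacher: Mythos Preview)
Your argument is essentially the paper's own: the paper merely states that, by Kuratowski's theorem, the processes $\tilde{m}_n$ inherit the bounds of Lemma~\ref{lemma oc bounds lemma 1}, exactly as in the proof of Lemma~\ref{lemma FG bounds mn tilde}, and gives no further detail. One small technical caveat in your write-up: $L^{\infty}(0,T:H^1)$ is not separable, so $Y_T$ as you define it is not Polish and Kuratowski's theorem does not apply verbatim; the simplest repair is to observe directly that $\Phi_1$ and $\Phi_2$, extended by $+\infty$ off $Y_T$, are lower semicontinuous (hence Borel) on $X_T$, which already suffices to transfer the expectations and sidesteps the Kuratowski step entirely.
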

			Similar to Lemma \ref{lemma FG bounds m tilde}, we can show that following lemma.
			\begin{lemma}\label{lemma oc bounds lemma 4}
				There exists a constant $C>0$, such that the following hold.
				\begin{equation}\label{eqn oc L infinity H1 bound m}
					\tilde{\mathbb{E}}\sup_{t\in[0,T]} \l| \tilde{m}(t) \r|_{H^1}^2 \leq C,
				\end{equation}
				
				\begin{equation}\label{eqn oc L2 H2 bound m}
					\tilde{\mathbb{E}}\int_{0}^{T} \l| \tilde{m}(t) \r|_{H^2}^2 \, dt \leq C.
				\end{equation}
				%Note that the constant $C$ can depend on the bound $\mathbb{E}\int_{0}^{T} \int_{\mathbb{U}} \kappa(s,v)^4 \lambda(dv,ds)$.
			\end{lemma}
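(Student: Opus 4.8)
The plan is to obtain Lemma~\ref{lemma oc bounds lemma 4} by passing to the limit $n\to\infty$ in the uniform estimates of Lemma~\ref{lemma oc bounds lemma 3}, mirroring the argument used for Lemma~\ref{lemma FG bounds m tilde} in the existence part. First I would record that, by Lemma~\ref{lemma oc bounds lemma 3}, the sequence $\{\tilde{m}_n\}_{n\in\mathbb{N}}$ is bounded, uniformly in $n$, in $L^2\bigl(\tilde{\Omega};L^\infty(0,T:H^1)\bigr)$ and in $L^2\bigl(\tilde{\Omega};L^2(0,T:H^2)\bigr)$, while Lemma~\ref{lemma oc use of Skorohod theorem} provides $\tilde{m}_n\to\tilde{m}$ $\tilde{\mathbb{P}}$-a.s. in $X_T$, hence in particular in $L^2(0,T:H^1)$ and in $C([0,T]:(H^1)^\p)$.

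For \eqref{eqn oc L2 H2 bound m} I would argue as follows: $L^2\bigl(\tilde{\Omega}\times(0,T):H^2\bigr)$ is a Hilbert space, so the uniformly bounded sequence $\{\tilde{m}_n\}$ has a weakly convergent subsequence there; the a.s. (hence distributional) convergence $\tilde{m}_n\to\tilde{m}$ identifies the weak limit with $\tilde{m}$, and weak lower semicontinuity of the norm gives $\tilde{\mathbb{E}}\int_0^T \l|\tilde{m}(t)\r|_{H^2}^2\,dt \le \liminf_{n}\tilde{\mathbb{E}}\int_0^T \l|\tilde{m}_n(t)\r|_{H^2}^2\,dt \le C$.

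The delicate point — and what I expect to be the main obstacle — is the $\sup_{t}H^1$ bound \eqref{eqn oc L infinity H1 bound m}, because $L^\infty$ in the time variable is not reflexive, so one cannot simply extract a weakly convergent subsequence. Here I would use that $H^1$ is separable and reflexive, so $L^\infty(0,T:H^1)$ is the dual of the separable space $L^1(0,T:(H^1)^\p)$; along a further subsequence $\tilde{m}_n$ then converges weakly-$*$ in $L^\infty(0,T:H^1)$, $\tilde{\mathbb{P}}$-a.s., and the a.s. convergence in $C([0,T]:(H^1)^\p)$ pins down this weak-$*$ limit as $\tilde{m}$. This yields a weakly $H^1$-continuous representative of $\tilde{m}$ with $\sup_{t\in[0,T]}\l|\tilde{m}(t)\r|_{H^1}^2 \le \liminf_{n}\sup_{t\in[0,T]}\l|\tilde{m}_n(t)\r|_{H^1}^2$ $\tilde{\mathbb{P}}$-a.s.; taking $\tilde{\mathbb{E}}$ and invoking Fatou's lemma with Lemma~\ref{lemma oc bounds lemma 3} closes the estimate. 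The $H^2$ bound is routine by contrast; the only real care needed is the interplay of the weak-$*$ duality in time with Fatou's lemma in $\tilde{\Omega}$ for the supremum bound, after which the constant $C$ is exactly the one furnished by Lemma~\ref{lemma oc bounds lemma 3} (and hence, tracing back, by the a priori energy estimates and the coercivity bound $\tilde{\mathbb{E}}^n\int_0^T\int_{\mathbb{U}}\kappa^4(t,v)\,\lambda_n(dv,dt)\le R$).
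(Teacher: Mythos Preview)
Your proposal is correct and follows exactly the route the paper indicates: pass the uniform bounds of Lemma~\ref{lemma oc bounds lemma 3} to the limit using the Skorohod convergence of Lemma~\ref{lemma oc use of Skorohod theorem} together with weak/weak-$*$ lower semicontinuity of the norms and Fatou's lemma, precisely as in Lemma~\ref{lemma FG bounds m tilde}. The paper itself gives no more detail than the sentence ``Similar to Lemma~\ref{lemma FG bounds m tilde}, we can show the following lemma'', so your write-up already supplies more than the paper does.
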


			Lemma \ref{lemma oc use of Skorohod theorem} gives pointwise $(\tilde{\mathbb{P}}$-a.s.) convergence of $\tilde{m}_n$ to $\tilde{m}$. Following the arguments in Section \ref{section existence of a solution for the relaxed control}, we can show that the obtained tuple $\tilde{\pi} = \l( \tilde{\Omega} , \tilde{\mathcal{F}} , \tilde{\mathbb{F}} , \tilde{\mathbb{P}} , \tilde{m} , \tilde{\lambda} , \tilde{W} \r)$ is a weak martingale solution to the problem \eqref{eqn problem considered relaxed}.
			
			\item[\textbf{Step 4:}] It now remains to show that the obtained solution indeed minimizes the cost.
			%		 This is now shown. We use the lower semicontinuity of the cost $F$ in order to show the required inequality. 
			We have the following convergences $\tilde{\mathbb{P}}$-a.s.
			%	\begin{enumerate}
				%		\item 
				\begin{align}
					\tilde{m}_n \to \tilde{m}\ \text{to}\ L^2(0,T:H^1),\ \tilde{\lambda}_n \to \tilde{\lambda}\ \text{in}\ \mathcal{Y}(0,T:\mathbb{U}).
				\end{align}
				%		\item \begin{align}
					%			\tilde{\lambda}_n \to \tilde{\lambda}\ \text{in}\ \mathcal{Y}(0,T:\mathbb{U}).
					%		\end{align}
				%	\end{enumerate}
			We recall (Assumption \ref{assumption assumption on cost functional}) that $F\geq 0$. Following the proof of Lemma 10 in \cite{Sritharan_DeterministicAndStochasticControl_SNSE} (see also, Lemma A.14 in \cite{UM+DM}, Lemma 2.15 in \cite{ZB+RS}, \cite{Haussman+Lepeltier_1990_OnExistenceOptimalControls}, \cite{Jean+Jean_1981Book}), we can show that the relaxed cost functional is lower semicontinuous. In particular we have the following inequality.
			\begin{align}
				\int_{0}^{T} \int_{\mathbb{U}} F(t,\tilde{m} , v) \, \tilde{\lambda}(dv , dt) \leq \liminf_{n\to\infty} \int_{0}^{T} \int_{\mathbb{U}} F(t,\tilde{m}_n , v ) \, \tilde{\lambda}_n (dv , dt).
			\end{align}
			
			%By Assumption \ref{assumption assumption on cost functional}, we have the following.
			%\begin{align}
			%	F\l(\tilde{m}(t) , \tilde{q}_t\r) \leq \liminf_{n\to\infty}	F\l(\tilde{m}_n(t) , \tilde{q}^n_t\r).
			%\end{align}
			%The function $F$ is lower semicontinuous, and hence also measurable.
			Since the laws of $\tilde{m}_n$, $\tilde{\lambda}_n$ are the same as those of $m_n$, $\lambda_n$, we have
			\begin{align}\label{eqn oc minimum const inequality 1}
				\liminf_{n\to\infty} \tilde{\mathbb{E}} \int_{0}^{T} \int_{\mathbb{U}} F\l(\tilde{m}_n(t) , v\r) \, \tilde{\lambda}_n(dv,dt) \leq \liminf_{n\to\infty} \mathbb{E} \int_{0}^{T} \int_{\mathbb{U}} F\l(m_n(t) , v\r) \, \lambda_n(dv,dt).
			\end{align}
			By the above inequality and Fatou's Lemma, the continuity of $\Psi$ along with the fact that $\pi_n$ is a minimizing sequence, we have
			%	\begin{align}
				%		\nonumber \mathcal{J}(\tilde{\pi}) = \tilde{\mathbb{E}} \int_{0}^{T} \int_{\mathbb{U}} F\l(\tilde{m}(t) , \tilde{q}^n_t\r) \, dt & \leq \liminf_{n\to\infty} \tilde{\mathbb{E}} \int_{0}^{T} F\l(\tilde{m}_n(t) , \tilde{q}^n_t\r) \, dt \\
				%		& \leq \liminf_{n\to\infty} \mathbb{E} \int_{0}^{T} F\l(m_n(t) , q^n_t\r) \, dt = \Lambda.
				%	\end{align} 
			
			\begin{align}\label{eqn oc minimum const inequality 2}
				\nonumber \mathcal{J}(\tilde{\pi}) = \tilde{\mathbb{E}} \l[ \int_{0}^{T} \int_{\mathbb{U}} F\l(\tilde{m}(t) , v \r) \, \tilde{\lambda}(dv,dt) + \Psi(\tilde{m}(T)) \r] & \leq \liminf_{n\to\infty} \tilde{\mathbb{E}} \l[ \int_{0}^{T} F\l(\tilde{m}_n(t) , v \r) \, \tilde{\lambda}_n(dv,dt) + \Psi(\tilde{m}_n(T)) \r] \\
				\nonumber & \leq \liminf_{n\to\infty} \mathbb{E} \l[ \int_{0}^{T} F\l(m_n(t) , v \r) \, \lambda_n(dv,dt) + \Psi(m_n(T)) \r]  \\
				& = \Lambda.
			\end{align}
			%	Since $\pi_n$ is a minimizing sequence, we have
			%	\begin{align}
				%		\mathcal{J}(\tilde{\pi}) = \tilde{\mathbb{E}} \int_{0}^{T} \int_{\mathbb{U}} F\l(\tilde{m}(t) , \tilde{q}_t \r) \, dt \geq \Lambda.
				%	\end{align}
			Hence combining the above two estimates (\eqref{eqn oc minimum const inequality 1} and \eqref{eqn oc minimum const inequality 2}) gives
			\begin{align}
				\mathcal{J}(\tilde{\pi}) = \tilde{\mathbb{E}} \l[ \int_{0}^{T} F\l(\tilde{m}(t) , v \r) \, \tilde{\lambda}(dv,dt) + \Psi(\tilde{m}(T)) \r] = \Lambda.
			\end{align}
			Hence, the tuple $\tilde{\pi}$ minimizes the cost and therefore is a weak relaxed optimal control, as desired.
		\end{enumerate}
		
		\textbf{Disclosure Statement:} The authors report there are no competing interests to declare.
		
		\appendix
		
		\section{Pathwise Uniqueness and Existence of a Strong Solution}\label{section pathwise uniqueness and existence of a strong solution}
		The section focuses on proving that the problem \eqref{eqn problem considered relaxed} admits a pathwise unique solution. We reiterate that, in this section, we restrict ourselves to $d = 1,2$.
		\begin{theorem}\label{theorem pathwise uniqueness of solution}
			Let Assumption \ref{assumption Main Assumption} hold. Recall that the last assumption in Assumption \ref{assumption Main Assumption} will now be considered. Then the solution to \eqref{eqn problem considered relaxed} is pathwise unique.
			That is, the following holds. Let $m_1,m_2$ denote two weak martingale solutions to the problem \eqref{eqn problem considered relaxed}, corresponding to the same Wiener process and Young measure, on the same probability space, with the same initial data $m_0$. Then for each $t\in[0,T]$,
			\begin{equation}
				m_1(t) - m_2(t) = 0,\ \mathbb{P}-a.s.
			\end{equation}
		\end{theorem}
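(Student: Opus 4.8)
The plan is the usual energy estimate on the difference, but carried out carefully to see which structural cancellations make it close and where the restriction $d=1,2$ enters. Set $r:=m_1-m_2$, so $r(0)=0$, and subtract the two It\^o-form weak identities \eqref{eqn problem considered relaxed weak form with Ito integral} satisfied by $m_1,m_2$ with the common $W$, $\lambda(du,dt)=q_t(du)\,dt$ and $m_0$. Then $r$ solves an equation, linear in $r$, whose drift contains $\Delta r$, the difference $m_1\times\Delta m_1-m_2\times\Delta m_2 = r\times\Delta m_1 + m_2\times\Delta r$, the difference $-[(1+|m_1|_{\mathbb{R}^3}^2)m_1-(1+|m_2|_{\mathbb{R}^3}^2)m_2] = -r-(|m_1|_{\mathbb{R}^3}^2 m_1-|m_2|_{\mathbb{R}^3}^2 m_2)$, the It\^o--Stratonovich correction $\tfrac12(r\times h)\times h$, and the control term $\int_{\mathbb{U}}[L(m_1,v)-L(m_2,v)]\,q_s(dv)$, while its noise is simply $(r\times h)\,dW$. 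The first step is to apply the It\^o formula to $t\mapsto\tfrac12|r(t)|_{L^2}^2$ in the Gelfand triple $H^1\hookrightarrow L^2\hookrightarrow(H^1)'$ (legitimate since, for a.e.\ $t$, the drift of $r$ lies in $(H^1)'$: in $d=1,2$ one has $m_i\times\Delta m_i,\ |m_i|_{\mathbb{R}^3}^2m_i\in L^{4/3}\hookrightarrow(H^1)'$ and $\Delta m_i\in L^2$, while the control term is in $L^2$ by Assumption \ref{assumption Main Assumption}(1)); this is a routine technical point.

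The key observation is that the worst-looking terms vanish or carry the right sign. The martingale part of the It\^o formula is $\int_0^t\langle r,r\times h\rangle_{L^2}\,dW=0$, since $a\cdot(a\times b)=0$ pointwise; hence the entire estimate can be run \emph{pathwise}, with no stopping-time argument. For the same reason $\langle r\times\Delta m_1,r\rangle_{L^2}=0$. Next, $\langle\Delta r,r\rangle_{L^2}=-|\nabla r|_{L^2}^2$ is a good dissipative term; $-\langle|m_1|_{\mathbb{R}^3}^2m_1-|m_2|_{\mathbb{R}^3}^2m_2,r\rangle_{L^2}\le 0$ by monotonicity of $a\mapsto|a|_{\mathbb{R}^3}^2a=\nabla(\tfrac14|a|_{\mathbb{R}^3}^4)$ (gradient of a convex map); the Stratonovich correction together with the quadratic-variation term is bounded by $C(h)|r|_{L^2}^2$; and for the control term Assumption \ref{assumption Main Assumption}(3) gives $|L(m_1,v)-L(m_2,v)|_{L^2}\le C|r|_{L^2}\kappa(\cdot,v)$, so that $\langle\int_{\mathbb{U}}[L(m_1,v)-L(m_2,v)]q_s(dv),r\rangle_{L^2}\le C|r|_{L^2}^2\bigl(1+\int_{\mathbb{U}}\kappa^2(s,v)\,q_s(dv)\bigr)$, with $s\mapsto\int_{\mathbb{U}}\kappa^2(s,v)q_s(dv)\in L^1(0,T)$ $\mathbb{P}$-a.s.\ by Assumption \ref{assumption Main Assumption}(1).

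The only genuinely delicate term is $\langle m_2\times\Delta r,r\rangle_{L^2}$. Using the cyclic identity $(m_2\times\Delta r)\cdot r=\Delta r\cdot(r\times m_2)$, integrating by parts (Neumann conditions kill the boundary term) and using $a\cdot(a\times b)=0$ once more, one gets $\langle m_2\times\Delta r,r\rangle_{L^2}=-\sum_k\langle\partial_k r,\,r\times\partial_k m_2\rangle_{L^2}$, hence the bound $|\nabla r|_{L^2}\,|r|_{L^4}\,|\nabla m_2|_{L^4}$. This is exactly where $d=1,2$ is used: Gagliardo--Nirenberg/Ladyzhenskaya give $|r|_{L^4}\le C|r|_{L^2}^{1/2}|r|_{H^1}^{1/2}$ and $|\nabla m_2|_{L^4}\le C|m_2|_{H^1}^{1/2}|m_2|_{H^2}^{1/2}$, so the term is at most $C|\nabla r|_{L^2}^{3/2}|r|_{L^2}^{1/2}|m_2|_{H^1}^{1/2}|m_2|_{H^2}^{1/2}$ up to lower-order pieces, and Young's inequality with exponents $(\tfrac43,4)$ turns it into $\varepsilon|\nabla r|_{L^2}^2+C_\varepsilon|r|_{L^2}^2\,|m_2|_{H^1}^2|m_2|_{H^2}^2$. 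Choosing $\varepsilon$ small enough to be absorbed into the $-|\nabla r|_{L^2}^2$ from $\langle\Delta r,r\rangle_{L^2}$, everything collapses to a Gronwall inequality $|r(t)|_{L^2}^2\le\int_0^t g(s)|r(s)|_{L^2}^2\,ds$, $\mathbb{P}$-a.s.\ for all $t\in[0,T]$, with $g(s)=C\bigl(1+C(h)+|m_2(s)|_{H^1}^2|m_2(s)|_{H^2}^2+\int_{\mathbb{U}}\kappa^2(s,v)q_s(dv)\bigr)$; crucially $g\in L^1(0,T)$ $\mathbb{P}$-a.s., precisely because a weak martingale solution satisfies $\sup_{t}|m_2(t)|_{H^1}^2<\infty$ and $\int_0^T|m_2(t)|_{H^2}^2\,dt<\infty$ a.s.

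Since $r(0)=0$, the pathwise Gronwall lemma forces $|r(t)|_{L^2}=0$ for all $t$, i.e.\ $m_1(t)=m_2(t)$ $\mathbb{P}$-a.s., which is the claim (and, combined with Theorem \ref{theorem existence of weak martingale solution} and Yamada--Watanabe, gives a strong solution for $d=1,2$). The main obstacle is the single term $\langle m_2\times\Delta r,r\rangle_{L^2}$: it is what rules out a naive $L^2$-contraction and forces the dimensional restriction, and the delicate point is to split off its gradient part into the dissipation while leaving behind a coefficient that is integrable in time using \emph{only} the a priori bounds guaranteed for weak martingale solutions.
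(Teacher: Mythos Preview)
Your proposal is correct and follows essentially the same route as the paper: write the equation for the difference, apply the It\^o formula to $\tfrac12|r|_{L^2}^2$ in the triple $H^1\hookrightarrow L^2\hookrightarrow (H^1)'$, exploit the cancellations $\langle r,r\times h\rangle_{L^2}=0$ and $\langle r\times\Delta m_1,r\rangle_{L^2}=0$, handle the control term via Assumption~\ref{assumption Main Assumption}(3), absorb the one dangerous term $\langle m_2\times\Delta r,r\rangle_{L^2}$ into the dissipation using a dimension-dependent interpolation, and close by pathwise Gronwall.

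Two small differences are worth recording. First, for the cubic term you invoke monotonicity of $a\mapsto|a|_{\mathbb{R}^3}^2a$ to get a sign, whereas the paper (following \cite{ZB+BG+Le_SLLBE}) bounds it directly via $L^\infty$-norms of $m_1,m_2$; your version is cleaner and avoids putting the factor $|m_2|_{L^\infty}(|m_1|_{L^\infty}+|m_2|_{L^\infty})$ into the Gronwall coefficient. Second, for $\langle m_2\times\Delta r,r\rangle_{L^2}$ the paper uses two separate interpolations, producing $|\nabla m_2|_{L^2}^4$ for $d=1$ and $|\nabla m_2|_{L^2}^2|m_2|_{H^2}^2$ for $d=2$ in $\Phi_C$, while you run a single Ladyzhenskaya-type estimate yielding $|m_2|_{H^1}^2|m_2|_{H^2}^2$ uniformly for $d\le2$; either coefficient is in $L^1(0,T)$ a.s.\ by the a priori bounds of a weak martingale solution, so both close the argument.
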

		Combining the pathwise uniqueness obtained above with the theory of Yamada and Watanabe \cite{Ikeda+Watanabe} gives us the following result.
		\begin{theorem}\label{theorem existence strong solution}
			The relaxed problem \eqref{eqn problem considered relaxed} admits a strong solution.
		\end{theorem}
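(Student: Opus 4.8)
The plan is to derive Theorem~\ref{theorem existence strong solution} from the two results already in hand by invoking the Yamada--Watanabe principle: weak existence together with pathwise uniqueness implies the existence of a strong solution. First I would record the ingredients. Theorem~\ref{theorem existence of weak martingale solution} supplies a weak martingale solution $\tilde\pi=(\tilde\Omega,\tilde{\mathcal F},\tilde{\mathbb F},\tilde{\mathbb P},\tilde m,\tilde\lambda,\tilde W)$ of \eqref{eqn problem considered relaxed} with $\tilde\lambda$ having the same law as $\lambda$ on $\mathcal Y(0,T:\mathbb U)$, while for $d=1,2$ Theorem~\ref{theorem pathwise uniqueness of solution} gives pathwise uniqueness: any two weak martingale solutions of \eqref{eqn problem considered relaxed} driven by the same Wiener process and the same random Young measure, on a common filtered probability space and with the same initial datum, are indistinguishable.

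The key conceptual point is that the random Young measure $\lambda$ must be treated, alongside the Wiener process $W$, as part of the prescribed driving data: a strong solution is then an $X_T$-valued process that is a weak martingale solution on the originally given stochastic basis and is progressively measurable with respect to the usual-hypotheses augmentation of the filtration generated by $m_0$, $W$ and $\lambda$. Accordingly I would set up the Yamada--Watanabe argument over the path space $\mathbb{X}:=C([0,T]:\mathbb R)\times\mathcal Y(0,T:\mathbb U)$ carrying the joint law of $(W,\lambda)$. Since $\mathbb U$ is metrizable Suslin, the space $\mathcal Y(0,T:\mathbb U)$ with the stable topology is again metrizable Suslin (as recorded in Section~\ref{section some notations, preliminaries}); this regularity is exactly what makes the disintegration (regular conditional probabilities) and measurable-selection steps underlying the abstract Yamada--Watanabe theorem (in its SPDE form; cf.\ \cite{Ikeda+Watanabe}) go through. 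One forms, on a product space, the joint law of two independent copies of the solution component conditioned on $(W,\lambda)$, verifies that each copy is still a weak martingale solution of \eqref{eqn problem considered relaxed} there, and applies Theorem~\ref{theorem pathwise uniqueness of solution} to conclude that the two copies coincide; the standard argument then produces a Borel map $\mathcal{I}:H^1\times\mathbb{X}\to X_T$ with the property that $\mathcal{I}(m_0,W,\lambda)$ is a solution adapted to the data whenever $(W,\lambda)$ has the prescribed law.

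Finally I would evaluate this map on the original basis $(\Omega,\mathcal F,\mathbb F,\mathbb P)$ with the given $W$ and the given $\lambda$: the process $m:=\mathcal{I}(m_0,W,\lambda)$ then satisfies the weak identity \eqref{eqn problem considered relaxed weak form with Ito integral} with $W$ and $\lambda$, is progressively measurable with respect to the augmented filtration generated by the data, and inherits the energy regularity of a weak martingale solution; hence it is a strong solution in the required sense, and pathwise uniqueness upgrades this to uniqueness in law and indistinguishability of strong solutions. (Alternatively, the Gy\"ongy--Krylov characterization of convergence in probability could be combined with pathwise uniqueness and the Faedo--Galerkin approximation of Section~\ref{section existence of a solution for the relaxed control} to obtain strong convergence directly, but the Yamada--Watanabe route is the shortest given what has already been proved.)

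I expect the only genuine obstacle to be the measure-theoretic bookkeeping needed to push the classical, finite-dimensional Yamada--Watanabe construction through with the extra, infinite-dimensional and merely Suslin, random input $\lambda$: one must check that $\mathcal Y(0,T:\mathbb U)$ with the stable Borel structure is ``standard enough'' for the disintegration and selection arguments, and that gluing two solution copies over the common data $(W,\lambda)$ yields a bona fide weak martingale solution on the product space so that Theorem~\ref{theorem pathwise uniqueness of solution} is applicable there. Once these points are settled, the conclusion is immediate.
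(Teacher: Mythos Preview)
Your proposal is correct and follows exactly the paper's approach: the paper simply states that combining pathwise uniqueness (Theorem~\ref{theorem pathwise uniqueness of solution}) with the Yamada--Watanabe theory \cite{Ikeda+Watanabe} yields the strong solution, without spelling out any of the measure-theoretic details you describe. Your elaboration on treating $\lambda$ as part of the driving data and the Suslin structure of $\mathcal Y(0,T:\mathbb U)$ goes beyond what the paper provides, but the core argument is identical.
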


		\begin{proof}[Idea of the proof of Theorem \ref{theorem pathwise uniqueness of solution}]
			In order to start the proof of Theorem \ref{theorem pathwise uniqueness of solution}, we first show (Corollary \ref{corollary equality in H1 prime}) that the equality \eqref{eqn problem considered relaxed} makes sense in $\l( H^1 \r)^\p$.  The proof then is standard. We consider $m_1,m_2$ as in Theorem \ref{theorem pathwise uniqueness of solution}. The equation satisfied by their difference $m$ is then considered. An application of the It\^o formula, followed by simplification, energy estimates and the use of Gronwall's inequality gives the required uniqueness. The structure of the proof is similar to the proof of Theorem 2.3 in \cite{ZB+BG+Le_SLLBE}. We show the calculations for the control term and refer the reader to, for instance, Section 2 in \cite{ZB+BG+Le_SLLBE}, for more details about the remaining calculations.
		\end{proof}
		\begin{corollary}\label{corollary equality in H1 prime}
			The equality \eqref{eqn problem considered relaxed} makes sense in $\l( H^1 \r)^\p$.
		\end{corollary}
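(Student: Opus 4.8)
The plan is to verify that, for a weak martingale solution $m$ enjoying the regularity recorded in Lemma \ref{lemma FG bounds m tilde}, each term on the right-hand side of the It\^o formulation of \eqref{eqn problem considered relaxed} is, $\mathbb{P}$-almost surely, either a Bochner integrable $(H^1)^\p$-valued function of $t$ or a continuous $(H^1)^\p$-valued process, and then to observe that pairing the resulting $(H^1)^\p$-valued identity with an arbitrary $\phi \in H^1$ recovers \eqref{eqn problem considered relaxed weak form with Ito integral}. Since in Definition \ref{definition weak martingale solution relaxed equation} the test functions range over $L^4(\tilde{\Omega}:H^1)$, in particular every deterministic $\phi \in H^1$ is admissible; separability of $H^1$ together with continuity of the duality pairing will then let us upgrade the family of scalar identities to a single identity in $(H^1)^\p$.

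First I would treat the deterministic terms. Since $m \in L^2(0,T:H^2)$ a.s., we have $\Delta m \in L^2(0,T:L^2) \hookrightarrow L^2(0,T:(H^1)^\p)$. For $d = 1, 2$ the embedding $H^1 \hookrightarrow L^4$ holds, so $m(s) \in L^4$ and, combined with $\Delta m(s) \in L^2$, H\"older's inequality gives $m(s) \times \Delta m(s) \in L^{4/3} \hookrightarrow (H^1)^\p$; using $m \in L^\infty(0,T:H^1)$ and $\Delta m \in L^2(0,T:L^2)$ this shows $m \times \Delta m \in L^2(0,T:(H^1)^\p)$. The cubic term satisfies $| (1 + |m(s)|_{\mathbb{R}^3}^2) m(s) |_{L^{4/3}} \leq C(1 + |m(s)|_{L^4}^3)$, and since $m \in L^4(0,T:L^4)$ by Lemma \ref{lemma FG bounds m tilde}, this term lies in $L^{4/3}(0,T:(H^1)^\p)$. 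The Stratonovich correction obeys $| (m(s) \times h) \times h |_{L^2} \leq C(h)\, |m(s)|_{L^2}$, hence belongs to $L^\infty(0,T:L^2) \hookrightarrow L^\infty(0,T:(H^1)^\p)$.

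For the control term, Assumption \ref{assumption Main Assumption} together with the Cauchy--Schwarz inequality applied to the probability measure $q_s$ gives $| \int_{\mathbb{U}} L(m(s),u)\, q_s(du) |_{X} \leq C |m(s)|_{L^2} \int_{\mathbb{U}} \kappa(s,u)\, q_s(du) \leq C |m(s)|_{L^2} ( \int_{\mathbb{U}} \kappa^2(s,u)\, q_s(du) )^{1/2}$; integrating in $s$, using $m \in L^\infty(0,T:L^2)$ a.s., the bound $\mathbb{E} \int_0^T \int_{\mathbb{U}} \kappa^2(s,u)\, \lambda(du,ds) < \infty$, and $X \hookrightarrow L^2 \hookrightarrow (H^1)^\p$, shows that $s \mapsto \int_{\mathbb{U}} L(m(s),u)\, q_s(du)$ is in $L^1(0,T:(H^1)^\p)$ a.s. Finally the integrand $m(s) \times h + h$ of the stochastic term satisfies $\mathbb{E} \int_0^T |m(s) \times h + h|_{L^2}^2 \, ds < \infty$, so $t \mapsto \int_0^t (m(s) \times h + h)\, dW(s)$ is a well-defined $L^2$-valued, hence $(H^1)^\p$-valued, process with an a.s. continuous modification. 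All these terms are progressively measurable as $(H^1)^\p$-valued processes, being built from the progressively measurable process $m$ and the adapted Young measure.

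The step I expect to be the most delicate is the passage from ``for each fixed deterministic $\phi \in H^1$ the scalar identity \eqref{eqn problem considered relaxed weak form with Ito integral} holds for all $t$ outside a null set depending on $\phi$'' to ``the $(H^1)^\p$-valued identity holds for all $t \in [0,T]$ outside a single null set''. Here I would fix a countable dense set $\{\phi_k\}_{k \in \mathbb{N}} \subset H^1$; since by the previous two paragraphs every term is (a.s.) a continuous $(H^1)^\p$-valued function of $t$ --- the drift contributions being absolutely continuous Bochner integrals and the stochastic integral admitting an a.s. continuous modification --- the scalar identities for the $\phi_k$ hold simultaneously for all $t$ and all $k$ on a common $\mathbb{P}$-null set. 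Density of $\{\phi_k\}$ in $H^1$ and continuity of the pairing $(H^1)^\p \times H^1 \to \mathbb{R}$ then yield the identity in $(H^1)^\p$ for every $t$, which is the assertion of the corollary.
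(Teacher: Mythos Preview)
Your argument is correct and follows the same approach as the paper's: one checks that every integral on the right-hand side of \eqref{eqn problem considered relaxed} defines an $(H^1)^\p$-valued process, the paper writing out only the control-term estimate while you supply details for all terms. Your added final paragraph, upgrading the family of scalar identities \eqref{eqn problem considered relaxed weak form with Ito integral} to a single $(H^1)^\p$-valued equality via a countable dense set in $H^1$ and continuity in $t$, is a welcome piece of rigor that the paper leaves implicit.
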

		\begin{proof}[Proof of Corollary \ref{corollary equality in H1 prime}]
			The proof can be given by showing that each of the integrals on the right hand side of the equation \eqref{eqn problem considered relaxed} makes sense in the space $\l( H^1 \r)^\p$. We give estimates only for the control term. By Assumption \ref{assumption Main Assumption}, there exists a constant $C>0$ such that
			\begin{align}
				%\nonumber
				\l| \int_{0}^{t} \int_{\mathbb{U}} L(m(s) , v) \, q_s(dv) \, ds \r|_{\l(H^1\r)^\p}^2 
				%\leq & C \int_{0}^{t} \l[ \l| m(s) \r|_{L^2}^2 \int_{\mathbb{U}} \kappa^2(s,u) \, q_s(du) \r] \, ds \\
				\leq & C \l( \int_{0}^{t} \l| m(s) \r|_{L^2}^4 \, ds \r)^{\frac{1}{2}} \l( \int_{0}^{t} \int_{\mathbb{U}} \kappa^2(s,v) \, q_s(dv)  \, ds \r)^{\frac{1}{2}} < \infty.
			\end{align}
			This gives the required regularity for the control term, hence concluding the result.
		\end{proof}

		We now proceed to give a brief structure for the proof of Theorem \ref{theorem pathwise uniqueness of solution}.
		\begin{proof}[Proof of Theorem \ref{theorem pathwise uniqueness of solution}]
			For $i=1,2$, the solution $m_i$ satisfies the following equation.
			\begin{align}
				\nonumber m_i(t) = &  m_0 + \int_{0}^{t} \Delta m_i(s) + m_i(s) \times \Delta m_i(s) - \l( 1 + \l| m_i(s) \r|_{\mathbb{R}^3}^2 \r) m_i(s) \, ds \\
				\nonumber & + \int_{0}^{t} \int_{\mathbb{U}} \bigl[ L(m_1(s),v) + L(m_1(s),v) \bigr] \, q_s(dv) \, ds 
				+ \frac{1}{2} \int_{0}^{t} \bigl( m_i(s) \times h \bigr) \times h \, ds\\
				&+ \int_{0}^{t} \bigl( m_i(s) \times h + h \bigr) \, dW(s).
			\end{align}
			
			Let $m(t) = m_1(t) - m_2(t)$.
			Then the process $m$ satisfies the following equation.
			
			\begin{align}\label{eqn pathwise uniqueness equality for m}
				\nonumber m(t) = & \int_{0}^{t} \Delta m(s) \, ds + \int_{0}^{t} \bigl( m(s) \times \Delta m_1(s) - m_2(s) \times \Delta m_2(s) \bigr) \, ds \\
				\nonumber &  \int_{0}^{t} \bigl( m_1(s) \times \Delta m_1(s) - m_2(s) \times \Delta m(s) \bigr) \, ds \\
				\nonumber & - \int_{0}^{t} \l( \l( 1 + \l| m_1(s) \r|_{\mathbb{R}^3}^2 \r) m_1(s) - \l( 1 + \l| m_1(s) \r|_{\mathbb{R}^3}^2 \r) m_2(s) \r) \, ds \\
				\nonumber & - \int_{0}^{t} \l( \l( 1 + \l| m_1(s) \r|_{\mathbb{R}^3}^2 \r) m_2(s) - \l( 1 + \l| m_2(s) \r|_{\mathbb{R}^3}^2 \r) m_2(s) \r) \, ds \\
				\nonumber & +  \int_{0}^{t} \int_{\mathbb{U}} \bigl[  L(m_1(s),v) - L(m_2(s),v) \bigr] \, q_s(dv) \, ds \\
				& + \frac{1}{2} \int_{0}^{t} \bigl( m(s) \times h \bigr) \times h \, ds + \int_{0}^{t} \bigl( m(s) \times h \bigr) \, dW(s).
			\end{align}
			Applying the It\^o formula gives
			
			\begin{align}\label{eqn pathwise uniqueness equality for m Ito formula}
				\nonumber \frac{1}{2} \l| m(t) \r|_{L^2}^2 = & 
				- \int_{0}^{t} \l| \nabla m(s) \r|_{L^2}^2 \, ds 
				+ \int_{0}^{t} \l\langle m(s) \times \Delta m_1(s) , m(s) \r\rangle_{L^2} \, ds\\
				\nonumber &  - \int_{0}^{t} \l\langle m_2(s) \times \Delta m(s) , m(s) \r\rangle_{L^2} \, ds - \int_{0}^{t}  \l\langle  \l( 1 + \l| m_1(s) \r|_{\mathbb{R}^3}^2 \r) m(s)  , m(s) \r\rangle_{L^2}  \, ds \\
				\nonumber & - \int_{0}^{t}  \l\langle  \l(  \l| m_1(s) \r|_{\mathbb{R}^3}^2 -  \l| m_2(s) \r|_{\mathbb{R}^3}^2 \r) m_2(s)  , m(s) \r\rangle_{L^2}  \, ds \\
				\nonumber & +  \int_{0}^{t}  \l\langle \int_{\mathbb{U}} \bigl[ L(m_1(s),v) - L(m_2(s),v) \bigr] \, q_s(dv) , m(s) \r\rangle_{L^2}  \, ds \\
				\nonumber & + \frac{1}{2} \int_{0}^{t}  \l\langle \bigl( m(s) \times h \bigr) \times h , m(s) \r\rangle_{L^2}  \, ds \\
				\nonumber & + \frac{1}{2} \int_{0}^{t} \l| m(s) \times h \r|_{L^2}^2 \, ds + \int_{0}^{t}  \l\langle \bigl( m(s) \times h \bigr) , m(s) \r\rangle_{L^2}  \, dW(s) \\
				= & \sum_{i=1}^{9} I_{i}(t).
			\end{align}		
			
			We show estimates only for the sixth term $I_6$.
			%			 The rest follow from, for example, Section 2 in \cite{ZB+BG+Le_SLLBE}.		
			%\textbf{Calculations for $I_{6}(t)$:}\\		
			By Assumption \ref{assumption Main Assumption}, there exists a constant $C>0$ such that% for each $s\in[0,T]$.
			\begin{align}
				\nonumber \l| \l\langle \int_{\mathbb{U}} \bigl[ L(m_1(s),v) - L(m_2(s),v) \bigr] \, q_s(dv) , m(s) \r\rangle_{L^2} \r| \leq & \int_{\mathbb{U}} \bigl| L(m_1(s),v) - L(m_2(s),v) \bigr|_{L^2} \, q_s(dv) \l| m(s) \r|_{L^2} \\
				\leq & C \l| m(s) \r|_{L^2}^2 \int_{\mathbb{U}} \kappa(s,v) q_s(dv),\ s\in[0,T].
			\end{align}

			Let $\Phi_C:[0,T] \to \mathbb{R}$ be defined as follows, for some constant $C>0$.
			\begin{align}\label{eqn pathwise uniqueness definition of Phi c d 1}
				\Phi_C(s) = C \l[ 1 + \l|m_2(s)\r|_{L^{\infty}} \l( \l|m_1(s)\r|_{L^{\infty}} + \l|m_2(s)\r|_{L^{\infty}} \r) + \l| \nabla m_2(s) \r|_{L^2}^4 + \int_{\mathbb{U}} \kappa(s,v) q_s(dv) \r].
			\end{align}
			Note that the above function works for dimension $d = 1$. For $d = 2$, the calculations follow by doing a little change in the function $\Phi_c$, which we state below.
			\begin{align}\label{eqn pathwise uniqueness definition of Phi c d 2}
				\Phi_C(s) = C \l[ 1 + \l|m_2(s)\r|_{L^{\infty}} \l( \l|m_1(s)\r|_{L^{\infty}} + \l|m_2(s)\r|_{L^{\infty}} \r) + \l| \nabla m_2(s) \r|_{L^2}^2 \l|m(s)\r|_{H^2}^2  + \int_{\mathbb{U}} \kappa(s,v) q_s(dv)  \r].
			\end{align}
			
			By the assumptions on $m_1,m_2$ (see Theorem \ref{theorem existence of weak martingale solution}), along with Assumption \ref{assumption Main Assumption}, the function $\Phi_C$ is integrable for both $d = 1,2$.
			One can show that there exists a constant $C>0$ such that
			\begin{align}
				\l| m(t) \r|_{L^2}^2  +  \int_{0}^{t} \l| \nabla m(s) \r|_{L^2}^2 \, ds \leq \int_{0}^{t} \Phi_C(s) \l| m(s) \r|_{L^2}^2 \, ds.
			\end{align}
			
			\dela{Applying the Gronwall inequality gives the required inequality, thus concluding the proof.}
			An application of Gronwall's inequality completes the proof.

		\end{proof}

		\bibliographystyle{plain}
		\bibliography{References}
	\end{document}